\documentclass[10pt,a4paper,oneside,openany]{book}

\usepackage{latexsym}
\usepackage{amsfonts}
\usepackage{amssymb}
\usepackage{amsmath}
\usepackage{amsthm}
\usepackage{setspace} 
\usepackage{color}
\usepackage{hyperref}
\usepackage{pdfpages}
\usepackage{paralist}    
\usepackage{etoolbox}
\patchcmd{\thebibliography}{\chapter*}{\section*}{}{}




\setlength{\marginparwidth}{0cm}
\setlength{\evensidemargin}{0.712cm}
\setlength{\oddsidemargin}{0.712cm}


\newcommand{\R}{\mathbb{R}}		

\newcommand{\beq}{\begin{equation}}		
\newcommand{\eeq}{\end{equation}}			
\newcommand{\beqq}{\begin{equation*}}	
\newcommand{\eeqq}{\end{equation*}}		


\setlength{\baselineskip}{19pt}

\newcommand{\id}{1\hspace{-0,9ex}1}

\renewcommand{\P}{\mathbb{P}}

\newcommand{\B}{\mathfrak{B}} 
\newcommand{\F}{\mathcal{F}} 
\newcommand{\A}{\mathcal{A}}

\newcommand{\x}{\text{\scalebox{0.62}{$\mathbb{X}$}}}
\newcommand{\X}{\mathbb{X}}

\allowdisplaybreaks

\newtheorem{theorem}{Theorem}[section]
\newtheorem{lemma}[theorem]{Lemma}
\newtheorem{definition}[theorem]{Definition}
\newtheorem{remark}[theorem]{Remark}

\newtheorem{corollary}[theorem]{Corollary}

\newtheorem{proposition}[theorem]{Proposition}

\newtheorem{stepp}{\noindent\bf{Step}}

{\begin{stepp}\rm}{\rm \end{stepp}}




\onehalfspacing

\begin{document}
\pagestyle{headings} \thispagestyle{headings} \thispagestyle{empty}

\noindent\rule{15.812cm}{0.4pt}
\begin{center}
\textsc{Abstract Cauchy Problems in separable Banach Spaces driven by random Measures: Existence \& Uniqueness }
\end{center}
\begin{center}
	by
\end{center} 
\begin{center}
		\textsc{Alexander Nerlich\footnote{Affiliation: Ulm University}\footnote{Affiliation's address: 89081 Ulm, Helmholtzstr. 18, Germany}\footnote{Author's E-Mail: alexander.nerlich@uni-ulm.de}}
\end{center}
\noindent\rule{15.812cm}{0.4pt}
\vspace{0.4cm}
\pagestyle{myheadings} 
\begin{center} 
	{\large ABSTRACT} 
\end{center}
The purpose of this paper is to study stochastic evolution inclusions of the form
\begin{align*}
 \eta(t,z) N_{\Theta}(dt \otimes z)\in dX(t)+\A X(t)dt,
\end{align*}
where $\A$ is a multi-valued operator acting on a separable Banach space and $N_{\Theta}$ is the counting measure induced by a point process $\Theta$. Firstly, we will set up the concepts of strong and mild solutions; then we will derive existence as well as uniqueness criteria for these kinds of solutions and give a representation formula for the solutions.\\
The results will be formulated by means of nonlinear semigroup theory and except for separability, no assumptions on the underlying Banach space are required.\\
\textbf{Mathematical Subject Classification (2010).} 47J35, 60H15, 35A01, 35A02\\
\textbf{Keywords.} Nonlinear evolution equation, Stochastic differential inclusion, Pure jump noise, Existence and uniqueness, Weighted $p$-Laplacian evolution equation

\section{Introduction}

Nonlinear SPDEs as well as nonlinear PDEs are both vibrant areas of research. Moreover, the theory of nonlinear semigroups and m-accretive operators is a powerful tool to establish the existence of unique solutions for many nonlinear PDEs; including the weighted $p$-Laplacian evolution equation (cf. \cite{mazon}), which serves as a model example throughout this paper.\\ 
Probably one of the most celebrated results regarding nonlinear semigroup theory reads as follows: If $(V,||\cdot||_{V})$ is an arbitrary (real) Banach space and $\A:D(\A)\rightarrow 2^{V}$ is an m-accretive, densely defined operator, then the initial value problem
\begin{align} 
\label{acp}
0 \in u^{\prime}(t)+\A u(t),~\text{a.e. } t \in (0,\infty),~u(0)=v,
\end{align}
has for any $v \in V$ a uniquely determined mild solution, denoted by $T_{\A}(\cdot)v:[0,\infty)\rightarrow V$, see  \cite[Prop. 3.7]{BenilanBook}.\\ 
The purpose of this paper is to derive a similar result for the stochastic evolution inclusion
\begin{align}
\tag{ACPRM}
\label{acprm}
\eta(t,z) N_{\Theta}(dt \otimes z)\in dX(t)+\A X(t)dt.
\end{align}
Surprisingly, it seems that there are very few results connecting abstract Cauchy problems governed by $m$-accretive, multi-valued operators and SPDEs on separable Banach spaces.\\

Before stating our results in more detail, let us give the reader an intuition on how to define what a solution of (\ref{acprm}) is:\\
To this end, let $(\Omega,\F,\P)$ be a complete probability space, $(Z,\mathcal{Z})$ a measurable space and let\linebreak $N_{\Theta }: (\mathfrak{B}((0,\infty))\otimes \mathcal{Z})\times \Omega \rightarrow \mathbb{N}_{0}\cup \{\infty\}$ be the counting measure induced by a finite and simple point process $\Theta$. Consequently, the noise term "$\eta(t,z) N_{\Theta}(d\tau \otimes z)$" is a pure jump noise; in particular we do not assume that the random measure $N_{\Theta}$ is compensated. In addition, let $(V,||\cdot||_{V})$ be a real, separable Banach space and let $\eta: (0,\infty)\times Z \times \Omega \rightarrow V$ be jointly measurable. Then for a multi-valued operator $\A:D(\A)\rightarrow 2^{V}$, one would at first try to define a solution of (\ref{acprm}) as a process $X:[0,\infty)\times \Omega \rightarrow V$ which is sufficiently regular and fulfills
\begin{align*}
\int \limits_{(0,t]\times Z}\eta(\tau,z)N_{\Theta}(d\tau\otimes z) \in X(t)-x+\int \limits_{0}\limits^{t}\A X(\tau)d\tau,
\end{align*}
where $x:\Omega \rightarrow V$ is an initial, i.e. $X(0)=x$. The obvious issue is that $\A$ takes values in the power set of $V$. Consequently, one either has to somehow define the set-valued integral, or one has to "pick" for each $\tau$ and $\omega$ an element of $\A X(\tau,\omega)$ by some rule. We choose to do the latter. To define this rule, assume that $\A$ is m-accretive and densely defined and let $T_{\A}$ denote the semigroup associated to $\A$. Moreover, assume that $\A$ admits an infinitesimal generator $\A^{\circ}:V\rightarrow V$, that is
\begin{align} 
-\lim \limits_{h \searrow 0} \frac{T_{\mathcal{A}}(h)v-v}{h} =: \mathcal{A}^{\circ}v \in \A v,
\end{align}
for all $ v \in D(\mathcal{A})$ and $\A^{\circ}v=0$ for all $v \in V \setminus D(\A)$. (In the nonlinear case, the existence of that limit is an assumption and not necessarily fulfilled.)\\
Consequently, we have found a rule and would like to define a solution as a process fulfilling
\begin{align*}
\int \limits_{(0,t]\times Z}\eta(\tau,z)N_{\Theta}(d\tau\otimes z) = X(t)-x+\int \limits_{0}\limits^{t}\A^{\circ} X(\tau)d\tau.
\end{align*}
The issue with this equation is that one needs $\A^{\circ}X \in L^{1}((0,t);V)$ for all $t>0$ with probability one. To get an existence result as applicable as possible, we will therefore formulate the preceding equation in a weak sense; more precisely, we will term strong solution, as a process $X$ fulfilling
\begin{align*}
\int \limits_{(0,t]\times Z}\langle\Psi,\eta(\tau,z)\rangle_{V}N_{\Theta}(d\tau\otimes z)=\langle\Psi,X(t)-x\rangle_{V}+\int \limits_{0}\limits^{t}\langle\Psi,\A^{\circ}X(\tau)\rangle_{V}d\tau,
\end{align*}
for all $\psi \in V^{\ast}$, where $V^{\prime}$ denotes the dual of $V$, $\langle\cdot,\cdot\rangle_{V}$ the duality between $V$ and $V^{\prime}$ and $V^{\ast}\subseteq V^{\prime}$ is a set which separates points. Of course, the process $X$ also has to fulfill some regularity assumptions, which mainly serve to make sure the uniqueness of solutions.\\
In addition, we will introduce a "mild solution of (\ref{acprm})", as a process which can be approximated in some sense by strong solutions.\\

Having done so, we shall see that (\ref{acprm}) has for any $\F$-$\B(V)$-measurable initial $x:\Omega \rightarrow V$ a unique mild solution, if: $\A$ is densely defined, m-accretive, domain invariant, admits an infinitesimal generator and if there is a dense subset $\mathcal{V}\subseteq V$, which is invariant w.r.t. $T_{\A}$ and such that $\langle\Psi, \A^{\circ}T_{\A}(\cdot)v\rangle_{V}\in L^{1}(0,t)$ for all $t>0$, $v \in \mathcal{V}$ and $\Psi \in V^{\ast}$. Particularly, this result only requires that $\eta$ and $x$ are measurable. Moreover, it will be demonstrated that mild solutions depend Lipschitz continuously on the initial $x$ and the drift $\eta$. Furthermore, if $x \in \mathcal{V}$ and $\eta(t,z) \in \mathcal{V}$ for all $t>0$, $z \in Z$ almost surely, then the mild solution is even a strong one. Along the way, a representation formula for mild (and therefore also for strong) solutions is derived.\\
In addition, these results will be exemplified by the weighted $p$-Laplacian evolution Equation with Neumann boundary conditions acting on an $L^{1}$-space.\\

The main advantages of employing the theory of $m$-accretive operators to solve (\ref{acprm}) is that this works on any separable Banach space. Moreover, the fairly lean assumptions on $\A$ allow to consider a large group of operators, such as the weighted $p$-Laplacian operator with a weight function only fulfilling boundedness, measurability and integrability assumptions, but no differentiability assumptions.\\ 

The investigation of (\ref{acprm}) will be continued in \cite{ich2}. There, we employ the representation formula derived here, to prove intriguing asymptotic results for the solutions, such as the strong law of large numbers and the central limit theorem.\\ 

That all of this works is highly owed to the fact that the noise term is a pure jump noise. However, it seems reasonable to conjecture that one can extend these results to compensated random measures by applying the theory of $m$-accretive operators for inhomogeneous Cauchy problems.\\

This paper is structured as follows: Section \ref{basicdef} is a collection of preliminary results and clarifies the notation we use. Section \ref{section_exun} is this paper's centerpiece; all general results regarding existence and uniqueness are proven there. And last but not least, the applicability of these results to the weighted $p$-Laplacian evolution equation is demonstrated in Section \ref{section_plaplace}.\\

\section{Preliminary Results and Notation}
\label{basicdef}
Throughout everything which follows $(\Omega,\F,\P)$ denotes a complete probability space and $(Z,\mathcal{Z})$ a measurable space.\\ 
Let us start with some stochastic preliminaries and proceed with the functional analytic ones.\\
We call a mapping $\theta:D(\theta)\rightarrow Z$, where $D(\theta)\subseteq (0,\infty)$ is countable, a point function. Moreover, $\pi(Z)$ denotes the set of all point functions mapping into $Z$ and we equip this space with the $\sigma$-algebra
\begin{align*}
\Pi(Z):=\sigma \big(\big\{\theta\in \pi(Z): \#\{t \in D(\theta): (t,\theta(t))\in U\}=k\big\};~k \in \mathbb{N}_{0},~U \in \mathfrak{B}((0,\infty))\otimes \mathcal{Z}\big),
\end{align*}
where $\B(T)$ always denotes the Borel $\sigma$-algebra, if $(T,\mathcal{T})$ is a topological space.\\
In addition, a mapping $\Theta :\Omega \rightarrow \pi(Z)$ which is $\F-\Pi(Z)$-measurable, is called a random point function, or point process. Moreover, for a point process $\Theta :\Omega \rightarrow \pi(Z)$, we introduce the mapping\linebreak $N_{\Theta }: (\mathfrak{B}((0,\infty))\otimes \mathcal{Z})\times \Omega \rightarrow \mathbb{N}_{0}\cup \{\infty\}$ by
\begin{align*}
N_{\Theta }(U,\omega):=\#\{t \in D(\Theta (\omega)): (t,\Theta (\omega)(t))\in U\},~\forall U \in \mathfrak{B}((0,\infty))\otimes \mathcal{Z},~ \omega \in \Omega
\end{align*}
and refer to it as the counting measure induced by $\Theta $.\\
It is plain to verify that the mapping $\mathfrak{B}((0,\infty))\otimes \mathcal{Z} \ni U \mapsto N_{\Theta }(U,\omega)$ is a measure for each $\omega \in \Omega$ and that $\Omega \ni \omega \mapsto N_{\Theta }(U,\omega)$ is a (extended) real-valued random variable for each $\omega \in \Omega$. (Hereby extended refers to the fact that this random variable might take the value $+\infty$.)\\
Note that, by definition, any point process $\Theta$ is simple, i.e. $N_{\Theta}(\{t\times z\},\omega)\leq 1$ for all $(t,z)\in (0,\infty)\times Z$ and $\omega \in \Omega$.\\
A point process $\Theta :\Omega \rightarrow \pi(Z)$, or the random measure $N_{\Theta }$ induced by $\Theta $, is called finite if \linebreak$\mathbb{E} N_{\Theta }((0,t]\times Z)<\infty$ for all $\forall t>0$. It is easy to infer that this implies $N_{\Theta }((0,t]\times Z)<\infty$ for all $t \in (0,\infty)$ with probability one. 

\begin{remark}\label{remark_hitting} Let $N_{\Theta}$ be the counting measure induced by a finite point process $\Theta:\Omega \rightarrow \Pi(Z)$. Then there is a $\P$-null-set $M \in \F$, such that $N_{\Theta }((0,t]\times Z,\omega)<\infty$ for all $t>0$ and $\omega \in \Omega \setminus M$. Hence, $D(\Theta (\omega))\cap (0,t]$ contains only finitely many elements for any $t>0$; which yields that $D(\Theta (\omega))$ is an isolated set for any $\omega \in \Omega \setminus M$. Therefore, we can find a sequence of mappings $(\alpha_{m})_{m \in \mathbb{N}}$, with $\alpha_{m}:\Omega \rightarrow (0,\infty)$, such that
	\begin{enumerate}
		\item $D(\Theta (\omega))=\{\alpha_{1}(\omega),\alpha_{2}(\omega),...\}$ for all $\omega \in \Omega \setminus M$ and
		\item $0 < \alpha_{m}(\omega)<\alpha_{m+1}(\omega)<\infty$ for all $m \in \mathbb{N}$ and $\omega \in \Omega \setminus M$.
	\end{enumerate}
	The sequence of mappings $(\alpha_{m})_{m \in \mathbb{N}}$ fulfilling these two assertions is obviously unique on $\Omega \setminus M$. 	We will refer to the (up to a $\P$-null-set) uniquely determined sequence fulfilling the assertions i)-ii), as the sequence of hitting times induced by $\Theta $.\\
	One instantly verifies that each $\alpha_{m}$ is $\F$-$\B((0,\infty))$-measurable and that $\lim \limits_{m \rightarrow \infty}\alpha_{m}=\infty$ almost surely. Moreover, with slightly more effort one verifies that the mapping defined by $\Omega \ni \omega \mapsto \Theta (\omega)(\alpha_{m}(\omega))$ is $\F-\mathcal{Z}$-measurable. 
\end{remark}

For a function $f:(0,\infty)\times Z \times \Omega \rightarrow \mathbb{R}$ which is $\mathfrak{B}((0,\infty))\otimes \mathcal{Z}\otimes \F-\mathfrak{B}(\mathbb{R})$-measurable and a finite point measure $N_{\Theta}$, we introduce
\begin{align}
\label{eq_integral}
\left( \int \limits_{(0,t]\times Z}f(\tau,z)N_{\Theta }(d\tau \otimes dz)\right)(\omega):=\int \limits_{(0,t]\times Z}f(\tau,z,\omega)N_{\Theta }(d\tau \otimes dz,\omega),~\forall t>0,~\P-\text{a.e. } \omega \in \Omega.
\end{align}
Hereby the right hand side is understood as a Lebesgue integral w.r.t. the measure $N(\cdot,\omega)$. Basic properties of this integral will be stated at this section's end, see Lemma \ref{lemma_intex}. Particularly, it is proven there that this integral is indeed finite for any measurable $f$ and finite point measure $N_{\Theta}$. 

\begin{remark} Throughout everything which follows, $\Theta:\Omega \rightarrow \pi(Z)$ denotes a finite point process and $N_{\Theta }: (\mathfrak{B}((0,\infty))\otimes \mathcal{Z})\times \Omega \rightarrow \mathbb{N}_{0}\cup \{\infty\}$ denotes the counting measure induced by $\Theta$. Moreover $(\alpha_{m})_{m \in \mathbb{N}}$ denotes the sequence of hitting times induced by $\Theta$. Finally, for notational convenience we also introduce $\alpha_{0}:\Omega \rightarrow \mathbb{R}$, with $\alpha_{0}:=0$.
\end{remark}

Now we will proceed with the functional analytic preliminaries: Let $(V,||\cdot||_{V})$ denote a real, separable Banach space with dual space $V^{\prime}$. Moreover, let $\langle\cdot,\cdot\rangle_{V}$ denote the duality between $V$ and $V^{\prime}$. As usually, a subset $V^{\ast}\subseteq V^{\prime}$ is said to separate points, if for all $v \in V$ we have that $\langle\Phi,v\rangle_{V}=0$ for all $\Phi \in V^{\ast}$ implies $v=0$.\\
In addition, let
\begin{align*}
W^{1,1}([a,b];V):=\{f:[a,b]\rightarrow V:~f\text{ is absolutely continuous and differentiable a.e.}\}.
\end{align*}
Moreover, for a measurable space $(K,\Sigma)$, we denote by $\mathcal{M}(K,\Sigma;V)$ the space of all functions $f:K \rightarrow V$ which are $\Sigma-\B(V)$-measurable; and we may simple write $\mathcal{M}(K;V)$, if it is clear which $\sigma$-algebra is meant; particularly:
\begin{align*}
\mathcal{M}((0,\infty)\times Z \times \Omega;V):=\mathcal{M}((0,\infty)\times Z \times \Omega, \B((0,\infty)\otimes \mathcal{Z} \otimes \F;V) \text{ and } \mathcal{M}(\Omega;V):=\mathcal{M}(\Omega,\F;V).
\end{align*}
Now we also need to briefly recall some definitions and results regarding nonlinear semigroup theory. The reader is referred to \cite{BenilanBook} for a comprehensive introduction to this topic. Moreover, \cite{acmbook} deals with existence, uniqueness and asymptotic results for many initial valued problems and this book's appendix contains a more concise introduction to nonlinear semigroups.\\
Let $\mathcal{A}:V\rightarrow 2^{V}$ be a multi-valued operator, then we introduce $D(\mathcal{A}):=\{v\in V: \mathcal{A}v\neq \emptyset\}$ and may write $\mathcal{A}:D(\A)\rightarrow 2^{V}$. Moreover, we call this operator single-valued if $\mathcal{A}v$ contains precisely one element for all $v\in D(\mathcal{A})$. In addition, $G(\A):=\{ (v,\hat{v}): \hat{v}\in \A v \}$ is the graph of $v$. We identify operators with its graph, and may simply write $(v,\hat{v})\in \A$, instead of $v \in D(\A)$ and $\hat{v} \in \A v$.\\
In addition, $\mathcal{A}:D(\mathcal{A})\rightarrow 2^{V}$ is called accretive, if $||v_{1}-v_{2}||_{V}\leq ||v_{1}-v_{2}+\alpha(\hat{v}_{1}-\hat{v}_{2})||_{V}$ for all $\alpha>0$ and $(v_{1},\hat{v}_{1}),~(v_{2},\hat{v}_{2})\in \A$; m-accretive, if it is accretive and $R(Id+\alpha\mathcal{A})=V$ for all $\alpha>0$; and densely defined, if $\overline{D(\mathcal{A})}=V$.\\
Using these simple definitions enables us to invoke the following well-known result:

\begin{remark}\label{remark_ms} Let $\mathcal{A}:D(\mathcal{A})\rightarrow 2^{V}$ be m-accretive and densely defined; moreover, let $v\in V$. Then the initial initial value problem
	\begin{align}
	\label{remark_mseq}
	0 \in u^{\prime}(t)+\mathcal{A}u(t),~\text{for a.e. }t\in (0,\infty),~u(0)=v,
	\end{align}
	has precisely one mild solution. The reader is referred to \cite[Proposition 3.7]{BenilanBook} for a proof and to \cite[Definition 1.3]{BenilanBook} for the definition of mild solution.\\
	For a given m-accretive and densely defined operator $\mathcal{A}:D(\mathcal{A})\rightarrow 2^{V}$, we denote for each $v \in V$ by $T_{\mathcal{A}}(\cdot)v:[0,\infty)\rightarrow V$ the uniquely determined mild solution of (\ref{remark_mseq}). The most important properties of $T_{\A}$ which are needed throughout this paper are as follows:
	\begin{enumerate}
		\item Contractivity: $||T_{\mathcal{A}}(t)v_{1}-T_{\mathcal{A}}(t)v_{2}||_{V} \leq ||v_{1}-v_{2}||_{V}$, for all $t\in [0,\infty)$ and $v_{1},~v_{2}\in V$. (cf. \cite[Theorem 3.10.i)]{BenilanBook})
		\item Continuity: $[0,\infty) \times V \ni (t,v) \mapsto T_{\mathcal{A}}(t)v$ is jointly continuous. (cf. \cite[Theorem 3.10.iii)]{BenilanBook}
		\item Semigroup Property: $T_{\mathcal{A}}(t_{1}+t_{2})v=T_{\mathcal{A}}(t_{2})T_{\mathcal{A}}(t_{1})v$ and $T_{\mathcal{A}}(0)v=v$ for all $t_{1},~t_{2}\in[0,\infty)$ and $v \in V$. (cf. \cite[Theorem 1.10]{BenilanBook})
	\end{enumerate}
The family of mappings $(T_{\A}(t))_{t \geq 0}$ will be called the semigroup associated to $\A$.
\end{remark}

\begin{definition} Let $\mathcal{A}:D(\mathcal{A})\rightarrow 2^{V}$ be m-accretive and densely defined. Moreover, let $\tilde{V}\subseteq V$. Then we say that $\tilde{V}$ is an invariant set w.r.t. $T_{\A}$, if $T_{\A}(t)\tilde{v} \in \tilde{V}$ for all $t \in [0,\infty)$ and $\tilde{v}\in\tilde{V}$.\\
	Moreover, $T_{\A}$ is called domain invariant, if $T_{\A}(t)v \in D(\A)$ for all $t \in (0,\infty)$ and $v \in V$.
\end{definition}

\begin{definition} Let $\mathcal{A}:D(\mathcal{A})\rightarrow 2^{V}$ be m-accretive and densely defined. Then we say that $T_{\A}$ admits an infinitesimal generator, if there is an operator $\mathcal{A}^{\circ}:V\rightarrow V$ such that
	\begin{align}
	\label{def_geneq}
	-\lim \limits_{h \searrow 0} \frac{T_{\mathcal{A}}(h)v-v}{h} = \mathcal{A}^{\circ}v \in \A v,
	\end{align}
	for all $ v \in D(\mathcal{A})$ and $\A^{\circ}v=0$ for all $v \in V \setminus D(\A)$. In this case, we call $\mathcal{A}^{\circ}$ the infinitesimal generator of $T_{\A}$.
\end{definition}

Using these definitions enables us to rigorously define the different notions of solutions of (\ref{acprm}).

\begin{definition}\label{def_solutionacprm} Let $(V,||\cdot||_{V})$ denote a separable Banach space, let $\eta \in \mathcal{M}((0,\infty)\times Z \times \Omega; V)$ and $x \in \mathcal{M}(\Omega;V)$. In addition, let $V^{\ast}\subseteq V^{\prime}$ be a set that separates points and let $\A:D(\A)\rightarrow 2^{V}$ be a densely defined, m-accretive operator which admits an infinitesimal generator $\A^{\circ}:V\rightarrow V$.\\
Then a $\B([0,\infty))\otimes \F$-$\B(V)$-measurable stochastic process $X:[0,\infty)\times \Omega \rightarrow V$ is called a strong solution of (\ref{acprm})$\{x,\eta,V^{\ast}\}$ if all of the following assertions hold for $\P$-a.e. $\omega \in \Omega$.
\begin{enumerate}
	\item $X(0,\omega)=x(\omega)$,
	\item the mapping $[0,\infty)\ni t \mapsto X(t,\omega)$ is c\`{a}dl\`{a}g,
	\item $X(t,\omega)\in D(\mathcal{A}),~\forall t \in (0,\infty)\setminus\{\alpha_{m}(\omega):m\in \mathbb{N}\}$,
	\item $\forall m \in \mathbb{N}_{0},~\forall [a,b]\subseteq (\alpha_{m}(\omega),\alpha_{m+1}(\omega)):~ X(\cdot,\omega)|_{[a,b]} \in W^{1,1}([a,b];V)$,
	\item $\langle \Psi,\A^{\circ}X(\cdot,\omega)\rangle_{V}\in L^{1}(0,t),~\forall t>0,~\Psi \in V^{\ast}$ and
	\item $\langle\Psi,X(t,\omega)-x(\omega)\rangle_{V}+\int \limits_{0}\limits^{t}\langle\Psi,\A^{\circ}X(\tau,\omega)\rangle_{V}d\tau=\int \limits_{(0,t]\times Z}\langle\Psi,\eta(\tau,z,\omega)\rangle_{V}N_{\Theta}(d\tau\otimes z,\omega),~\forall t>0,~\Psi \in V^{\ast}$.
\end{enumerate}
In addition, a $\B([0,\infty))\otimes \F$-$\B(V)$-measurable stochastic process $Y:[0,\infty)\times \Omega \rightarrow V$ is called a mild solution of (\ref{acprm})$\{x,\eta,V^{\ast}\}$, if it fulfills conditions i-iv) with probability one and if there are sequences $(x_{m})_{m\in \mathbb{N}}$, $(\eta_{m})_{m \in \mathbb{N}}$ and $(X_{m})_{m\in\mathbb{N}}$ such that
\begin{enumerate}
	\setcounter{enumi}{6}
	\item $x_{m}\in \mathcal{M}(\Omega;V)$ and $\eta_{m} \in \mathcal{M}((0,\infty)\times Z \times \Omega; V)$ for all $m \in \mathbb{N}$,
	\item $X_{m}:\Omega \times [0,\infty)\rightarrow V$ is a strong solution of (\ref{acprm})$\{x_{m},\eta_{m},V^{\ast}\}$ for all $m \in \mathbb{N}$,
	\item $\lim \limits_{m \rightarrow\infty} \sup \limits_{\tau \in [0,t]} ||X_{m}(\tau)-Y(\tau)||_{V}=0$ for all $t> 0$ almost surely and
	\item $\lim \limits_{m \rightarrow\infty}\int \limits_{(0,t]\times Z}||\eta_{m}(\tau,z)-\eta(\tau,z)||_{V}N_{\Theta}(d\tau\otimes z)=0$ for all $t> 0$ almost surely.
\end{enumerate} 
\end{definition}

\noindent As promised, this section now concludes with stating some basic properties of the integral defined in (\ref{eq_integral}).

\begin{lemma}\label{lemma_intex} Let $M \in \F$ be a $\P$-null-set such that
	\begin{align}
	\label{lemma_intexeq0}
	D(\Theta(\omega))=\{\alpha_{1}(\omega),~\alpha_{2}(\omega),...\},~0<\alpha_{m}(\omega)<\alpha_{m+1}(\omega),~\forall m \in \mathbb{N}\text{ and } \lim \limits_{m \rightarrow \infty}\alpha_{m}(\omega)=\infty,
	\end{align}
	for all $\omega \in \Omega \setminus M$. Moreover, introduce $f\in \mathcal{M}((0,\infty)\times Z \times \Omega;\mathbb{R})$.\\ 
	Then the mapping defined by $\Omega \ni \omega \mapsto f(\alpha_{m}(\omega),\Theta (\omega)(\alpha_{m}(\omega)),\omega):=f_{m}(\omega)$ is well defined on $\Omega \setminus M$ and $\F-\B(\R)$-measurable. In addition, the Lebesgue integral 
	\begin{align}
	\label{lemma_intexeq1}
	\int \limits_{(0,t]\times Z} f(\tau,z,\omega) N(d\tau \otimes z,\omega)
	\end{align}
	exists and is finite for all $t>0$ and $\omega \in \Omega \setminus M$. Moreover, the mapping defined by\linebreak $\Omega \ni \omega \mapsto \int \limits_{(0,t]\times Z} f(\tau,z,\omega) N(d\tau \otimes z,\omega)$ is well-defined on $\Omega \setminus M$, and it is $\F-\B(\R)$-measurable for all $t>0$. Finally, the assertion
	\begin{align}
	\label{lemma_intexeq2}
	\int \limits_{(0,t]\times Z} f(\tau,z,\omega) N(d\tau \otimes z,\omega) = \sum \limits_{m =1}\limits^{\infty} \sum \limits_{k =1 }\limits^{m}f_{k}(\omega)\id_{[\alpha_{m}(\omega),\alpha_{m+1}(\omega))}(t)
	\end{align}
	is valid for all $t>0$ and $\omega \in \Omega \setminus M$.
\end{lemma}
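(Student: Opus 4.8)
The plan is to observe that, for each $\omega \in \Omega \setminus M$, the restriction of $N_{\Theta}(\cdot,\omega)$ to $(0,t]\times Z$ is a \emph{finite} sum of Dirac measures placed at the points $(\alpha_m(\omega),\Theta(\omega)(\alpha_m(\omega)))$ with $\alpha_m(\omega)\leq t$; once this is in place every assertion reduces to an elementary computation. I would begin with the measurability of $f_m$. By Remark \ref{remark_hitting} each $\alpha_m$ is $\F$-$\B((0,\infty))$-measurable and each mapping $\omega\mapsto\Theta(\omega)(\alpha_m(\omega))$ is $\F$-$\mathcal{Z}$-measurable; hence $\omega\mapsto(\alpha_m(\omega),\Theta(\omega)(\alpha_m(\omega)),\omega)$ is measurable from $\Omega\setminus M$ (equipped with the trace of $\F$) into $\big((0,\infty)\times Z\times\Omega,\ \B((0,\infty))\otimes\mathcal{Z}\otimes\F\big)$, and composing with the jointly measurable $f$ shows that $f_m$ is well defined and $\F$-$\B(\R)$-measurable on $\Omega\setminus M$.

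Next, fix $\omega\in\Omega\setminus M$ and $t>0$. Because $\alpha_m(\omega)\uparrow\infty$ by (\ref{lemma_intexeq0}), the quantity $n:=\#\{m\in\N:\alpha_m(\omega)\leq t\}$ is finite and $D(\Theta(\omega))\cap(0,t]=\{\alpha_1(\omega),\dots,\alpha_n(\omega)\}$, a set of $n$ pairwise distinct points since the $\alpha_m(\omega)$ are strictly increasing. Plugging this into the definition of $N_{\Theta}$ yields, for every $U\in\B((0,\infty))\otimes\mathcal{Z}$ with $U\subseteq(0,t]\times Z$,
\begin{align*}
N_{\Theta}(U,\omega)=\#\{m\leq n:(\alpha_m(\omega),\Theta(\omega)(\alpha_m(\omega)))\in U\}=\sum_{m=1}^{n}\delta_{(\alpha_m(\omega),\Theta(\omega)(\alpha_m(\omega)))}(U),
\end{align*}
so $N_{\Theta}(\cdot,\omega)$ agrees on $(0,t]\times Z$ with the finitely supported measure $\sum_{m=1}^{n}\delta_{(\alpha_m(\omega),\Theta(\omega)(\alpha_m(\omega)))}$. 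Consequently the Lebesgue integral (\ref{lemma_intexeq1}) is the integral of a measurable function against a finitely supported measure, equals the finite real number $\sum_{m=1}^{n}f_m(\omega)$, and is in particular finite.

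It remains to bring $\sum_{m=1}^{n}f_m(\omega)$ into the form (\ref{lemma_intexeq2}) and to deduce measurability in $\omega$. Setting $\alpha_0:=0$, for each fixed $t>0$ there is exactly one $m_0\in\N_0$ with $t\in[\alpha_{m_0}(\omega),\alpha_{m_0+1}(\omega))$, and monotonicity of $(\alpha_m(\omega))_{m}$ forces $m_0=n$; hence in the double sum on the right of (\ref{lemma_intexeq2}) only the term $m=m_0$ is nonzero, and it equals $\sum_{k=1}^{m_0}f_k(\omega)=\sum_{m=1}^{n}f_m(\omega)$, which is exactly the value of the integral computed above. For the measurability in $\omega$: fixing $t$, the map $\omega\mapsto\id_{[\alpha_m(\omega),\alpha_{m+1}(\omega))}(t)$ is $\F$-measurable because $\alpha_m$ and $\alpha_{m+1}$ are, and each $f_k$ is $\F$-measurable by the first step; thus each summand on the right of (\ref{lemma_intexeq2}) is $\F$-measurable, the series has at most one nonzero term for every $\omega$, and therefore $\omega\mapsto\int_{(0,t]\times Z}f(\tau,z,\omega)N(d\tau\otimes z,\omega)$ is $\F$-$\B(\R)$-measurable.

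I expect the only mildly delicate point to be the identification of $N_{\Theta}(\cdot,\omega)|_{(0,t]\times Z}$ with a finite sum of Dirac measures, which rests on carefully unwinding the definition of $N_{\Theta}$ together with (\ref{lemma_intexeq0}); after that, existence and finiteness of the integral, the representation (\ref{lemma_intexeq2}), and the measurability statements are all routine bookkeeping. (It is worth noting in passing that joint measurability of $f$ guarantees that $f(\cdot,\cdot,\omega)$ is measurable for each $\omega$, so that (\ref{lemma_intexeq1}) is a genuine Lebesgue integral — although, the underlying measure being finitely supported, this is not strictly needed.)
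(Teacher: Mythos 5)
Your proposal is correct and follows essentially the same route as the paper: both rest on the observation that on $(0,t]\times Z$ the counting measure $N_{\Theta}(\cdot,\omega)$ is a finite sum of unit point masses at the points $(\alpha_{m}(\omega),\Theta(\omega)(\alpha_{m}(\omega)))$ (the paper phrases this as $N_{\Theta}$ vanishing on the open inter-jump intervals and charging each jump singleton with mass one), after which the integral collapses to the finite sum $\sum_{k=1}^{m_{0}}f_{k}(\omega)$ and measurability is read off from the right-hand side of (\ref{lemma_intexeq2}). The only cosmetic difference is that the paper additionally invokes completeness of $\F$ to pass from measurability on $\Omega\setminus M$ to $\F$-measurability, which your trace-$\sigma$-algebra formulation handles equivalently.
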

\begin{proof} Employing Remark \ref{remark_hitting} yields that each $f_{m}$ is the composition of measurable functions and consequently $\F-\mathfrak{B}(\R)$-measurable.\\ 
	Now note that it is plain that the mapping defined by $(0,t] \times Z \ni (\tau,z)\mapsto f(\tau,z,\omega)$ is $\mathfrak{B}((0,t])\otimes \mathcal{Z}-\B(\R)$ measurable for all $\omega \in \Omega$ and $t>0$. Consequently, it follows that the Lebesgue integral considered in (\ref{lemma_intexeq1}) is well defined and finite, if
	\begin{align}
	\label{lemma_intexproof1}
	\int \limits_{(0,t]\times Z} |f(\tau,z,\omega)| N(d\tau \otimes z,\omega)<\infty,~\forall t>0,~\omega \in \Omega \setminus M.
	\end{align}
	To this end, note that
	\begin{align}
	\label{lemma_intexproof2}
	N_{\Theta }\Big(\big(\alpha_{m}(\omega),\alpha_{m+1}(\omega)\big)\times Z,\omega\Big)=0,~\forall m \in \mathbb{N}_{0},~\omega \in \Omega \setminus M
	\end{align}
	as well as
	\begin{align}
	\label{lemma_intexproof3}
	N_{\Theta }\big(\{\alpha_{m}(\omega)\}\times Z,\omega\big)=N_{\Theta }\big(\{\alpha_{m}(\omega)\}\times \{\Theta (\omega)(\alpha_{m}(\omega))\},\omega\big)=1,~\forall m \in \mathbb{N},~\omega \in \Omega \setminus M.
	\end{align}
	Moreover, for a given $t>0$ and $\omega \in \Omega \setminus M$ there is an $m \in \mathbb{N}$, such that $t< \alpha_{k}(\omega)$ for all $k \in \mathbb{N}\setminus\{1,...,m\}$. This combined with the preceding two equalities clearly yields (\ref{lemma_intexproof1}).\\ 
	Moreover, note that the right-hand-side of (\ref{lemma_intexeq2}) defines an $\F$-$\B(R)$-measurable mapping. Consequently, as $\F$ is complete, the claim follows as soon as (\ref{lemma_intexeq2}) is proven. This is easily deduced from (\ref{lemma_intexproof2}) and (\ref{lemma_intexproof3}), since these two equations yield
	\begin{enumerate}
		\item $	\int \limits_{(0,\alpha_{m}(\omega)]\times Z}f(\tau,z,\omega)N_{\Theta }(d\tau \otimes dz,\omega)= \sum \limits_{k =1 }\limits^{m} f_{k}(\omega)$ for all $m \in \mathbb{N}$,
		\item $\int \limits_{(0,t]\times Z}f(\tau,z,\omega)N_{\Theta }(d\tau \otimes dz,\omega)\id_{[0,\alpha_{1}(\omega))}(t)=0$ for all $t>0$ and
		\item $\int \limits_{(0,t]\times Z}f(\tau,z,\omega)N_{\Theta }(d\tau \otimes dz,\omega)\id_{[\alpha_{m}(\omega),\alpha_{m+1}(\omega))}(t)=\sum \limits_{k =1 }\limits^{m} f_{k}(\omega)\id_{[\alpha_{m}(\omega),\alpha_{m+1}(\omega))}(t)$, for all $m \in \mathbb{N}$ and $t>0$,
	\end{enumerate}
	for all $\omega \in \Omega \setminus M$.
\end{proof}

Similar versions of the preceding result can be found in the literature. For example a similar result (for the case that $N_{\Theta }$ is a Poisson random measure) can be found in \cite[Corollary 3.4]{knoche}, nevertheless we were unable to find it stated precisely as above anywhere in the literature.

\section{Abstract Cauchy problems in separable Banach spaces driven by random Measures: Existence and Uniqueness}
\label{section_exun}
Now we will turn to the main objective of this paper, namely: When is there a unique (mild or strong) solution of (\ref{acprm}). \\

Throughout this section $(V,||\cdot||_{V})$ denotes a real, separable Banach space with dual space $V^{\prime}$. In addition,  $\A:D(\A)\rightarrow 2^{V}$ is a densely defined, domain invariant, m-accretive operator which admits an infinitesimal generator $\A^{\circ}:V\rightarrow V$. Finally $(T_{\A}(t))_{t \geq 0}$ denotes the semigroup associated to $\A$, see Remark \ref{remark_ms}.\\ 
At first we are going to tackle the problem of uniqueness of solutions of (\ref{acprm}), afterwards we will derive the existence.

\begin{lemma}\label{lemmma_solequivalence} Let $V^{\ast} \subseteq V^{\prime}$ be a set that separates points, $\eta \in \mathcal{M}((0,\infty)\times Z \times \Omega;V)$, $x\in \mathcal{M}(\Omega;V)$ and introduce $\eta_{k}(\omega):=\eta(\alpha_{k}(\omega),\Theta(\omega)(\alpha_{k}(\omega)),\omega)$ for all $k \in \mathbb{N}$ and $\P$-a.e. $\omega \in \Omega$. Then $\eta_{k} \in \mathcal{M}(\Omega;V)$ for all $k \in \mathbb{N}$ and a $\B([0,\infty))\otimes \F$-$\B(V)$-measurable stochastic process $X:[0,\infty)\times\Omega\rightarrow V$ is a strong solution of (\ref{acprm})$\{x,\eta,V^{\ast}\}$ if and only if it fulfills \ref{def_solutionacprm}.i-v) and 
\begin{align}
\label{lemma_solrepreq}
	 \langle\Psi,X(t)-x\rangle_{V}+\int \limits_{0}\limits^{t}\langle\Psi,\A^{\circ}X(\tau)\rangle_{V}d\tau=\sum \limits_{m =1}\limits^{\infty} \sum \limits_{k =1 }\limits^{m}\langle\Psi,\eta_{k}\rangle_{V}\id_{[\alpha_{m},\alpha_{m+1})}(t),~\forall t>0,~\Psi \in V^{\ast}.
\end{align}
almost surely.
\end{lemma}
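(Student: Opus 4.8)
The plan is to deduce both assertions from Lemma~\ref{lemma_intex}: once, applied coordinatewise, for the measurability of $\eta_k$, and once, applied to the scalarized drift $\langle\Psi,\eta(\cdot)\rangle_V$, for the series representation of the stochastic integral that appears in Definition~\ref{def_solutionacprm}.vi).

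First I would check $\eta_k\in\mathcal{M}(\Omega;V)$. Let $M\in\F$ be the $\P$-null set of Remark~\ref{remark_hitting}. On $\Omega\setminus M$ each $\alpha_k$ is $\F$-$\B((0,\infty))$-measurable and $\omega\mapsto\Theta(\omega)(\alpha_k(\omega))$ is $\F$-$\mathcal{Z}$-measurable, hence the map $\omega\mapsto(\alpha_k(\omega),\Theta(\omega)(\alpha_k(\omega)),\omega)$ is measurable from $\Omega\setminus M$ into $\big((0,\infty)\times Z\times\Omega,\,\B((0,\infty))\otimes\mathcal{Z}\otimes\F\big)$, since a map into a product of measurable spaces is measurable as soon as each coordinate is. Composing with $\eta$, which is $\B((0,\infty))\otimes\mathcal{Z}\otimes\F$-$\B(V)$-measurable, shows that $\eta_k$ is $\F$-$\B(V)$-measurable on $\Omega\setminus M$; extending $\eta_k$ by $0$ on $M$ and invoking completeness of $\F$ yields $\eta_k\in\mathcal{M}(\Omega;V)$.

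For the equivalence I would note that conditions i)--v) are word-for-word the same in the definition of a strong solution and in the claimed characterization, so only Definition~\ref{def_solutionacprm}.vi) versus \eqref{lemma_solrepreq} must be matched. Fixing $\Psi\in V^\ast$ and setting $f_\Psi(\tau,z,\omega):=\langle\Psi,\eta(\tau,z,\omega)\rangle_V$, continuity of $\langle\Psi,\cdot\rangle_V$ on $V$ gives $f_\Psi\in\mathcal{M}((0,\infty)\times Z\times\Omega;\R)$, so Lemma~\ref{lemma_intex} and its identity \eqref{lemma_intexeq2} turn $\int_{(0,t]\times Z}\langle\Psi,\eta(\tau,z)\rangle_V\,N_\Theta(d\tau\otimes z)$ into $\sum_{m\geq1}\sum_{k=1}^m(f_\Psi)_k\,\id_{[\alpha_m,\alpha_{m+1})}(t)$; and $(f_\Psi)_k(\omega)=\langle\Psi,\eta(\alpha_k(\omega),\Theta(\omega)(\alpha_k(\omega)),\omega)\rangle_V=\langle\Psi,\eta_k(\omega)\rangle_V$ straight from the definition of $\eta_k$. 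Substituting this identity turns vi) into \eqref{lemma_solrepreq} and back, so for every $\omega$ outside the pertinent null set at which i)--v) hold, vi) holds for all $t>0,\,\Psi\in V^\ast$ if and only if \eqref{lemma_solrepreq} does, which is the asserted equivalence. The one point requiring care --- and, I expect, the only real obstacle --- is the null-set bookkeeping, since $V^\ast$ may be uncountable: it is essential that the exceptional set $M$ in Lemma~\ref{lemma_intex} depends only on the hitting-time structure \eqref{lemma_intexeq0} of $\Theta$, not on the integrand, so that \eqref{lemma_intexeq2} holds on that single $M^c$ simultaneously for all $f_\Psi$ and no $\Psi$-dependent enlargement of the null set is needed.
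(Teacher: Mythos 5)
Your proposal is correct and follows essentially the same route as the paper: measurability of $\eta_k$ via composition of the measurable maps from Remark \ref{remark_hitting} with the jointly measurable $\eta$, and the equivalence of Definition \ref{def_solutionacprm}.vi) with \eqref{lemma_solrepreq} by applying the series identity \eqref{lemma_intexeq2} of Lemma \ref{lemma_intex} to the scalarized integrand $\langle\Psi,\eta(\cdot,\cdot,\cdot)\rangle_{V}$. Your remark that the exceptional set in Lemma \ref{lemma_intex} depends only on the hitting-time structure of $\Theta$, so that no $\Psi$-dependent enlargement is needed even for uncountable $V^{\ast}$, is exactly the point the paper's proof relies on when it asserts a single null set $M$ working for all $\Psi\in V^{\ast}$.
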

\begin{proof} Firstly, appealing to Remark \ref{remark_hitting} yields that each $\eta_{k}$ is, up to a $\P$-null-set, well-defined and that $\eta_{k}$ is the composition of measurable functions and consequently $\F$-$\B(V)$-measurable.\\
Lemma \ref{lemma_intex} yields that there is a $\P$-null-set $M \in \F$ such that for all $\Psi \in V^{\ast}$, we have
\begin{align*}
	\int \limits_{(0,t]\times Z} \langle\Psi,\eta(\tau,z,\omega)\rangle_{V} N(d\tau \otimes z,\omega) = \sum \limits_{m =1}\limits^{\infty} \sum \limits_{k =1 }\limits^{m}\langle\Psi,\eta_{k}(\omega)\rangle_{V}\id_{[\alpha_{m}(\omega),\alpha_{m+1}(\omega))},~\forall t>0,~\omega \in \Omega \setminus M
\end{align*}
Consequently, we get that \ref{def_solutionacprm}.vi) holds almost surely if and only if (\ref{lemma_solrepreq}) does, which concludes the proof. 
\end{proof}

\begin{proposition}\label{prop_unique} Let $V^{\ast} \subseteq V^{\prime}$ be a set that separates points, $\eta_{1},~\eta_{2}\in \mathcal{M}((0,\infty)\times Z \times \Omega;V)$ and $x_{1},~x_{2}\in \mathcal{M}(\Omega;V)$. Moreover, assume $X_{i}:[0,\infty)\times \Omega\rightarrow V$ is a strong solution of (\ref{acprm})$\{x_{i},\eta_{i},V^{\ast}\}$ for $i=1,2$. Then we have
\begin{align}
\label{prop_uniqueeq}
||X_{1}(t)-X_{2}(t)||_{V}\leq ||x_{1}-x_{2}||_{V}+\int \limits_{(0,t]\times Z} ||\eta_{1}(\tau,z)-\eta_{2}(\tau,z)||_{V} N_{\Theta}(d\tau\otimes z),~\forall t \geq 0,
\end{align}
almost surely. 
\end{proposition}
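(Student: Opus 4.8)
The strategy is to work pathwise on $\Omega \setminus M$, where $M$ is a $\P$-null set on which the conclusions of Remark \ref{remark_hitting} and Lemma \ref{lemma_intex} hold, and to reduce the inclusion on $[0,\infty)$ to a sequence of inhomogeneous Cauchy problems on the jump-free intervals $(\alpha_m(\omega),\alpha_{m+1}(\omega))$. First I would fix such an $\omega$ and invoke Lemma \ref{lemmma_solequivalence} to rewrite condition \ref{def_solutionacprm}.vi) for each $X_i$ in the summed form (\ref{lemma_solrepreq}); subtracting the two identities, for every $\Psi \in V^\ast$ and every $t>0$ one obtains
\begin{align*}
\langle\Psi, (X_1(t)-X_2(t)) - (x_1-x_2)\rangle_V + \int_0^t \langle\Psi, \A^\circ X_1(\tau)-\A^\circ X_2(\tau)\rangle_V\,d\tau = \sum_{m=1}^\infty\sum_{k=1}^m \langle\Psi, \eta_{1,k}-\eta_{2,k}\rangle_V \id_{[\alpha_m,\alpha_{m+1})}(t).
\end{align*}
On any interval $[a,b]\subseteq(\alpha_m(\omega),\alpha_{m+1}(\omega))$ the right-hand side is constant in $t$, and by \ref{def_solutionacprm}.iv) both $X_i(\cdot,\omega)$ lie in $W^{1,1}([a,b];V)$; so the difference $D := X_1 - X_2$ lies in $W^{1,1}([a,b];V)$ and, since $V^\ast$ separates points, differentiating the displayed identity in $t$ gives $D'(t) = -(\A^\circ X_1(t) - \A^\circ X_2(t))$ for a.e.\ $t\in(\alpha_m,\alpha_{m+1})$, with $X_i(t)\in D(\A)$ and $\A^\circ X_i(t)\in \A X_i(t)$ there by \ref{def_solutionacprm}.iii) and the definition of the infinitesimal generator.

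Next comes the core contraction estimate on a single jump-free interval. Using that $\A$ is accretive and that $D$ is absolutely continuous with $D' = -(\A^\circ X_1 - \A^\circ X_2)$ a.e., I would show that $t\mapsto \|D(t)\|_V$ is non-increasing on $(\alpha_m,\alpha_{m+1})$. The standard device: for $h>0$ small, accretivity gives $\|D(t)\|_V \le \|D(t) + h(\A^\circ X_1(t)-\A^\circ X_2(t))\|_V = \|D(t) - h D'(t)\|_V$ at a.e.\ $t$, hence $\|D(t)\|_V - \|D(t-h)\|_V \le \|D(t)-hD'(t)\|_V - \|D(t-h)\|_V$; dividing by $h$, using $\|D(t)-hD'(t) - D(t-h)\|_V = o(h)$ at Lebesgue points of $D'$, and the reverse triangle inequality, one gets that the (a.e.-defined) derivative of $t\mapsto\|D(t)\|_V$ is $\le 0$, so $\|D(\cdot)\|_V$ is non-increasing on each open jump-free interval. (This is exactly the inequality behind Remark \ref{remark_ms}.i); I would either cite it in the form needed or reproduce this short argument. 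A subtlety to handle carefully: $t\mapsto\|D(t)\|_V$ is absolutely continuous on $[a,b]$ because $D$ is, so "derivative $\le 0$ a.e." does imply monotonicity.)

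Then I would propagate the estimate across jumps. By càdlàg regularity \ref{def_solutionacprm}.ii), letting $t\uparrow\alpha_{m+1}$ and $t\downarrow\alpha_m$ in the monotonicity on $(\alpha_m,\alpha_{m+1})$ yields $\|D(\alpha_{m+1}^-)\|_V \le \|D(\alpha_m^+)\|_V = \|D(\alpha_m)\|_V$ (right-continuity at $\alpha_m$). At the jump time $\alpha_{m+1}$ itself, evaluating the summed identity (\ref{lemma_solrepreq}) just below and at $\alpha_{m+1}$ and using that $V^\ast$ separates points gives $D(\alpha_{m+1}) = D(\alpha_{m+1}^-) + (\eta_{1,m+1}-\eta_{2,m+1})$, so $\|D(\alpha_{m+1})\|_V \le \|D(\alpha_{m+1}^-)\|_V + \|\eta_{1,m+1}-\eta_{2,m+1}\|_V$. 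Starting from $D(0)=x_1-x_2$ and iterating over $m=0,1,2,\dots$, for any $t\ge 0$ with $\alpha_m(\omega)\le t<\alpha_{m+1}(\omega)$ one obtains
\begin{align*}
\|X_1(t)-X_2(t)\|_V \le \|x_1-x_2\|_V + \sum_{k=1}^m \|\eta_{1,k}-\eta_{2,k}\|_V,
\end{align*}
and by Lemma \ref{lemma_intex} (identity (\ref{lemma_intexeq2}) applied to $\|\eta_1-\eta_2\|_V$) the sum equals $\int_{(0,t]\times Z}\|\eta_1(\tau,z)-\eta_2(\tau,z)\|_V\,N_\Theta(d\tau\otimes z)$, which is (\ref{prop_uniqueeq}). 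Since this holds for all $\omega\in\Omega\setminus M$ and all $t\ge 0$, the proof is complete.

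The main obstacle I anticipate is the pathwise differentiation/monotonicity step: one must be careful that the weak identity in \ref{def_solutionacprm}.vi), which only tests against $\Psi\in V^\ast$, genuinely upgrades to the strong ODE $D' = -(\A^\circ X_1-\A^\circ X_2)$ in $V$ on each jump-free interval (this uses \ref{def_solutionacprm}.iv) to know $D$ is already differentiable a.e.\ in the Bochner sense, plus the separating property to identify the derivative), and that the accretivity inequality is applied at a.e.\ point without circularity. Everything else — the reduction via Lemma \ref{lemmma_solequivalence}, the jump bookkeeping, and the final identification of the sum with the integral via Lemma \ref{lemma_intex} — is routine.
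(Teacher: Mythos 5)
Your proposal is correct in substance, but it replaces the paper's core step by a genuinely different (and more elementary) argument. Both proofs share the same skeleton: pathwise reduction via Lemma \ref{lemmma_solequivalence}, identification of the jump sizes at the hitting times through the c\`adl\`ag property and the separating set, inductive propagation across jumps, and the final identification of the sum with the $N_{\Theta}$-integral via Lemma \ref{lemma_intex}. Where you diverge is on the jump-free intervals: the paper upgrades the weak identity to $0=F_i^{\prime}+\A^{\circ}F_i$ a.e., concludes that each shifted path $F_i$ is a strong, hence mild, solution of the homogeneous problem, invokes uniqueness of mild solutions to identify $F_i(t)=T_{\A}(t)u_i$, and then applies the contractivity of $T_{\A}$ (Remark \ref{remark_ms}.i). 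You instead apply accretivity directly to the difference $D=X_1-X_2$, using $D^{\prime}=-(\A^{\circ}X_1-\A^{\circ}X_2)$ a.e.\ and the inequality $\lVert D(t)\rVert_V\leq\lVert D(t)-hD^{\prime}(t)\rVert_V$ to show that $t\mapsto\lVert D(t)\rVert_V$ has nonpositive a.e.\ derivative and, being absolutely continuous, is non-increasing. Your route avoids the detour through the existence/uniqueness theory for mild solutions and the identification with the semigroup (it only uses the bare accretivity of $\A$), at the cost of carrying out the differential-inequality argument by hand; the paper's route outsources exactly that Gronwall-type work to the cited semigroup theory. The bookkeeping at jumps and the limit $\varepsilon\searrow 0$ are essentially identical in both proofs.

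One point you should make explicit: in upgrading the weak identity to the strong ODE for $D$, the exceptional Lebesgue null set on which $\tfrac{d}{dt}\int_0^t\langle\Psi,\A^{\circ}X_1(\tau)-\A^{\circ}X_2(\tau)\rangle_V d\tau$ fails to equal the integrand depends on $\Psi$, and $V^{\ast}$ may be uncountable. As in the paper, you must pass to a countable subset $V^{\ast}_c\subseteq V^{\ast}$ that still separates points (which exists by separability of $V$, see \cite{pointsep}), take the union of the corresponding null sets, and only then use the separating property pointwise to identify $D^{\prime}(t)=-(\A^{\circ}X_1(t)-\A^{\circ}X_2(t))$. With that device inserted, your argument is complete; note also that the paper needs this very same step, so it does not tip the balance between the two approaches.
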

\begin{proof} Firstly, by Lemma \ref{lemmma_solequivalence} and Remark \ref{remark_hitting} we get that there is a $\P$-null-set $M  \in \F$ such that
\begin{align}
\label{prop_uniqueproofeq0}
D(\Theta(\omega))=\{\alpha_{1}(\omega),~\alpha_{2}(\omega),...\},~0<\alpha_{m}(\omega)<\alpha_{m+1}(\omega),~\forall m \in \mathbb{N}\text{ and } \lim \limits_{m \rightarrow \infty}\alpha_{m}(\omega)=\infty,
\end{align}
and
\begin{enumerate}
	\item $X_{i}(0,\omega)=x_{i}(\omega)$,
	\item the mapping $[0,\infty)\ni t \mapsto X_{i}(t,\omega)$ is c\`{a}dl\`{a}g, 
	\item $X_{i}(t,\omega)\in D(\mathcal{A}),~\forall t \in (0,\infty)\setminus\{\alpha_{m}(\omega):m\in \mathbb{N}\}$,
	\item $\forall m \in \mathbb{N}_{0},~\forall [a,b]\subseteq (\alpha_{m}(\omega),\alpha_{m+1}(\omega)):~ X_{i}(\cdot,\omega)|_{[a,b]} \in W^{1,1}([a,b];V)$, 
	\item $\langle \Psi,\A^{\circ}X_{i}(\cdot,\omega)\rangle_{V}\in L^{1}(0,t),~\forall t>0,~\Psi \in V^{\ast}$ and
	\item $\langle\Psi,X_{i}(t,\omega)-x_{i}(\omega)\rangle_{V}+\int \limits_{0}\limits^{t}\langle\Psi,\A^{\circ}X_{i}(\tau,\omega)\rangle_{V}d\tau=\sum \limits_{m =1}\limits^{\infty} \sum \limits_{k =1 }\limits^{m}\langle\Psi,\eta_{i,k}(\omega)\rangle_{V}\id_{[\alpha_{m}(\omega),\alpha_{m+1}(\omega))}(t),~\forall t>0$,\linebreak$\Psi \in V^{\ast}$, where $\eta_{i,k}(\omega):=\eta_{i}(\alpha_{k}(\omega),\Theta(\omega)(\alpha_{k}(\omega)),\omega )$ for all $k \in \mathbb{N}$,
\end{enumerate}
for all $\omega \in \Omega \setminus M$ and $i=1,2$.\\
Moreover, Lemma \ref{lemma_intex} yields that it suffices to prove that
\begin{align}
\label{prop_uniqueproofeq1}
||X_{1}(t,\omega)-X_{2}(t,\omega)||_{V}\leq ||x_{1}(\omega)-x_{2}(\omega)||_{V}+\sum \limits_{m=1}\limits^{\infty}\sum \limits_{k =1}\limits^{m}||\eta_{1,k}(\omega)-\eta_{2,k}(\omega)||_{V}\id_{[\alpha_{m}(\omega),\alpha_{m+1}(\omega))}(t),
\end{align}
for all $t \geq 0$ and $\omega \in \Omega \setminus M$.\\
To this end, let $\omega \in \Omega \setminus M$ be arbitrary but fixed and introduce 
\begin{align*}
\hat{\alpha}_{0}:=0,~\hat{\alpha}_{m}:=\alpha_{m}(\omega),~\hat{\eta}_{i,m}:=\eta_{i,m}(\omega),~\hat{x}_{i}:=x_{i}(\omega)\text{ and }\hat{X}_{i}(t):=X_{i}(t,\omega),
\end{align*}
for all $t\geq 0$, $m \in \mathbb{N}$ and $i=1,2$.\\
Let us start tackling the task ahead of us, by proving that 
\begin{align}
\label{prop_uniqueproofeq2}
\lim \limits_{\varepsilon\searrow 0}\hat{X}_{i}(\hat{\alpha}_{\tilde{m}})-\hat{X}_{i}(\hat{\alpha}_{\tilde{m}}-\varepsilon)=\hat{\eta}_{i,\tilde{m}},~\forall \tilde{m} \in \mathbb{N}\text{ and }i=1,2.
\end{align}
in norm. Let $\tilde{m} \in \mathbb{N}$ and $i \in \{1,2\}$ be arbitrary but fixed and note that v) yields
\begin{align*}
\lim \limits_{\varepsilon\searrow 0} \int \limits_{0}\limits^{\hat{\alpha}_{\tilde{m}}-\varepsilon}\langle\Psi,\A^{\circ}\hat{X}_{i}(\tau)\rangle_{V}d\tau-\int \limits_{0}\limits^{\hat{\alpha}_{\tilde{m}}}\langle\Psi,\A^{\circ}\hat{X}_{i}(\tau)\rangle_{V}d\tau=0,~\forall \Psi \in V^{\ast}.
\end{align*}
Consequently, we get by invoking vi) that
\begin{align*}
\lim \limits_{\varepsilon\searrow 0}\langle\Psi, \hat{X}_{i}(\hat{\alpha}_{\tilde{m}})-\hat{X}_{i}(\hat{\alpha}_{\tilde{m}}-\varepsilon)\rangle_{V} = \sum \limits_{k =1 }\limits^{\tilde{m}}\langle\Psi,\hat{\eta}_{i,k}\rangle_{V}- \sum \limits_{k =1 }\limits^{\tilde{m}-1}\langle\Psi,\hat{\eta}_{i,k}\rangle_{V}=\langle\Psi,\hat{\eta}_{i,\tilde{m}}\rangle_{V},
\end{align*}
for all $\Psi \in V^{\ast}$. Moreover, ii) implies that there is a $u \in V$ such that
\begin{align}
\label{prop_uniqueproofeq3}
\lim \limits_{\varepsilon\searrow 0} ||\hat{X}_{i}(\hat{\alpha}_{\tilde{m}})-\hat{X}_{i}(\hat{\alpha}_{\tilde{m}}-\varepsilon) - u||_{V}=0.
\end{align}
Consequently, as convergence in norm implies weak convergence, we have
\begin{align*}
\langle\Psi,\hat{\eta}_{i,\tilde{m}}-u\rangle_{V}=\lim \limits_{\varepsilon\searrow 0}\langle\Psi, \hat{X}_{i}(\hat{\alpha}_{\tilde{m}})-\hat{X}_{i}(\hat{\alpha}_{\tilde{m}}-\varepsilon)\rangle_{V}-\langle\Psi, \hat{X}_{i}(\hat{\alpha}_{\tilde{m}})-\hat{X}_{i}(\hat{\alpha}_{\tilde{m}}-\varepsilon)\rangle_{V}=0,~\forall\Psi \in V^{\ast},
\end{align*}
which yields $\hat{\eta}_{i,\tilde{m}}=u$, since $V^{\ast}$ separates points. Consequently, (\ref{prop_uniqueproofeq3}) implies (\ref{prop_uniqueproofeq2}).\\

We will proceed by proving that
\begin{align}
\label{prop_uniqueproofeq4}
||\hat{X}_{1}(t)-\hat{X}_{2}(t)||_{V} \leq ||\hat{X}_{1}(\hat{\alpha}_{m})-\hat{X}_{2}(\hat{\alpha}_{m})||_{V},~\forall m \in \mathbb{N}_{0},~t \in [\hat{\alpha}_{m},\hat{\alpha}_{m+1}).
\end{align}
Proving (\ref{prop_uniqueproofeq4}) is divided into several intermediate steps and requires some notations. To this end, fix $m \in \mathbb{N}_{0}$, and introduce $\varepsilon \in (0,\hat{\alpha}_{m+1}-\hat{\alpha}_{m})$ arbitrary but fixed, $b_{\varepsilon}:=\hat{\alpha}_{m+1}-\hat{\alpha}_{m}-\varepsilon$, $F_{i}:[0,b_{\varepsilon}]\rightarrow V$ by $F_{i}:=\hat{X}_{i}(\cdot+\hat{\alpha}_{m})$ and $u_{i}:=\hat{X}_{i}(\hat{\alpha}_{m})$ for $i=1,2$.\\
Firstly, note that
\begin{align}
\label{prop_uniqueproofeq5}
F_{i}|_{[a,b]} \in W^{1,1}([a,b];V),~\forall [a,b] \subseteq (0,b_{\varepsilon}),~i \in \{1,2\},
\end{align}
since: For $[a,b] \subseteq (0,b_{\varepsilon})$ and $t \in [a,b]$ we have
\begin{align*}
\hat{\alpha}_{m}<\hat{\alpha}_{m}+a \leq \hat{\alpha}_{m} + t \leq \hat{\alpha}_{m} +b < \hat{\alpha}_{m+1}-\varepsilon,
\end{align*}
which yields by appealing to iv) that $\hat{X}_{i} \in W^{1,1}([\hat{\alpha}_{m}+a,\hat{\alpha}_{m}+b];V)$; and hence (\ref{prop_uniqueproofeq5}), by the definition of $F_{i}$.\\
Secondly, we will prove that
\begin{align}
\label{prop_uniqueproofeq6}
F_{i} \in C([0,b_{\varepsilon}];V),~i \in \{1,2\}.
\end{align}
Note that (\ref{prop_uniqueproofeq5}) already yields $F_{i} \in C((0,b_{\varepsilon});V)$ which then gives $F_{i} \in C([0,b_{\varepsilon});V)$, since $F_{i}$ inherits the right-continuity of $\hat{X}_{i}$. Consequently, (\ref{prop_uniqueproofeq6}) follows if $F_{i}$ is left-continuous in $b_{\varepsilon}$. As $\hat{X}_{i}$ is c\`{a}dl\`{a}g, we have that there is a $w_{i} \in V$ such that
\begin{align*}
\lim \limits_{h \searrow 0} F_{i}(b_{\varepsilon}-h)-F_{i}(b_{\varepsilon})=w_{i}.
\end{align*}
in norm. Moreover, note that $b_{\varepsilon}+\hat{\alpha}_{m}\in (\hat{\alpha}_{m},\hat{\alpha}_{m+1})$, which yields by invoking vi) that
\begin{align*}
\langle w_{i},\Psi\rangle_{V} = \lim \limits_{h \searrow 0} \int \limits_{0}\limits^{b_{\varepsilon}+\hat{\alpha}_{m}}\langle\Psi,\A^{\circ}X_{i}(\tau,\omega)\rangle_{V}d\tau-\int \limits_{0}\limits^{b_{\varepsilon}+\hat{\alpha}_{m}-h}\langle\Psi,\A^{\circ}X_{i}(\tau,\omega)\rangle_{V}d\tau = 0
\end{align*} 
for all $\Psi \in V^{\ast}$. As $V^{\ast}$ separates points, this is only possible if $w_{i}=0$, which establishes the desired left continuity and (\ref{prop_uniqueproofeq6}) follows.\\
The last intermediate step necessary to prove (\ref{prop_uniqueproofeq4}) is
\begin{align}
\label{prop_uniqueproofeq7}
0 \in F^{\prime}_{i}(t)+\A F_{i}(t),~\text{ a.e. } t \in (0,b_{\varepsilon}),~ F_{i}(0)=u_{i},~i=1,2.
\end{align}
To this end, note that (\ref{prop_uniqueproofeq5}) yields that there is for each $i \in \{1,2\}$ a Lebesgue null-set $M(F_{i})\subseteq(0,b_{\varepsilon})$ such that $F_{i}$ is differentiable (in norm) on $(0,b_{\varepsilon})\setminus M(F_{i})$.\\
Now let $V^{\ast}_{c}\subseteq V^{\ast}$ be a countable set which separates points; such a set exists due to \cite[Lemma 2.1 and Theorem 2.1]{pointsep}.\\ 
By virtue of the fundamental theorem of calculus for Lebesgue integrals, there is for each $\Psi \in V^{\ast}_{c}$ and $i \in \{1,2\}$ a Lebesgue null-set $M(\Psi,i)\subseteq (0,b_{\varepsilon})$ such that
\begin{align*}
\lim \limits_{h \rightarrow 0} \frac{1}{h} \left(\int \limits_{0}\limits^{t+\hat{\alpha}_{m}+h}\langle\Psi,\A^{\circ}\hat{X}_{i}(\tau)\rangle_{V}d\tau-\int \limits_{0}\limits^{t+\hat{\alpha}_{m}}\langle\Psi,\A^{\circ}\hat{X}_{i}(\tau)\rangle_{V}d\tau\right)=\langle\Psi,\A^{\circ}\hat{X}_{i}(t+\hat{\alpha}_{m})\rangle_{V}
\end{align*}
for all $t \in (0,b_{\varepsilon})\setminus M(\Psi,i)$, $i=1,2$.\\
Consequently, employing the previous equality, the differentiability a.e. of $F_{i}$ and vi) yields
\begin{eqnarray*}
	& & ~ 
\langle F_{i}^{\prime}(t)+\A^{\circ}F_{i}(t),\Psi\rangle_{V}\\
& = & ~ \lim \limits_{h \rightarrow 0}\frac{1}{h}\langle F_{i}(t+h)-F_{i}(t),\Psi \rangle_{V}+\langle \A^{\circ}F_{i}(t),\Psi\rangle_{V} \\
& = & ~ -\lim \limits_{h \rightarrow 0}\frac{1}{h} \left(\int \limits_{0}\limits^{t+\hat{\alpha}_{m}+h}\langle\Psi,\A^{\circ}\hat{X}_{i}(\tau)\rangle_{V}d\tau-\int \limits_{0}\limits^{t+\hat{\alpha}_{m}}\langle\Psi,\A^{\circ}\hat{X}_{i}(\tau)\rangle_{V}d\tau\right)+\langle \A^{\circ}F_{i}(t),\Psi\rangle_{V} \\
& = & ~ -\langle\Psi,\A^{\circ}\hat{X}_{i}(t+\hat{\alpha}_{m})\rangle_{V}+\langle \A^{\circ}F_{i}(t),\Psi\rangle_{V}\\
& = & 0
\end{eqnarray*}
for all $\Psi \in V^{\ast}_{c}$, $i \in \{1,2\}$ and $t \in (0,b_{\varepsilon})\setminus ( M(F_{i})\cup M(\Psi,i))$.\\
Now let $M_{i}:= \bigcup\limits_{\Psi \in V^{\ast}_{c}}M(\Psi,i) \cup  M(F_{i}) $, which is still a Lebesgue null-set since $V^{\ast}_{c}$ is countable. Then the previous calculation implies $\langle F_{i}^{\prime}(t)+\A^{\circ}F_{i}(t),\Psi\rangle_{V} = 0$ for all $\Psi \in V^{\ast}_{c}$ and $t \in (0,b_{\varepsilon})\setminus M_{i}$. Consequently, as $V^{\ast}_{c}$ separates points, we get $0=F_{i}^{\prime}(t)+\A^{\circ}F_{i}(t)$ for every $t \in (0,b_{\varepsilon})\setminus M_{i}$. Finally, iii) yields $F_{i}(t) \in D(\A)$ for all $t \in (0,b_{\varepsilon})$ and consequently $\A^{\circ}F_{i}(t)\in \A F_{i}(t)$ for all $t \in (0,b_{\varepsilon})$. Combining this with $0=F_{i}^{\prime}(t)+\A^{\circ}F(t)$ for every $t \in (0,b_{\varepsilon})\setminus M_{i}$ gives (\ref{prop_uniqueproofeq7}).\\
The results (\ref{prop_uniqueproofeq5})-(\ref{prop_uniqueproofeq7}) enable us to prove (\ref{prop_uniqueproofeq4}). By  (\ref{prop_uniqueproofeq5})-(\ref{prop_uniqueproofeq7}), we have that $F_{i}$ is a strong solution (cf. \cite[Definition 1.2]{BenilanBook}) of the initial value problem
\begin{align}
\label{prop_uniqueproofeq8}
0 \in U^{\prime}_{i}(t)+\A U(t),~\text{ a.e. } t \in (0,b_{\varepsilon}),~ U(0)=u_{i},
\end{align}
for $i=1,2$. Consequently $F_{i}$ is also a mild solution of (\ref{prop_uniqueproofeq8}), cf. \cite[Theorem 1.4]{BenilanBook}. Moreover, as $\A$ is m-accretive and densely defined (\ref{prop_uniqueproofeq8}) has precisely one mild solution, cf. \cite[Prop. 3.7]{BenilanBook}. This necessarily implies $F_{i}(t)=T_{\A}(t)u_{i}$ for $t \in [0,b_{\varepsilon}]$ and $i=1,2$. Therefore invoking Remark \ref{remark_ms} yields
\begin{align*}
||\hat{X}_{1}(t+\hat{\alpha}_{m})-\hat{X}_{2}(t+\hat{\alpha}_{m})||_{V}=||T_{\A}(t)u_{1}-T_{\A}(t)u_{2}||_{V}\leq||u_{1}-u_{2}||_{V}=||\hat{X}_{1}(\hat{\alpha}_{m})-\hat{X}_{2}(\hat{\alpha}_{m})||_{V},
\end{align*}
for all $t \in [0,b_{\varepsilon}]=[0,\hat{\alpha}_{m+1}-\hat{\alpha}_{m}-\varepsilon]$. As $\varepsilon \in (0,\hat{\alpha}_{m+1}-\hat{\alpha}_{m})$ can be chosen arbitrarily small, this holds for all $t \in [0,\hat{\alpha}_{m+1}-\hat{\alpha}_{m})$ which proves (\ref{prop_uniqueproofeq4}).\\

The next (and last) intermediate step enables us to prove the claim and reads as follows: For all $m \in \mathbb{N}$, all $t \in [\hat{\alpha}_{m},\hat{\alpha}_{m+1})$ and all $\varepsilon \in (0,\min(\hat{\alpha}_{1}-\hat{\alpha}_{0},..,\hat{\alpha}_{m}-\hat{\alpha}_{m-1}))$, we have
\begin{align}
\label{prop_uniqueproofeq9}
||\hat{X}_{1}(t)-\hat{X}_{2}(t)||_{V} \leq ||\hat{x}_{1}-\hat{x}_{2}||_{V}+ \sum \limits_{k =1 } \limits^{m}||\hat{X}_{1}(\hat{\alpha}_{k})-\hat{X}_{1}(\hat{\alpha}_{k}-\varepsilon)-\hat{X}_{2}(\hat{\alpha}_{k})+\hat{X}_{2}(\hat{\alpha}_{k}-\varepsilon)||_{V}.
\end{align}
This will be proven inductively. Let $m=1$, $t \in [\hat{\alpha}_{1},\hat{\alpha}_{2})$ and $\varepsilon \in (0,\hat{\alpha}_{1}-\hat{\alpha}_{0})$. Then appealing to (\ref{prop_uniqueproofeq4}) and i) yields
\begin{eqnarray*}
||\hat{X}_{1}(t)-\hat{X}_{2}(t)||_{V}
& \leq & ~ ||\hat{X}_{1}(\hat{\alpha}_{1})-\hat{X}_{2}(\hat{\alpha}_{1})||_{V}\\ 
& \leq & ~||\hat{x}_{1}-\hat{x}_{2}||_{V}+ ||\hat{X}_{1}(\hat{\alpha}_{1})-\hat{X}_{1}(\hat{\alpha}_{1}-\varepsilon)-\hat{X}_{2}(\hat{\alpha}_{1})+\hat{X}_{2}(\hat{\alpha}_{1}-\varepsilon)||_{V}.
\end{eqnarray*}
Induction step: Let $t \in [\hat{\alpha}_{m+1},\hat{\alpha}_{m+2})$ and $\varepsilon \in (0,\min(\hat{\alpha}_{1}-\hat{\alpha}_{0},..,\hat{\alpha}_{m+1}-\hat{\alpha}_{m}))$. Firstly, note that $\hat{\alpha}_{m+1}-\varepsilon \in [\hat{\alpha}_{m},\hat{\alpha}_{m+1})$ and that particularly $\varepsilon \in (0,\min(\hat{\alpha}_{1}-\hat{\alpha}_{0},..,\hat{\alpha}_{m}-\hat{\alpha}_{m-1}))$. Consequently, the induction hypothesis yields that
\begin{align*}
||\hat{X}_{1}(\hat{\alpha}_{m+1}-\varepsilon)-\hat{X}_{2}(\hat{\alpha}_{m+1}-\varepsilon)||_{V} \leq ||\hat{x}_{1}-\hat{x}_{2}||_{V}+ \sum \limits_{k =1 } \limits^{m}||\hat{X}_{1}(\hat{\alpha}_{k})-\hat{X}_{1}(\hat{\alpha}_{k}-\varepsilon)-\hat{X}_{2}(\hat{\alpha}_{k})+\hat{X}_{2}(\hat{\alpha}_{k}-\varepsilon)||_{V}
\end{align*}
Conclusively, appealing to (\ref{prop_uniqueproofeq4}), the triangle inequality and the preceding estimate gives
\begin{align*} 
||\hat{X}_{1}(t)-\hat{X}_{2}(t)||_{V} \leq ||\hat{x}_{1}-\hat{x}_{2}||_{V}+ \sum \limits_{k =1 } \limits^{m+1}||\hat{X}_{1}(\hat{\alpha}_{k})-\hat{X}_{1}(\hat{\alpha}_{k}-\varepsilon)-\hat{X}_{2}(\hat{\alpha}_{k})+\hat{X}_{2}(\hat{\alpha}_{k}-\varepsilon)||_{V},
\end{align*}
which implies (\ref{prop_uniqueproofeq9}).\\
Now the (from here on short) proof the claim will be derived: If $t \in [0,\hat{\alpha}_{1})$ we have
\begin{align*}
||\hat{X}_{1}(t)-\hat{X}_{2}(t)||_{V} \leq ||\hat{x}_{1}-\hat{x}_{2}||_{V}=||\hat{x}_{1}-\hat{x}_{2}||_{V}+ \sum \limits_{m=1}\limits^{\infty}\sum \limits_{k =1}\limits^{m}||\hat{\eta}_{1,k}-\hat{\eta}_{2,k}||_{V}\id_{[\hat{\alpha}_{m},\hat{\alpha}_{m+1})}(t),
\end{align*}
by (\ref{prop_uniqueproofeq4}) and i). If $t \in [\hat{\alpha}_{1},\infty)$, then (\ref{prop_uniqueproofeq0}) yields that there is an $\tilde{m} \in \mathbb{N}$ such that $t \in [\hat{\alpha}_{\tilde{m}},\hat{\alpha}_{\tilde{m}+1})$. Finally, employing (\ref{prop_uniqueproofeq9}) and (\ref{prop_uniqueproofeq2}) gives
\begin{eqnarray*}
	||\hat{X}_{1}(t)-\hat{X}_{2}(t)||_{V}
	& \leq & ~ ||\hat{x}_{1}-\hat{x}_{2}||_{V}+\lim \limits_{\varepsilon \searrow 0} \sum \limits_{k =1 } \limits^{\tilde{m}}||\hat{X}_{1}(\hat{\alpha}_{k})-\hat{X}_{1}(\hat{\alpha}_{k}-\varepsilon)-\hat{X}_{2}(\hat{\alpha}_{k})+\hat{X}_{2}(\hat{\alpha}_{k}-\varepsilon)||_{V}\\
	& = & ~ ||\hat{x}_{1}-\hat{x}_{2}||_{V}+ \sum \limits_{k =1 } \limits^{\tilde{m}}||\hat{\eta}_{1,k}-\hat{\eta}_{2,k}||_{V}\\
	& = & ~ ||\hat{x}_{1}-\hat{x}_{2}||_{V}+ \sum \limits_{m=1}\limits^{\infty}\sum \limits_{k =1}\limits^{m}||\hat{\eta}_{1,k}-\hat{\eta}_{2,k}||_{V}\id_{[\hat{\alpha}_{m},\hat{\alpha}_{m+1})}(t),
\end{eqnarray*}
which concludes the proof.
\end{proof}

\begin{theorem}\label{theorem_unique}  Let $V^{\ast} \subseteq V^{\prime}$ be a set that separates points, $\eta_{1},~\eta_{2}\in \mathcal{M}((0,\infty)\times Z \times \Omega;V)$ and introduce $x_{1},~x_{2}\in \mathcal{M}(\Omega;V)$. Moreover, assume that $X_{i}:[0,\infty)\times \Omega\rightarrow V$ is a mild solution of (\ref{acprm})$\{x_{i},\eta_{i},V^{\ast}\}$ for $i=1,2$. Then we have
	\begin{align}
	\label{theorem_uniqueeq}
	||X_{1}(t)-X_{2}(t)||_{V}\leq ||x_{1}-x_{2}||_{V}+\int \limits_{(0,t]\times Z} ||\eta_{1}(\tau,z)-\eta_{2}(\tau,z)||_{V} N_{\Theta}(d\tau\otimes z),~\forall t \geq 0,
	\end{align}
	almost surely.
\end{theorem}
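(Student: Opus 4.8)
The plan is to deduce the estimate from Proposition \ref{prop_unique} by approximating the two mild solutions by strong solutions and passing to the limit. First I would apply Definition \ref{def_solutionacprm} to each $X_{i}$ ($i=1,2$) to obtain sequences $(x_{i,m})_{m\in\mathbb{N}}$ in $\mathcal{M}(\Omega;V)$, $(\eta_{i,m})_{m\in\mathbb{N}}$ in $\mathcal{M}((0,\infty)\times Z\times\Omega;V)$ and processes $(X_{i,m})_{m\in\mathbb{N}}$ such that $X_{i,m}$ is a strong solution of (\ref{acprm})$\{x_{i,m},\eta_{i,m},V^{\ast}\}$, such that $\sup_{\tau\in[0,t]}||X_{i,m}(\tau)-X_{i}(\tau)||_{V}\to0$ as $m\to\infty$ for every $t>0$ almost surely, and such that $\int_{(0,t]\times Z}||\eta_{i,m}(\tau,z)-\eta_{i}(\tau,z)||_{V}N_{\Theta}(d\tau\otimes z)\to0$ as $m\to\infty$ for every $t>0$ almost surely. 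Proposition \ref{prop_unique}, applied to the pair $X_{1,m},X_{2,m}$ of strong solutions, then yields for every $m\in\mathbb{N}$, almost surely and for all $t\geq0$,
\begin{align*}
||X_{1,m}(t)-X_{2,m}(t)||_{V}\leq ||x_{1,m}-x_{2,m}||_{V}+\int_{(0,t]\times Z}||\eta_{1,m}(\tau,z)-\eta_{2,m}(\tau,z)||_{V}\,N_{\Theta}(d\tau\otimes z).
\end{align*}

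Next I would fix a single $\P$-null-set $M\in\F$ outside of which the displayed inequality holds for every $m\in\mathbb{N}$ and all $t\geq0$ and outside of which all the convergences listed above (countably many statements) hold; such an $M$ exists since a countable union of $\P$-null-sets is a $\P$-null-set. For $\omega\in\Omega\setminus M$ and an arbitrary fixed $t\geq0$ one may then let $m\to\infty$ in the inequality. On the left, uniform convergence on $[0,t]$ gives pointwise convergence at $t$, so $||X_{1,m}(t)-X_{2,m}(t)||_{V}\to||X_{1}(t)-X_{2}(t)||_{V}$ by the triangle inequality. For the first term on the right, specializing the uniform-convergence statement to $\tau=0$ and using $X_{i,m}(0)=x_{i,m}$ and $X_{i}(0)=x_{i}$ gives $||x_{i,m}-x_{i}||_{V}\to0$, hence $||x_{1,m}-x_{2,m}||_{V}\to||x_{1}-x_{2}||_{V}$. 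For the integral term, I would use linearity of the Lebesgue integral with respect to the finite measure $N_{\Theta}(\cdot,\omega)$ — all integrals involved being finite by Lemma \ref{lemma_intex} — together with the triangle inequality, to obtain
\begin{align*}
\left|\int_{(0,t]\times Z}||\eta_{1,m}-\eta_{2,m}||_{V}\,N_{\Theta}(d\tau\otimes z)-\int_{(0,t]\times Z}||\eta_{1}-\eta_{2}||_{V}\,N_{\Theta}(d\tau\otimes z)\right|\leq\sum_{i=1}^{2}\int_{(0,t]\times Z}||\eta_{i,m}-\eta_{i}||_{V}\,N_{\Theta}(d\tau\otimes z),
\end{align*}
which converges to $0$ as $m\to\infty$. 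Passing to the limit in the inequality then yields (\ref{theorem_uniqueeq}) for $\omega\in\Omega\setminus M$ and all $t\geq0$.

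I do not expect a genuine obstacle here; the statement is essentially a stability (closedness) property extracted from the strong-solution estimate. The two points that need a little care are the null-set bookkeeping just described — one must keep the exceptional set independent of $m$ and of $t$ so that the limit can be taken simultaneously for all $t\geq0$ — and the observation that the almost-sure convergence $x_{i,m}\to x_{i}$ is not postulated as such in Definition \ref{def_solutionacprm}, but has to be read off from condition ix) by evaluating the supremum over $[0,t]$ at $\tau=0$.
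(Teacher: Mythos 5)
Your proposal is correct and follows essentially the same route as the paper: extract the approximating strong solutions from the definition of mild solution, apply Proposition \ref{prop_unique} to each pair $X_{1,m},X_{2,m}$, and pass to the limit almost surely, with the convergence $x_{i,m}\to x_{i}$ read off from the uniform convergence at $\tau=0$. Your null-set bookkeeping and the triangle-inequality treatment of the integral term are exactly the details the paper's (terser) limit computation relies on.
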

\begin{proof} Let $x_{i,m} \in \mathcal{M}(\Omega;V)$, $\eta_{i,m}\in \mathcal{M}((0,\infty)\times Z \times \Omega;V)$ and $X_{i,m}:\Omega \times [0,\infty)\rightarrow V$ be such that
\begin{enumerate}
	\item $X_{i,m}$ is a strong solution of (\ref{acprm})$\{x_{i,m},\eta_{i,m},V^{\ast}\}$ for all $m \in \mathbb{N}$ and $i \in \{1,2\}$,
	\item $\lim \limits_{m \rightarrow\infty} \sup \limits_{\tau \in [0,t]} ||X_{i,m}(\tau)-X_{i}(\tau)||_{V}=0$ for all $t> 0$, $i \in \{1,2\}$ almost surely,
	\item $\lim \limits_{m \rightarrow\infty}\int \limits_{(0,t]\times Z}||\eta_{i,m}(\tau,z)-\eta_{i}(\tau,z)||_{V}N_{\Theta}(d\tau\otimes z)=0$ for all $t> 0$, $i \in \{1,2\}$ almost surely, and
	\item $||X_{1,m}(t)-X_{2,m}(t)||_{V}\leq ||x_{1,m}-x_{2,m}||_{V}+\int \limits_{(0,t]\times Z} ||\eta_{1,m}(\tau,z)-\eta_{2,m}(\tau,z)||_{V} N_{\Theta}(d\tau\otimes z)$ for all $t\geq 0$, $m \in \mathbb{N}$, $i \in \{1,2\}$ almost surely.
\end{enumerate}
Proposition \ref{prop_unique} (and the definition of mild solution) guarantee the existence of these quantities. Consequently, we have
\begin{eqnarray*}
	& & ~
||X_{1}(t)-X_{2}(t)||_{V} \\
& = & ~ \lim \limits_{m \rightarrow \infty } ||X_{1,m}(t)-X_{2,m}(t)||_{V} \\
& \leq & ~ \lim \limits_{m \rightarrow \infty } ||x_{1,m}-x_{2,m}||_{V}+\int \limits_{(0,t]\times Z} ||\eta_{1,m}(\tau,z)-\eta_{2,m}(\tau,z)||_{V} N_{\Theta}(d\tau\otimes z)  \\
& = & ~  ||x_{1}-x_{2}||_{V}+\int \limits_{(0,t]\times Z} ||\eta_{1}(\tau,z)-\eta_{2}(\tau,z)||_{V} N_{\Theta}(d\tau\otimes z)  
\end{eqnarray*}
for all $t\geq 0$, with probability one.
\end{proof}

Theorem \ref{theorem_unique} has two important consequences: Uniqueness of mild solutions of (\ref{acprm}) and an upper bound for the solution.

\begin{corollary}\label{corollary_unique} (\ref{acprm}) has at most one mild solution; more precisely: Let $V^{\ast}\subseteq V^{\prime}$ be such that it separates points, let $x \in \mathcal{M}(\Omega;V)$, $\eta \in \mathcal{M}((0,\infty)\times Z \times \Omega ; V)$ and assume that $X_{1},~X_{2}:[0,\infty)\times \Omega \rightarrow V$ are mild solutions of (\ref{acprm})$\{x,\eta,V^{\ast}\}$, then $X_{1}$ and $X_{2}$ are indistinguishable.
\end{corollary}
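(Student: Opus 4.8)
The plan is to obtain the corollary as an immediate specialization of Theorem \ref{theorem_unique}. Since by hypothesis $X_1$ and $X_2$ are both mild solutions of (\ref{acprm})$\{x,\eta,V^{\ast}\}$ for the \emph{same} data $x \in \mathcal{M}(\Omega;V)$, $\eta \in \mathcal{M}((0,\infty)\times Z \times \Omega;V)$ and the same point-separating set $V^{\ast}\subseteq V^{\prime}$, I would apply Theorem \ref{theorem_unique} with the choices $x_1 = x_2 = x$ and $\eta_1 = \eta_2 = \eta$. Then inequality (\ref{theorem_uniqueeq}) becomes $\|X_1(t)-X_2(t)\|_V \le \|x-x\|_V + \int_{(0,t]\times Z}\|\eta(\tau,z)-\eta(\tau,z)\|_V\, N_{\Theta}(d\tau\otimes z) = 0$ for all $t \ge 0$, almost surely; i.e.\ there is a $\P$-null set $M \in \F$ with $X_1(t,\omega) = X_2(t,\omega)$ for all $t \ge 0$ and all $\omega \in \Omega \setminus M$. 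That is precisely the assertion that $X_1$ and $X_2$ are indistinguishable.

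If one prefers to read the conclusion of Theorem \ref{theorem_unique} as holding, for each fixed $t \ge 0$, only on a $t$-dependent null set, then I would first take the countable union of these null sets over $t \in \mathbb{Q} \cap [0,\infty)$ to obtain a single $\P$-null set off which $X_1$ and $X_2$ coincide on all nonnegative rationals, and then use the càdlàg property of the paths of mild solutions (condition ii) of Definition \ref{def_solutionacprm}, satisfied by $X_1$ and $X_2$ almost surely) to extend the equality $X_1(t,\omega)=X_2(t,\omega)$ to every $t \ge 0$ by right-continuity, again on the complement of a $\P$-null set. Either route yields indistinguishability. There is essentially no obstacle here: the substantive work has already been done in Proposition \ref{prop_unique} and Theorem \ref{theorem_unique}, and this corollary is just the $x_1=x_2$, $\eta_1=\eta_2$ case stated in terms of processes rather than fixed-time random variables.
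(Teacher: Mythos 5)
Your proposal is correct and is exactly the argument the paper intends: the corollary is stated as an immediate consequence of Theorem \ref{theorem_unique} (no separate proof is given), obtained by taking $x_{1}=x_{2}=x$ and $\eta_{1}=\eta_{2}=\eta$ so that the right-hand side of (\ref{theorem_uniqueeq}) vanishes. Your remark about upgrading a possibly $t$-dependent null set via rationals and right-continuity is a sensible precaution but not needed, since (\ref{theorem_uniqueeq}) is already asserted for all $t\geq 0$ on a single null set.
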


\begin{theorem}\label{theorem_bound} Let $V^{\ast}\subseteq V^{\prime}$ be such that it separates points, let $x \in \mathcal{M}(\Omega;V)$, $\eta \in \mathcal{M}((0,\infty)\times Z \times \Omega ; V)$ and assume that $X:[0,\infty)\times \Omega \rightarrow V$ is a mild solution of (\ref{acprm})$\{x,\eta,V^{\ast}\}$. Finally, assume that $(0,0) \in \mathcal{A}0$. Then we have
\begin{align}
\label{theorem_boundeq}
||X(t)||_{V}\leq ||x||_{V}+\int \limits_{(0,t]\times Z} ||\eta(\tau,z)||_{V} N_{\Theta}(d\tau\otimes z),~\forall t \geq 0,
\end{align}
almost surely.
\end{theorem}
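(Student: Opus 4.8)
The plan is to deduce this from Theorem~\ref{theorem_unique} by comparing the given mild solution $X$ with the trivial solution. First I would record the consequence of the extra hypothesis $(0,0)\in\A 0$: it gives $0\in D(\A)$, and moreover $T_{\A}(t)0=0$ for every $t\ge 0$. Indeed, the constant map $u\equiv 0$ lies in $W^{1,1}([a,b];V)$ for every $[a,b]$, satisfies $u'\equiv 0$, and $0\in u'(t)+\A u(t)=\A 0$ by assumption, so $u\equiv 0$ is a strong — hence, cf. \cite[Theorem~1.4]{BenilanBook}, a mild — solution of $0\in u'(t)+\A u(t)$, $u(0)=0$; by the uniqueness of mild solutions recalled in Remark~\ref{remark_ms} this forces $T_{\A}(\cdot)0\equiv 0$. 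From the defining limit of the infinitesimal generator it then follows that $\A^{\circ}0=-\lim_{h\searrow 0}h^{-1}\big(T_{\A}(h)0-0\big)=0$.

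Next I would verify that the zero process $X_{0}:[0,\infty)\times\Omega\rightarrow V$, $X_{0}\equiv 0$, is a strong solution of (\ref{acprm})$\{0,0,V^{\ast}\}$. Items i)--v) of Definition~\ref{def_solutionacprm} are immediate: $X_{0}(0,\omega)=0$; $X_{0}(\cdot,\omega)$ is c\`{a}dl\`{a}g; $X_{0}(t,\omega)=0\in D(\A)$ for all $t$; $X_{0}(\cdot,\omega)|_{[a,b]}\in W^{1,1}([a,b];V)$; and $\langle\Psi,\A^{\circ}X_{0}(\cdot,\omega)\rangle_{V}\equiv 0\in L^{1}(0,t)$. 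Item vi) holds because both sides vanish: using $\A^{\circ}0=0$ the left-hand side equals $\langle\Psi,0-0\rangle_{V}+\int_{0}^{t}\langle\Psi,\A^{\circ}0\rangle_{V}d\tau=0$, while the right-hand side is $\int_{(0,t]\times Z}\langle\Psi,0\rangle_{V}N_{\Theta}(d\tau\otimes z)=0$. Taking the constant approximating sequences $x_{m}:=0$, $\eta_{m}:=0$, $X_{m}:=X_{0}$ shows that $X_{0}$ is, in particular, also a mild solution of (\ref{acprm})$\{0,0,V^{\ast}\}$.

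Finally I would apply Theorem~\ref{theorem_unique} with $X_{1}:=X$, a mild solution of (\ref{acprm})$\{x,\eta,V^{\ast}\}$, and $X_{2}:=X_{0}$, a mild solution of (\ref{acprm})$\{0,0,V^{\ast}\}$. Since $||X(t)-0||_{V}=||X(t)||_{V}$, $||x-0||_{V}=||x||_{V}$ and $||\eta(\tau,z)-0||_{V}=||\eta(\tau,z)||_{V}$, the inequality (\ref{theorem_uniqueeq}) becomes exactly (\ref{theorem_boundeq}), almost surely for all $t\ge 0$. There is essentially no obstacle here beyond the one bookkeeping point flagged in the first paragraph — namely establishing $\A^{\circ}0=0$ (equivalently $T_{\A}(\cdot)0\equiv 0$) from $(0,0)\in\A 0$, which is precisely where the hypothesis $(0,0)\in\A 0$ is used; everything else is a direct specialization of Theorem~\ref{theorem_unique} to the zero solution.
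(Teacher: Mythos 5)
Your proposal is correct and follows essentially the same route as the paper: from $(0,0)\in\A 0$ one gets $T_{\A}(\cdot)0\equiv 0$ and $\A^{\circ}0=0$, so the zero process is a strong (hence mild) solution of (\ref{acprm})$\{0,0,V^{\ast}\}$, and the bound is then the Lipschitz estimate of Theorem~\ref{theorem_unique} applied to $X$ and the zero solution. You merely spell out in more detail the steps the paper calls ``plain'' (the semigroup fixing $0$ and the verification of Definition~\ref{def_solutionacprm} for the zero process), which is fine.
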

\begin{proof} If $0 \in \mathcal{A}0$, then it is plain that $T_{\A}(t)0=0$ for all $t\geq 0$. Consequently, we have $\mathcal{A}^{\circ}0=0$. This implies that the stochastic process which is constantly zero, is a strong (and therefore also mild) solution of (\ref{acprm})$\{0,0,V^{\ast}\}$. Consequently, the claim follows from Theorem \ref{theorem_unique}.
\end{proof}

Due to the nonlinearity it is generally not true that $0 \in D(\A)$. Moreover, we shall see that (\ref{theorem_boundeq}) cannot be improved without additional assumptions; by that we mean that for the $p$-Laplacian example considered in the next section, we will find a nontrivial drift $\eta$ and a nontrivial initial $x$, such that the inequality in (\ref{theorem_boundeq}) turns into an equality, see Theorem \ref{theorem_plaplacebound}.\\

Now we will turn to the question of existence. To this end, some preparatory lemmas are in order:

\begin{lemma}\label{lemma_ll} Let $v \in V$ be arbitrary but fixed. Then $T_{\A}(\cdot)v$ is locally Lipschitz continuous on $(0,\infty)$ and differentiable almost everywhere. Moreover, we have
\begin{align}
\label{lemma_sseq}
-\A^{\circ}T_{\A}(t)v=T^{\prime}_{\A}(t)v,~\text{a.e. } t \in (0,\infty).
\end{align}
\end{lemma}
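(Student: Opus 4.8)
The plan is to reduce the whole statement to the integral identity
$$T_{\A}(t)v = T_{\A}(s)v - \int_{s}^{t}\A^{\circ}T_{\A}(\tau)v\,d\tau,\qquad 0<s\le t,$$
with $\tau\mapsto\A^{\circ}T_{\A}(\tau)v$ locally Bochner integrable on $(0,\infty)$. Once this is in hand, the Lebesgue differentiation theorem for Bochner integrals — which holds in \emph{every} Banach space — gives, at each Lebesgue point of $\tau\mapsto\A^{\circ}T_{\A}(\tau)v$, both the a.e.\ differentiability of $T_{\A}(\cdot)v$ and the formula (\ref{lemma_sseq}) simultaneously. To get there I would need three ingredients: (a) a Lipschitz bound for the orbit emanating from a point of $D(\A)$; (b) local Lipschitz continuity of $T_{\A}(\cdot)v$ on $(0,\infty)$; (c) existence of the right derivative of $T_{\A}(\cdot)v$ at every $t>0$.

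For (a), I would fix $w\in D(\A)$ and put $\rho(t):=\|T_{\A}(t)w-w\|_{V}$. The reverse triangle inequality together with the semigroup and contraction properties (Remark \ref{remark_ms}) gives $\rho(t+s)-\rho(t)\le\|T_{\A}(t)T_{\A}(s)w-T_{\A}(t)w\|_{V}\le\rho(s)$, so $\rho$ is subadditive with $\rho(0)=0$; since $\A$ admits an infinitesimal generator, $\rho(s)/s\to\|\A^{\circ}w\|_{V}$ as $s\searrow0$, whence $\rho(h)\le n\rho(h/n)=h\,\rho(h/n)/(h/n)$ for all $n\in\N$ and, letting $n\to\infty$, $\|T_{\A}(h)w-w\|_{V}\le h\|\A^{\circ}w\|_{V}$ for all $h\ge0$. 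For (b), fix $v\in V$ and $[a,b]\subseteq(0,\infty)$; by domain invariance $w:=T_{\A}(a)v\in D(\A)$, and for $a\le s\le t\le b$ the semigroup property and contraction give $\|T_{\A}(t)v-T_{\A}(s)v\|_{V}=\|T_{\A}(s-a)T_{\A}(t-s)w-T_{\A}(s-a)w\|_{V}\le\|T_{\A}(t-s)w-w\|_{V}\le(t-s)\|\A^{\circ}w\|_{V}$, so $T_{\A}(\cdot)v$ is Lipschitz on $[a,b]$ with the finite constant $\|\A^{\circ}T_{\A}(a)v\|_{V}$. For (c), for each $t>0$ domain invariance gives $T_{\A}(t)v\in D(\A)$, and the semigroup property reduces the right difference quotient of $T_{\A}(\cdot)v$ at $t$ to the one at $0$ through the point $T_{\A}(t)v$; by the defining property of $\A^{\circ}$ the right derivative therefore exists and equals $-\A^{\circ}T_{\A}(t)v$.

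With (a)–(c) available I would finish as follows. The map $g(\tau):=\A^{\circ}T_{\A}(\tau)v$ is $V$-valued with $V$ separable, and for each $\Psi\in V'$ the scalar function $\tau\mapsto\langle\Psi,g(\tau)\rangle_{V}$ is the pointwise limit of the continuous functions $\tau\mapsto\frac1h\langle\Psi,T_{\A}(\tau)v-T_{\A}(\tau+h)v\rangle_{V}$, hence measurable; Pettis's theorem then makes $g$ strongly measurable, and by the bound in (b) it is bounded on every $[a,b]\subseteq(0,\infty)$, so $g\in L^{1}_{\text{loc}}((0,\infty);V)$. Next, for fixed $\Psi\in V'$ the function $\psi:=\langle\Psi,T_{\A}(\cdot)v\rangle_{V}$ is locally Lipschitz on $(0,\infty)$ by (b), hence absolutely continuous, and as a real Lipschitz function it is differentiable a.e.\ with $\psi'=\psi'_{+}=-\langle\Psi,g\rangle_{V}$ a.e.; integrating, $\langle\Psi,T_{\A}(t)v-T_{\A}(s)v\rangle_{V}=-\langle\Psi,\int_{s}^{t}g(\tau)\,d\tau\rangle_{V}$. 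Since $V'$ separates the points of $V$ this is precisely the integral identity of the first paragraph, and differentiating it at the Lebesgue points of $g$ gives $T^{\prime}_{\A}(t)v=-\A^{\circ}T_{\A}(t)v$ for a.e.\ $t$, which completes the proof.

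The one genuinely delicate point is the passage from (b)–(c) to the conclusion: in a general separable Banach space, lacking the Radon--Nikod\'ym property, a Lipschitz orbit need not be differentiable a.e., so the everywhere-existing right derivative does not by itself yield norm differentiability. The device that rescues the argument is to first extract a Bochner-integral fundamental theorem of calculus for $T_{\A}(\cdot)v$ by scalarizing — exploiting that real Lipschitz functions are absolutely continuous and that $V'$ separates points — and only then to invoke the unconditionally valid Lebesgue differentiation theorem for Bochner integrals. A subordinate technicality is the strong measurability of $\tau\mapsto\A^{\circ}T_{\A}(\tau)v$, which is handled through Pettis's theorem and the separability of $V$.
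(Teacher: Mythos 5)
Your proof is correct, and its overall skeleton coincides with the paper's (local Lipschitz continuity via an orbit starting in $D(\A)$, everywhere-existing right derivative equal to $-\A^{\circ}T_{\A}(t)v$ by domain invariance, and then the a.e.\ differentiation theorem for indefinite Bochner integrals, which needs no Radon--Nikod\'ym hypothesis on $V$). The two places where you genuinely deviate are worth noting. First, the Lipschitz estimate $\|T_{\A}(h)w-w\|_{V}\le h\|\A^{\circ}w\|_{V}$ for $w\in D(\A)$: the paper simply cites \cite[Lemma 7.8]{BenilanBook}, whereas your subadditivity argument for $\rho(t)=\|T_{\A}(t)w-w\|_{V}$ combined with $\rho(s)/s\to\|\A^{\circ}w\|_{V}$ proves it from scratch (note it uses the standing assumption that $\A$ admits an infinitesimal generator, so it is slightly less general than the cited lemma, but entirely sufficient here). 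Second, and more substantially, the identification of $T_{\A}(\cdot)v$ with the indefinite Bochner integral of its right derivative: you scalarize with $\Psi\in V'$, use that real Lipschitz functions are absolutely continuous and differentiable a.e., and then invoke that $V'$ separates points; the paper instead compares $\zeta(t)=T_{\A}(t)v$ with $\zeta_{\ast}(t)=u+\int \zeta_{r}'$ through the real function $\Gamma(t)=\|\zeta(t)-\zeta_{\ast}(t)\|_{V}$, showing $\Gamma$ is Lipschitz with vanishing right derivative a.e., hence constant. Both devices serve exactly the purpose you flag in your closing remark (circumventing the failure of a.e.\ differentiability of Lipschitz curves in general Banach spaces); your duality route is arguably more transparent and reuses the separation-of-points theme of the paper, while the paper's norm-comparison argument avoids any appeal to the Hahn--Banach-type separation and to Pettis measurability (it gets strong measurability of $\zeta_r'$ directly as a pointwise limit of strongly measurable functions, as you could too). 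No gaps.
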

\begin{proof}  Let $m \in \mathbb{N}\setminus\{1\}$. We will prove that $T_{\A}(\cdot)v|_{[\frac{1}{m},m]}$ is Lipschitz continuous, differentiable almost everywhere and that (\ref{lemma_sseq}) holds a.e. on $(\frac{1}{m},m)$; which obviously implies the claim.\\
For the sake of space, introduce $\zeta :[\frac{1}{m},m]\rightarrow V$ by $\zeta(t):=T_{\A}(t)v$. Moreover, set $u:=T_{\A}(\frac{1}{m})v$ and note that $u \in D(\A)$, since $T_{\A}$ is assumed to be domain invariant. Consequently, \cite[Lemma 7.8]{BenilanBook} yields that there is a constant $L>0$ such that
\begin{align*}
||T_{\A}(\tau_{1})u-T_{\A}(\tau_{2})u||_{V} \leq L ||\tau_{1}-\tau_{2}||_{V},~\forall \tau_{1},~\tau_{2}\in [0,m-\frac{1}{m}].
\end{align*}
Hence we get
\begin{align}
\label{lemma_ssproofeq0}
||\zeta(t_{1})-\zeta(t_{2})||_{V} = ||T_{\A}(t_{1}-\frac{1}{m})u-T_{\A}(t_{2}-\frac{1}{m})u||_{V} \leq L|t_{1}-t_{2}|,~\forall t_{1},~t_{2}\in [\frac{1}{m},m],
\end{align}
which gives the desired Lipschitz continuity.\\
Moreover, invoking (\ref{def_geneq}) and having in mind the domain invariance yields
\begin{align}
\label{lemma_ssproofeq1}
\lim \limits_{h \searrow 0} \frac{\zeta(t+h)-\zeta(t)}{h}=\frac{T_{\A}(h)T_{\A}(t)v-T_{\A}(t)v}{h}=-\A^{\circ}T_{\A}(t)v=-\A^{\circ}\zeta(t)=:\zeta^{\prime}_{r}(t),~\forall t \in [\frac{1}{m},m).
\end{align}
Consequently, $\zeta$ is (everywhere) right differentiable. Moreover, the preceding equation yields that it remains to prove that it is also differentiable almost everywhere.\\
To this end, note that Remark \ref{remark_ms}.ii) implies that $\zeta$ is continuous. Consequently, it is a fortiori  (w.r.t. the Lebesgue measure) strongly measurable, see \cite[Corollary 1.1.2.c)]{greenbook}. Moreover, the continuity also yields that
	\begin{align*}
	\int \limits_{\frac{1}{m}}\limits^{m} ||\zeta(t)||_{V} dt < \infty
	\end{align*}
	and therefore $\zeta \in L^{1}([\frac{1}{m},m];V)$, by \cite[Theorem 1.1.4]{greenbook}.\\
Moreover, (\ref{lemma_ssproofeq1}) implies that $\zeta^{\prime}_{r}$ is the pointwise limit of strongly measurable functions and therefore strongly measurable as well, see \cite[Corollary 1.1.2.d)]{greenbook}. In addition, (\ref{lemma_ssproofeq0}) yields that
\begin{align*}
	\int \limits_{\frac{1}{m}}\limits^{m}||\zeta^{\prime}_{r}(t)||_{V} dt \leq \int \limits_{\frac{1}{m}}\limits^{m}L d t =  L(m-\frac{1}{m})< \infty,
\end{align*}
which gives $\zeta^{\prime}_{r} \in L^{1}([\frac{1}{m},m];V)$.\\
	Now introduce $\zeta_{\ast}:[\frac{1}{m},m]\rightarrow V$, by
	\begin{align*}
	\zeta_{\ast}(t):= \int \limits_{\frac{1}{m}}\limits^{t} \zeta^{\prime}_{r}(\tau)d\tau+u,~\forall t \in [\frac{1}{m},m].
	\end{align*}
	Then the fundamental theorem of calculus for Bochner integrals (see \cite[Proposition 1.2.2]{greenbook}) yields that $\zeta_{\ast}$ is differentiable almost everywhere and that $\zeta_{\ast}^{\prime}(t)=\zeta^{\prime}_{r}(t)$ for a.e. $t \in [\frac{1}{m},m]$.\\
	Consequently, the claim follows if $\zeta(t)=\zeta_{\ast}(t)$ for every $t \in [\frac{1}{m},m]$.\\
	To prove this, introduce $\Gamma:[\frac{1}{m},m] \rightarrow \mathbb{R}$ by
	\begin{align*}
	\Gamma(t):= ||\zeta(t)-\zeta_{\ast}(t)||_{V},~\forall t \in [\frac{1}{m},m].
	\end{align*}
	Firstly, note that obviously $\Gamma(\frac{1}{m})=0$. Moreover, we have
	\begin{eqnarray*}
		& & ~
		\lim \limits_{h \searrow  0}   \left|\frac{\Gamma(t+h)-\Gamma(t)}{h}\right| \\
		& \leq  & ~ \lim \limits_{h \searrow  0} \left(\left| \left| \frac{\zeta(t+h)-\zeta(t)}{h}-\zeta^{\prime}_{r}(t) \right|\right|_{V} + \left| \left| \frac{-\zeta_{\ast}(t+h)+\zeta_{\ast}(t)}{h}+\zeta^{\prime}_{r}(t) \right|\right|_{V}\right) \\ 
		& = & ~ 0,
	\end{eqnarray*}
	for almost every $t \in [\frac{1}{m},m)$, i.e.  $\Gamma$ is almost everywhere right differentiable and the right derivative is equal to zero.\\
	In addition, one has by invoking Lemma (\ref{lemma_ssproofeq0}) that
	\begin{align*}
	|\Gamma(t+h)-\Gamma(t)| \leq Lt+\left|\left|\int\limits_{t}\limits^{t+h}\zeta^{\prime}_{r}(\tau)d\tau\right|\right|\leq Lt+\int\limits_{t}\limits^{t+h}||\zeta^{\prime}_{r}(\tau)||d\tau \leq 2Lt
	\end{align*}
	for all $t \in [\frac{1}{m},m]$ and $0 < h \leq m-t$.\\
	Conclusively, the last estimate yields that $\Gamma$ is Lipschitz continuous, which implies, as $\mathbb{R}$ has the Radon-Nikodym property, that it is differentiable almost everywhere. Since the right derivate of $\Gamma$ is zero almost everywhere, the almost everywhere derivative is also zero a.e. Finally, the Lipschitz continuity of $\Gamma$ yields that $\Gamma$ is constant, and hence $\Gamma(t)=0$ for all $t \in [\frac{1}{m},m]$.
\end{proof}

\begin{lemma}\label{lemma_intcalcu} Let $t> 0$, $v \in V$ and $\Psi \in V^{\prime}$. Moreover, assume that $\langle\Psi,\A^{\circ}T_{\A}(\cdot)v\rangle_{V}\in L^{1}((0,t))$. Then we have
\begin{align*}
\int \limits_{0}\limits^{t}\langle\Psi,\A^{\circ}T_{\A}(\tau)v\rangle_{V}d\tau=-\langle \Psi, T_{\A}(t)v-v\rangle_{V}.
\end{align*}
\end{lemma}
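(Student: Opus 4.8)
The plan is to reduce the vector-valued identity to a one-dimensional fundamental-theorem-of-calculus argument by testing with the fixed functional $\Psi$. To this end I would introduce the scalar function $g:[0,\infty)\rightarrow\mathbb{R}$, $g(\tau):=\langle\Psi,T_{\A}(\tau)v\rangle_{V}$, and first record its regularity. By Remark \ref{remark_ms}.ii) and the continuity of $\Psi$, $g$ is continuous on $[0,\infty)$ with $g(0)=\langle\Psi,v\rangle_{V}$; by Lemma \ref{lemma_ll} the map $T_{\A}(\cdot)v$ is locally Lipschitz on $(0,\infty)$, whence the estimate $|g(\tau_{1})-g(\tau_{2})|\leq\|\Psi\|_{V^{\prime}}\,\|T_{\A}(\tau_{1})v-T_{\A}(\tau_{2})v\|_{V}$ shows that $g$ is locally Lipschitz on $(0,\infty)$ as well.

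Next I would identify $g^{\prime}$. At any point $\tau_{0}$ where $T_{\A}(\cdot)v$ is differentiable in norm, the continuity of $\Psi$ yields that $g$ is differentiable at $\tau_{0}$ with $g^{\prime}(\tau_{0})=\langle\Psi,T_{\A}^{\prime}(\tau_{0})v\rangle_{V}$; combining this with the a.e.\ identity (\ref{lemma_sseq}) from Lemma \ref{lemma_ll} gives $g^{\prime}(\tau)=-\langle\Psi,\A^{\circ}T_{\A}(\tau)v\rangle_{V}$ for a.e.\ $\tau\in(0,\infty)$.

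Now fix $\varepsilon\in(0,t)$. Since $g|_{[\varepsilon,t]}$ is Lipschitz, hence absolutely continuous, and $\mathbb{R}$ has the Radon--Nikodym property, the (real-valued) fundamental theorem of calculus applies and gives $g(t)-g(\varepsilon)=\int_{\varepsilon}^{t}g^{\prime}(\tau)\,d\tau=-\int_{\varepsilon}^{t}\langle\Psi,\A^{\circ}T_{\A}(\tau)v\rangle_{V}\,d\tau$. Finally I would let $\varepsilon\searrow 0$: the left-hand side converges to $g(t)-g(0)=\langle\Psi,T_{\A}(t)v-v\rangle_{V}$ by continuity of $g$, while on the right-hand side the standing hypothesis $\langle\Psi,\A^{\circ}T_{\A}(\cdot)v\rangle_{V}\in L^{1}((0,t))$ permits dominated convergence, so $\int_{\varepsilon}^{t}\rightarrow\int_{0}^{t}$. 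Rearranging the resulting identity $\langle\Psi,T_{\A}(t)v-v\rangle_{V}=-\int_{0}^{t}\langle\Psi,\A^{\circ}T_{\A}(\tau)v\rangle_{V}\,d\tau$ gives the claim.

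The only genuinely delicate point is the passage $\varepsilon\searrow 0$ inside the integral: absent the $L^{1}((0,t))$ hypothesis the integrand need not be integrable near $0$ (the quantity $\A^{\circ}T_{\A}(\tau)v$ can blow up as $\tau\searrow 0$ when $v\notin D(\A)$), and it is exactly this hypothesis that renders the statement meaningful and the limit legitimate; everything else is routine. An essentially equivalent alternative would be to invoke the Bochner fundamental-theorem identity $T_{\A}(t)v=T_{\A}(\tfrac1m)v+\int_{1/m}^{t}\bigl(-\A^{\circ}T_{\A}(\tau)v\bigr)\,d\tau$ for $\tfrac1m<t$, established within the proof of Lemma \ref{lemma_ll}, apply $\langle\Psi,\cdot\rangle_{V}$ (which commutes with the Bochner integral), and send $m\to\infty$; I prefer the scalar version above since it avoids any Bochner-integrability bookkeeping.
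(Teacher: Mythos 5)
Your proposal is correct and follows essentially the same route as the paper's proof: both use Lemma \ref{lemma_ll} to get local Lipschitz continuity and the a.e.\ identity $\frac{\partial}{\partial\tau}\langle\Psi,T_{\A}(\tau)v\rangle_{V}=-\langle\Psi,\A^{\circ}T_{\A}(\tau)v\rangle_{V}$ on $(\varepsilon,t)$, apply the scalar fundamental theorem of calculus there, and then let $\varepsilon\searrow 0$ via the continuity of $T_{\A}(\cdot)v$ at $0$ together with dominated convergence justified by the $L^{1}((0,t))$ hypothesis. Your write-up merely spells out the intermediate regularity of the scalar function in slightly more detail than the paper does.
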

\begin{proof} Let $\varepsilon\in (0,t)$ be arbitrary but fixed. Firstly, Lemma \ref{lemma_ll} obviously implies that the mapping $(\varepsilon,t)\ni \tau \mapsto \langle\Psi,T_{\A}(\tau)v\rangle_{V}$ is Lipschitz continuous and differentiable almost everywhere with
\begin{align*}
\frac{\partial}{\partial \tau}\langle\Psi,T_{\A}(\tau)v\rangle_{V} = - \langle\Psi,\A^{\circ}T_{\A}(\tau)v\rangle_{V},~\text{a.e. } \tau \in (\varepsilon,t). 
\end{align*}
Consequently, we have
\begin{align}
\label{lemma_prooofinteq1}
\int \limits_{\varepsilon}\limits^{t}\langle\Psi,\A^{\circ}T_{\A}(\tau)v\rangle_{V}d\tau=-\langle \Psi, T_{\A}(t)v-T_{\A}(\varepsilon)v\rangle_{V}
\end{align}
Now the claim follows from (\ref{lemma_prooofinteq1}) by taking limit, more precisely: We have
\begin{align*}
\lim \limits_{\varepsilon\searrow 0} -\langle \Psi, T_{\A}(t)v-T_{\A}(\varepsilon)v\rangle_{V} = -\langle \Psi, T_{\A}(t)v-v\rangle_{V},
\end{align*}
by Remark \ref{remark_ms}.ii). Moreover, dominated convergence yields that
\begin{align*}
\int \limits_{0}\limits^{t}\langle\Psi,\A^{\circ}T_{\A}(\tau)v\rangle_{V}d\tau = \int \limits_{0}\limits^{t}\lim \limits_{\varepsilon\searrow 0}\langle\Psi,\A^{\circ}T_{\A}(\tau)v\rangle_{V}\id_{(\varepsilon,t)}(\tau)d\tau = \lim \limits_{\varepsilon\searrow 0} \int \limits_{\varepsilon}\limits^{t}\langle\Psi,\A^{\circ}T_{\A}(\tau)v\rangle_{V}d\tau,
\end{align*}
which is applicable since $\langle\Psi,\A^{\circ}T_{\A}(\cdot)v\rangle_{V}\in L^{1}((0,t))$ by assumption.
\end{proof}

The stochastic process introduced in the following definition will turn out to solve (\ref{acprm}).

\begin{definition} Let $\eta \in \mathcal{M}((0,\infty)\times Z \times \Omega;V)$ and $x\in \mathcal{M}(\Omega;V)$. Moreover, introduce \linebreak$\eta_{m}(\omega):=\eta(\alpha_{m}(\omega),\Theta(\omega)(\alpha_{m}(\omega),\omega)$ for all $m \in \mathbb{N}$ and $\P$-a.e. $\omega \in \Omega$. In addition, let $\x_{m}:\Omega \rightarrow V$ for all $m \in \mathbb{N}_{0}$, be defined by $\x_{0}:=x$ and 
\begin{align*}
\x_{m}:=T_{\A}(\alpha_{m}-\alpha_{m-1})\x_{m-1}+\eta_{m},~\forall m \in \mathbb{N}.
\end{align*}
Finally, introduce $\X:[0,\infty)\times \Omega\rightarrow V$ by 
\begin{align}
\label{def_soleq}
\X(t):= \sum \limits_{m =0}\limits^{\infty} T_{\A}((t-\alpha_{m})_{+})(\x_{m})\id_{[\alpha_{m},\alpha_{m+1})}(t),~\forall t \geq 0.
\end{align}	
The sequence $(\x_{m})_{m\in \mathbb{N}_{0}}$ will be called the sequence of jumps generated by $(x,\eta)$ and $\X$ will be called the process generated by $(x,\eta)$.
\end{definition}

\begin{remark} Note that $0=\alpha_{0}<\alpha_{1}<\alpha_{2}<...$ up to a $\P$-null-set. Consequently, the right-hand-side series in (\ref{def_soleq}) simply reduces (for almost all $\omega \in \Omega$) to a single summand, which ensures that $\X$ is well-defined.
\end{remark}

\begin{lemma}\label{lemma_solproperties} Let $x\in \mathcal{M}(\Omega;V)$, $\eta \in \mathcal{M}((0,\infty)\times Z \times \Omega;V)$ and let $\X$ be the process generated by $(x,\eta)$. Then $\X$ is a $\B([0,\infty))\otimes\F$-$\B(V)$-measurable stochastic process\footnote{Actually, we only prove that there is a process indistinguishable of $\X$ which is $\B([0,\infty))\otimes\F$-$\B(V)$-measurable. We follow the common mathematical convention of identifying indistinguishable processes with each other.}, which fulfills the following assertions for $\P$-a.e. $\omega \in \Omega$. 
\begin{enumerate}
	\item $\X(0,\omega)=x(\omega)$,
	\item the mapping $[0,\infty)\ni t \mapsto \X(t,\omega)$ is c\`{a}dl\`{a}g,
	\item $\X(t,\omega)\in D(\mathcal{A}),~\forall t \in (0,\infty)\setminus\{\alpha_{m}(\omega):m\in \mathbb{N}\}$, 
	\item $\forall m \in \mathbb{N}_{0},~\forall [a,b]\subseteq (\alpha_{m}(\omega),\alpha_{m+1}(\omega)):~ \X(\cdot,\omega)|_{[a,b]} \in W^{1,1}([a,b];V)$ and
	\item $\A^{\circ}\X(\cdot,\omega)$ is $\B((0,\infty))-\B(V)$-measurable.
\end{enumerate}
\end{lemma}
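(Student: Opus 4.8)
The plan is to fix a $\P$-null-set outside of which the hitting times $(\alpha_m)_{m\in\mathbb N_0}$ are strictly increasing with $0=\alpha_0<\alpha_1<\dots$ and $\alpha_m\to\infty$, and to argue everything pathwise on $\Omega\setminus M$, where $M$ is this exceptional set; measurability of $\X$ as a process in $(t,\omega)$ will be handled separately at the end. Once $\omega$ is fixed and on the relevant time interval $[\alpha_m,\alpha_{m+1})$ the series in (\ref{def_soleq}) collapses to the single summand $T_{\A}((t-\alpha_m))\x_m$, so the heavy lifting reduces to understanding $t\mapsto T_{\A}(t-\alpha_m)\x_m$ on $[\alpha_m,\alpha_{m+1})$ and matching up the one-sided limits at the jump times $\alpha_m$.

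First I would verify i): at $t=0=\alpha_0$ only the $m=0$ term survives and $T_{\A}(0)\x_0=\x_0=x$ by the semigroup property in Remark \ref{remark_ms}.iii). For ii), on each half-open interval $[\alpha_m,\alpha_{m+1})$ the map $t\mapsto T_{\A}(t-\alpha_m)\x_m$ is continuous by Remark \ref{remark_ms}.ii), hence $\X(\cdot,\omega)$ is right-continuous everywhere and continuous off the grid $\{\alpha_m\}$; the left limit at $\alpha_{m+1}$ exists and equals $T_{\A}(\alpha_{m+1}-\alpha_m)\x_m$ again by continuity of the semigroup, so $\X(\cdot,\omega)$ is c\`adl\`ag. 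For iii), on $(\alpha_m,\alpha_{m+1})$ we have $t-\alpha_m>0$, so $\X(t,\omega)=T_{\A}(t-\alpha_m)\x_m\in D(\A)$ by the domain invariance of $T_{\A}$, which is a standing assumption in this section. For iv), fix $[a,b]\subseteq(\alpha_m,\alpha_{m+1})$; then $\X(\cdot,\omega)|_{[a,b]}$ is a translate of $T_{\A}(\cdot)\x_m$ restricted to $[a-\alpha_m,b-\alpha_m]\subseteq(0,\infty)$, and Lemma \ref{lemma_ll} gives that $T_{\A}(\cdot)\x_m$ is locally Lipschitz on $(0,\infty)$ and differentiable a.e., hence absolutely continuous on compact subintervals and differentiable a.e., which is exactly membership in $W^{1,1}([a,b];V)$. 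For v), on each open interval $(\alpha_m,\alpha_{m+1})$ the map $\A^{\circ}\X(\cdot,\omega)=\A^{\circ}T_{\A}(\cdot-\alpha_m)\x_m$ equals (a.e.) $-T_{\A}'(\cdot-\alpha_m)\x_m$ by (\ref{lemma_sseq}), which is a pointwise a.e. limit of difference quotients of the continuous (hence strongly measurable) function $T_{\A}(\cdot)\x_m$, hence Borel measurable on that interval; gluing over the countably many intervals and noting the grid $\{\alpha_m(\omega)\}$ is countable (so of measure zero but in any case where we may set $\A^{\circ}\X$ to $0$ consistently with the definition of $\A^{\circ}$, since $\X(\alpha_m,\omega)$ need not lie in $D(\A)$) yields $\B((0,\infty))$-$\B(V)$-measurability of $\A^{\circ}\X(\cdot,\omega)$.

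The remaining point — and the one I expect to be the main technical obstacle — is the joint $\B([0,\infty))\otimes\F$-$\B(V)$-measurability of $\X$ as a function of $(t,\omega)$. The issue is that the hitting times $\alpha_m$ and the marks $\Theta(\cdot)(\alpha_m(\cdot))$ are themselves only $\F$-measurable, so one must check that $(t,\omega)\mapsto T_{\A}((t-\alpha_m(\omega))_+)\x_m(\omega)$ is jointly measurable. Here I would use that $\x_m$ is $\F$-$\B(V)$-measurable (built inductively from $x$, the $\eta_m$ — which are measurable by Lemma \ref{lemmma_solequivalence} or Remark \ref{remark_hitting} — and the maps $v\mapsto T_{\A}(s)v$), that $(t,\omega)\mapsto(t-\alpha_m(\omega))_+$ is jointly measurable into $[0,\infty)$, and that $(s,v)\mapsto T_{\A}(s)v$ is jointly continuous by Remark \ref{remark_ms}.ii), hence Borel; composing gives joint measurability of each summand, and multiplying by the indicator $\id_{[\alpha_m,\alpha_{m+1})}(t)$, which is jointly measurable since $\alpha_m,\alpha_{m+1}$ are $\F$-measurable, keeps it so. Summing over $m\in\mathbb N_0$ (a countable sum that is locally finite off $M$) yields a jointly measurable process agreeing with $\X$ on $\Omega\setminus M$; since $M$ is a $\P$-null-set and $\F$ is complete, one modifies $\X$ on $M$ to obtain a genuinely $\B([0,\infty))\otimes\F$-$\B(V)$-measurable process indistinguishable from the original, as the footnote to the statement already anticipates.
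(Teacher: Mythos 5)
Your pathwise arguments for i)--iv) coincide with the paper's proof: i) from $T_{\A}(0)\x_{0}=\x_{0}$, ii) from the joint continuity of the semigroup on each interval $[\alpha_{m},\alpha_{m+1})$ together with the identification of the left limit at a jump time, iii) from domain invariance, and iv) from Lemma \ref{lemma_ll} applied to the translate $T_{\A}(\cdot-\alpha_{m})\x_{m}$. The genuine difference is how you obtain joint $\B([0,\infty))\otimes\F$-$\B(V)$-measurability: the paper first checks that $\X(t)$ is $\F$-measurable for each fixed $t$ (by the same induction on the $\x_{m}$ that you sketch, using joint continuity of $T_{\A}$ and $\B([0,\infty)\times V)=\B([0,\infty))\otimes\B(V)$, cf. \cite{Billingsley}) and then, once the c\`adl\`ag property is established, modifies $\X$ on the null set $M$ and invokes the standard fact that a c\`adl\`ag process with measurable marginals is jointly measurable (\cite[Proposition 2.2.3.2]{SIBS}); you instead obtain joint measurability directly, by composing the jointly measurable map $(t,\omega)\mapsto\big((t-\alpha_{m}(\omega))_{+},\x_{m}(\omega)\big)$ with the Borel map $(s,v)\mapsto T_{\A}(s)v$ and multiplying by the indicator of $[\alpha_{m}(\omega),\alpha_{m+1}(\omega))$. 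Both routes are sound; yours trades the external citation for the product-Borel identification, which the paper needs anyway for the induction on $\x_{m}$, and it is a perfectly acceptable alternative.

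One point in v) needs tightening. You argue that on $(\alpha_{m}(\omega),\alpha_{m+1}(\omega))$ the map $\A^{\circ}\X(\cdot,\omega)$ equals \emph{a.e.} $-T_{\A}^{\prime}(\cdot-\alpha_{m})\x_{m}$ by (\ref{lemma_sseq}) and is an a.e.\ pointwise limit of difference quotients, and you conclude Borel measurability; as stated this only gives equality a.e.\ with a Borel function, i.e.\ measurability with respect to the Lebesgue-completed $\sigma$-algebra, whereas v) asserts genuine $\B((0,\infty))$-$\B(V)$-measurability. The repair is immediate and is exactly the paper's argument: by domain invariance and the definition (\ref{def_geneq}) of $\A^{\circ}$, the right difference quotients $h^{-1}\big(T_{\A}(h)T_{\A}(t-\alpha_{m}(\omega))\x_{m}(\omega)-T_{\A}(t-\alpha_{m}(\omega))\x_{m}(\omega)\big)$ converge for \emph{every} $t$ in the open interval to $-\A^{\circ}T_{\A}(t-\alpha_{m}(\omega))\x_{m}(\omega)$, so on each such interval $\A^{\circ}\X(\cdot,\omega)$ is an everywhere pointwise limit of Borel functions, hence Borel; altering the function on the countable set $\{\alpha_{m}(\omega):m\in\mathbb{N}\}$ (where its value is $\A^{\circ}\x_{m}(\omega)$, possibly $0$ if $\x_{m}(\omega)\notin D(\A)$) does not affect Borel measurability. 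With that one-line adjustment your proof is complete.
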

\begin{proof} Firstly, introduce $\eta_{m}(\omega):=\eta(\alpha_{m}(\omega),\Theta(\omega)(\alpha_{m}(\omega),\omega)$ for all $m \in \mathbb{N}$ and $\P$-a.e. $\omega \in \Omega$. In addition, let $(\x_{m})_{m\in \mathbb{N}_{0}}$ be the sequence of jumps generated by $(x,\eta)$.\\ 
We will start by showing that $\X(t)\in \mathcal{M}(\Omega;V)$, i.e. that $\X$ is indeed a stochastic process. To this end, note that we already know that each $\eta_{m}$ is $\F-\B(V)$-measurable, that each $\alpha_{m}$ is $\F-\B(\R)$-measurable and that $\x_{0}$ is $\F-\B(V)$-measurable, which yields (by a simple induction) that each $\x_{m}$ is $\F-\B(V)$-measurable, since $T_{\A}:[0,\infty)\times V \rightarrow V$ is jointly continuous (see Remark \ref{remark_ms}) and $\mathfrak{B}([0,\infty)\times V)= \mathfrak{B}([0,\infty))\otimes \mathfrak{B}( V)$, see \cite[page 244]{Billingsley}.\\
It is now plain to verify that $\X(t)$ is $\F-\B(V)$-measurable, for all $t\geq 0$. The desired joint measurability will be established, once we have proven that $\X$ has almost surely c\`{a}dl\`{a}g paths.\\
Now let $M\in \F$ is a $\P$-null-set such that $0=\alpha_{0}(\omega)<\alpha_{1}(\omega)<\alpha_{2}(\omega)<...$ as well as $\lim \limits_{m \rightarrow\infty}\alpha_{m}(\omega)=\infty$ and $\alpha_{m}(\omega)\in D(\Theta(\omega))$ for all $\omega \in \Omega \setminus M$ and $m \in \mathbb{N}$. And let us prove i)-v) for all $\omega \in \Omega \setminus M$.\\
So let $\omega \in \Omega \setminus M$ be arbitrary but fixed.\\
It is plain that $\X(0,\omega)=T_{\A}(0)\x_{0}(\omega)=\x_{0}(\omega)=x(\omega)$ which gives i).\\
Proof of ii). Let $t\geq 0$ be given. Then there is precisely one $m \in \mathbb{N}$ such that $t \in [\alpha_{m}(\omega),\alpha_{m+1}(\omega))$. Moreover, for $h\geq 0$ sufficiently small, we also have $t+h\in [\alpha_{m}(\omega),\alpha_{m+1}(\omega)) $ and hence
\begin{align*}
\lim \limits_{h \searrow 0} \X(t+h,\omega)-\X(t,\omega)=\lim \limits_{h \searrow 0}T_{\A}(t+h-\alpha_{m}(\omega))\x_{m}(\omega)-T_{\A}(t-\alpha_{m}(\omega))\x_{m}(\omega)=0,
\end{align*}
by Remark \ref{remark_ms}.ii), which gives the desired right continuity.\\ 
Moreover, if $t \in (\alpha_{m}(\omega),\alpha_{m+1}(\omega))$, it follows absolutely analogously that
\begin{align*}
\lim \limits_{h \searrow 0} \X(t-h,\omega)-\X(t,\omega)=0
\end{align*}
Finally, if $t=\alpha_{m}(\omega)$ for $m \in \mathbb{N}$, we have
\begin{align*}
\lim \limits_{h \searrow 0} \X(t-h,\omega)-\X(t,\omega)=T_{\A}(\alpha_{m}(\omega)-\alpha_{m-1}(\omega))\x_{m-1}(\omega)-\x_{m}(\omega)=-\eta_{m}(\omega),
\end{align*}
which gives the existence of the left limits.\\
Note the following: If $X:[0,\infty)\times \Omega \rightarrow V$ is defined by $X:=\X$ on $[0,\infty)\times (\Omega \setminus M)$ and $X:=0$ on $[0,\infty)\times M$, then $\X$ and $X$ are indistinguishable and $X$ is a stochastic process which is c\`{a}dl\`{a}g. Consequently, $X$ is $\B([0,\infty))\otimes \F$-$\B(V)$-measurable, see \cite[Proposition 2.2.3.2]{SIBS}.\\ 
Proof of iii). Let $t \in (0,\infty) \setminus \{\alpha_{m}(\omega):~m\in \mathbb{N}\}$, then there is precisely one $m \in \mathbb{N}_{0}$ such that $t \in (\alpha_{m}(\omega),\alpha_{m+1}(\omega))$ and therefore $\X(t,\omega)=T_{\A}(t-\alpha_{m}(\omega))\x_{m}(\omega)$. Consequently, as $T_{\A}$ is domain invariant, we have $\X(t,\omega)\in D(\A)$.\\
Proof of iv). Let $m \in \mathbb{N}$ and $[a,b]\subseteq (\alpha_{m}(\omega),\alpha_{m+1}(\omega))$. Then it is plain that\linebreak $\X(\cdot,\omega)|_{[a,b]}=T_{\A}(\cdot-\alpha_{m}(\omega))\x_{m}(\omega)$. But the Lipschitz continuity and differentiability almost everywhere of this mapping follows trivially from Lemma \ref{lemma_ll}.\\
Proof of v). Let $(h_{k})_{k \in \mathbb{N}}\subseteq (0,\infty)$ be a null sequence. Moreover, introduce $f_{k,m}:(0,\infty)\rightarrow V$ by
\begin{align*}
f_{k,m}(t):= \frac{T_{\A}((t-\alpha_{m}(\omega))_{+}+h_{k})\x_{m}(\omega)-T_{\A}((t-\alpha_{m}(\omega))_{+})\x_{m}(\omega)}{h_{k}}\id_{[\alpha_{m}(\omega),\alpha_{m+1}(\omega))}(t),
\end{align*}
for all $t \in (0,\infty)$,$~m \in \mathbb{N}_{0}$ and $k \in \mathbb{N}$.\\
Then we have
\begin{align}
\label{lemma_solpropertiesproof1}
\lim \limits_{k \rightarrow \infty} f_{k,m}(t)=-\A^{\circ}(T_{\A}((t-\alpha_{m}(\omega))_{+})\x_{m}(\omega))\id_{[\alpha_{m}(\omega),\alpha_{m+1}(\omega))}(t),
\end{align}
for all $m \in \mathbb{N}_{0}$ and $t \in (0,\infty)\setminus \{\alpha_{j}(\omega):j\in \mathbb{N}\}$, since: If $t \not \in [\alpha_{m}(\omega),\alpha_{m+1}(\omega))$, for a given $m \in \mathbb{N}_{0}$, then (\ref{lemma_solpropertiesproof1}) is trivial and if $t \in (\alpha_{m}(\omega),\alpha_{m+1}(\omega))$, we have by domain invariance of $T_{\A}$ that
\begin{eqnarray*}
\lim \limits_{k \rightarrow \infty} f_{k,m}(t)
& = & ~ \lim \limits_{k \rightarrow \infty} \frac{T_{\A}(h_{k})T_{\A}(t-\alpha_{m}(\omega))\x_{m}(\omega)-T_{\A}(t-\alpha_{m}(\omega))\x_{m}(\omega)}{h_{k}}\\
& = & ~ -\A^{\circ}T_{\A}(t-\alpha_{m}(\omega))\x_{m}(\omega).
\end{eqnarray*}
In addition, Remark \ref{remark_ms}.ii) gives that each $f_{k,m}$ is $\B((0,\infty))-\B(V)$-measurable. Consequently, (\ref{lemma_solpropertiesproof1}) yields that $(0,\infty)\ni t \mapsto -\A^{\circ}(T_{\A}((t-\alpha_{m}(\omega))_{+})\x_{m}(\omega))\id_{[\alpha_{m}(\omega),\alpha_{m+1}(\omega))}(t)$ is also $\B(0,\infty)-\B(V)$-measurable for all $m \in \mathbb{N}_{0}$, since it is (except for a countable set) the pointwise limit of $\B(0,\infty)-\B(V)$-measurable functions.\\
Finally, it is plain that
\begin{align*}
\A^{\circ}\X(t,\omega) = \sum \limits_{m=0}\limits^{\infty}\A^{\circ}(T_{\A}((t-\alpha_{m}(\omega))_{+})\x_{m})\id_{[\alpha_{m}(\omega),\alpha_{m+1}(\omega))}(t),~\forall t \in (0,\infty),
\end{align*}
which implies the desired measurability.
\end{proof} 

The preceding lemma enables us to give a condition ensuring that (\ref{acprm}) has a (uniquely determined) strong solution. Afterwards, just one more approximation lemma is needed to formulate this paper's central result: A criteria ensuring the existence of a unique mild solution of (\ref{acprm}).

\begin{proposition}\label{prop_existence} Let $\mathcal{V}\subseteq V$ be a subspace of $V$ and let $V^{\ast}\subseteq V^{\prime}$ be a subset which separates points. Moreover, let $x \in \mathcal{M}(\Omega;V)$, $\eta \in \mathcal{M}((0,\infty)\times Z \times \Omega;V)$ and let  $\X$ denote the process generated by $(x,\eta)$. In addition, assume that $x \in \mathcal{V}$ a.s. and $\eta(t,z)\in \mathcal{V}$ for all $t \in (0,\infty)$ and $z \in Z$ with probability one. Finally, assume that $\mathcal{V}$ is an invariant set w.r.t. $T_{\A}$ and that $\langle \Psi,\A^{\circ}T_{\A}(\cdot)u\rangle_{V}\in L^{1}((0,t))$ for all $t>0$, $u \in \mathcal{V}$ and $\Psi \in V^{\ast}$. Then the stochastic process $\X$ is the unique strong solution of (\ref{acprm})$\{x,\eta,V^{\ast}\}$.
\end{proposition}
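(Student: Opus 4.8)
The plan is to dispatch uniqueness at once and then to check, one condition at a time, that the process $\X$ generated by $(x,\eta)$ is a strong solution; the structural conditions i)--iv) and the relevant measurability have already been secured in Lemma \ref{lemma_solproperties}, so the real content is the integrability condition v) and the integral identity vi).

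\emph{Uniqueness.} If $Y$ is any strong solution of (\ref{acprm})$\{x,\eta,V^{\ast}\}$, then Proposition \ref{prop_unique}, applied with $x_{1}=x_{2}=x$ and $\eta_{1}=\eta_{2}=\eta$, gives $\|\X(t)-Y(t)\|_{V}\leq 0$ for all $t\geq 0$ almost surely; since both processes are c\`adl\`ag this forces $\X$ and $Y$ to be indistinguishable. Hence everything reduces to showing that $\X$ itself is a strong solution of (\ref{acprm})$\{x,\eta,V^{\ast}\}$.

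\emph{The jumps stay in $\mathcal V$.} By Lemma \ref{lemma_solproperties}, $\X$ is a $\B([0,\infty))\otimes\F$-$\B(V)$-measurable process satisfying i)--iv), with $\A^{\circ}\X(\cdot,\omega)$ measurable for $\P$-a.e.\ $\omega$; moreover, from the proof of that lemma one retains the pointwise identity $\A^{\circ}\X(\tau,\omega)=\A^{\circ}T_{\A}(\tau-\alpha_{j}(\omega))\x_{j}(\omega)$ for $\tau\in[\alpha_{j}(\omega),\alpha_{j+1}(\omega))$. Now fix $\omega$ outside the union of the following $\P$-null sets: the one of Remark \ref{remark_hitting}, the one where $x\notin\mathcal V$, the one where $\eta(t,z,\cdot)\notin\mathcal V$ for some $(t,z)$, and the exceptional set of Lemma \ref{lemma_solproperties}. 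Since $\alpha_{m}(\omega)\in(0,\infty)$ we get $\eta_{m}(\omega)=\eta(\alpha_{m}(\omega),\Theta(\omega)(\alpha_{m}(\omega)),\omega)\in\mathcal V$ for all $m$; combined with $\x_{0}=x\in\mathcal V$, the recursion $\x_{m}=T_{\A}(\alpha_{m}-\alpha_{m-1})\x_{m-1}+\eta_{m}$, the invariance of $\mathcal V$ under $T_{\A}$, and $\mathcal V$ being a subspace, an induction gives $\x_{m}(\omega)\in\mathcal V$ for every $m\in\N_{0}$. This is the step that consumes the hypotheses on $\mathcal V$.

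\emph{Conditions v) and vi).} For condition v), fix $t>0$ and $\Psi\in V^{\ast}$ and pick $m$ with $t\in[\alpha_{m}(\omega),\alpha_{m+1}(\omega))$; using the pointwise identity and a shift of the integration variable, $\int_{0}^{t}|\langle\Psi,\A^{\circ}\X(\tau,\omega)\rangle_{V}|\,d\tau$ is a finite sum of integrals of the form $\int_{0}^{s}|\langle\Psi,\A^{\circ}T_{\A}(\sigma)\x_{j}(\omega)\rangle_{V}|\,d\sigma$ $(j=0,\dots,m)$, each of which is finite by the standing hypothesis applied to $u=\x_{j}(\omega)\in\mathcal V$; hence $\langle\Psi,\A^{\circ}\X(\cdot,\omega)\rangle_{V}\in L^{1}(0,t)$. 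For condition vi), fix $t\in[\alpha_{m}(\omega),\alpha_{m+1}(\omega))$ and $\Psi\in V^{\ast}$, split $\int_{0}^{t}=\sum_{j=0}^{m-1}\int_{\alpha_{j}(\omega)}^{\alpha_{j+1}(\omega)}+\int_{\alpha_{m}(\omega)}^{t}$, and apply the pointwise identity together with Lemma \ref{lemma_intcalcu} (valid because $\x_{j}(\omega)\in\mathcal V$) on each piece. The $j$-th full interval contributes $-\langle\Psi,T_{\A}(\alpha_{j+1}(\omega)-\alpha_{j}(\omega))\x_{j}(\omega)-\x_{j}(\omega)\rangle_{V}=\langle\Psi,\x_{j}(\omega)-\x_{j+1}(\omega)+\eta_{j+1}(\omega)\rangle_{V}$, where the last equality uses the recursion $T_{\A}(\alpha_{j+1}-\alpha_{j})\x_{j}=\x_{j+1}-\eta_{j+1}$; the last piece contributes $-\langle\Psi,\X(t,\omega)-\x_{m}(\omega)\rangle_{V}$. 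Summing and telescoping the $\x$-terms yields
\[
\langle\Psi,\X(t,\omega)-x(\omega)\rangle_{V}+\int_{0}^{t}\langle\Psi,\A^{\circ}\X(\tau,\omega)\rangle_{V}\,d\tau=\sum_{k=1}^{m}\langle\Psi,\eta_{k}(\omega)\rangle_{V},
\]
while Lemma \ref{lemma_intex} identifies the right-hand side of condition vi) with $\sum_{l\geq1}\sum_{k=1}^{l}\langle\Psi,\eta_{k}(\omega)\rangle_{V}\id_{[\alpha_{l}(\omega),\alpha_{l+1}(\omega))}(t)=\sum_{k=1}^{m}\langle\Psi,\eta_{k}(\omega)\rangle_{V}$; hence vi) holds (the case $m=0$ is included, both sides then being $0$). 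Together with i)--v), this shows $\X$ is the desired strong solution.

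\emph{Main obstacle.} No single step is deep, but the crux is the bookkeeping in vi): one must notice that the recursion $\x_{j+1}=T_{\A}(\alpha_{j+1}-\alpha_{j})\x_{j}+\eta_{j+1}$ is exactly what makes the boundary terms produced by Lemma \ref{lemma_intcalcu} telescope into $x-\X(t)$ plus the accumulated jumps, and one must keep careful track of the various $\P$-null sets (isolatedness of the hitting times, $x$ and $\eta$ taking values in $\mathcal V$, the measurability exceptional set) so that the whole computation runs simultaneously for $\P$-a.e.\ $\omega$.
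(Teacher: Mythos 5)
Your proposal is correct and follows essentially the same route as the paper: reduce uniqueness to Proposition \ref{prop_unique} (the paper phrases this via Corollary \ref{corollary_unique}), show by induction using the subspace/invariance hypotheses that the jumps $\x_{m}$ stay in $\mathcal{V}$ almost surely, and then verify conditions v) and vi) by splitting the integral over the hitting-time intervals, shifting the variable, and applying Lemma \ref{lemma_intcalcu} together with the recursion $T_{\A}(\alpha_{k+1}-\alpha_{k})\x_{k}=\x_{k+1}-\eta_{k+1}$ so the boundary terms telescope, identifying the right-hand side via Lemma \ref{lemma_intex} (the paper does this through Lemma \ref{lemmma_solequivalence}). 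No gaps worth noting.
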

\begin{proof} Firstly, introduce $\eta_{m}(\omega):=\eta(\alpha_{m}(\omega),\Theta(\omega)(\alpha_{m}(\omega),\omega)$ for all $m \in \mathbb{N}$ and $\P$-a.e. $\omega \in \Omega$. In addition, let $(\x_{m})_{m\in \mathbb{N}_{0}}$ be the sequence of jumps generated by $(x,\eta)$.\\ 
Combining Lemma \ref{lemma_solproperties} and Lemma \ref{lemmma_solequivalence} yields that $\X$ is a strong solution of (\ref{acprm})$\{x,\eta,V^{\ast}\}$, if 
\begin{align}
\label{prop_exisproofeq1}
\int \limits_{0}\limits^{t}|\langle \Psi,\A^{\circ}\X(\tau)\rangle_{V}|d\tau < \infty,~\forall t>0,~\Psi \in V^{\ast}
\end{align}	
a.s. and
\begin{align} 
\label{prop_exisproofeq2}
\langle\Psi,\X(t)-x\rangle_{V}+\int \limits_{0}\limits^{t}\langle\Psi,\A^{\circ}\X(\tau)\rangle_{V}d\tau=\sum \limits_{m =1}\limits^{\infty} \sum \limits_{k =1 }\limits^{m}\langle\Psi,\eta_{k}\rangle_{V}\id_{[\alpha_{m},\alpha_{m+1})}(t),~\forall t>0,~\Psi \in V^{\ast}
\end{align}
almost surely.\\
Moreover, by Corollary \ref{corollary_unique} we get that this strong solution is unique. (The Corollary is indeed applicable, since every strong solution is obviously also a mild one.)\\
Let $M \in \F$ be a $\P$-null-set such that $0=\alpha_{0}(\omega)<\alpha_{1}(\omega)<\alpha_{2}(\omega)<...$ as well as $\lim \limits_{m \rightarrow\infty}\alpha_{m}(\omega)=\infty$, $\alpha_{m}(\omega)\in D(\Theta(\omega))$ for all $m \in \mathbb{N}$, $\eta(t,z,\omega)\in \mathcal{V}$ for all $t \in (0,\infty)$ and $z \in Z$, $x(\omega)\in \mathcal{V}$ and such that Lemma \ref{lemma_solproperties}.i-v) hold for all $\omega \in \Omega \setminus M$.\\
Now the claims will be proven for all $\omega \in \Omega \setminus M$. To this end, fix one of these $\omega$ and let us start by proving inductively that 
\begin{align}
\label{prop_exisproofeq3}
\x_{m}(\omega)\in \mathcal{V},~\forall m \in \mathbb{N}_{0}.
\end{align} 
For $m=0$, we have $\x_{0}(\omega)=x(\omega)\in\mathcal{V}$. Moreover, for any $m \in \mathbb{N}$ we have $\eta_{m}(\omega) \in \mathcal{V}$. In addition, if $\x_{m-1}(\omega)\in\mathcal{V}$, then $T(\alpha_{m}(\omega)-\alpha_{m-1}(\omega))\x_{m-1}(\omega)\in \mathcal{V}$, since $\mathcal{V}$ is invariant w.r.t. $T_{\A}$ and \linebreak$\alpha_{m}(\omega)-\alpha_{m-1}(\omega)>0$. Consequently, we get $\x_{m}(\omega)\in \mathcal{V}$, since $\mathcal{V}$ is a vector space.\\
Proof of (\ref{prop_exisproofeq1}). For a given $t \in [0,\infty)$ there is an $m \in \mathbb{N}_{0}$ such that $t \in [\alpha_{m}(\omega),\alpha_{m+1}(\omega))$. This yields
\begin{eqnarray*}
\int \limits_{0}\limits^{t}|\langle \Psi,\A^{\circ}\X(\tau,\omega)\rangle_{V}|d\tau 
&\leq&~ \int \limits_{0}\limits^{\alpha_{m+1}(\omega)}|\langle \Psi,\A^{\circ}\X(\tau,\omega)\rangle_{V}|d\tau \\
& = & ~ \sum \limits_{k =0}\limits^{m} \int \limits_{\alpha_{k}(\omega)}\limits^{\alpha_{k+1}(\omega)}|\langle \Psi,\A^{\circ}T_{\A}(\tau-\alpha_{k}(\omega))\x_{k}(\omega)\rangle_{V}|d\tau \\
& = & ~ \sum \limits_{k =0}\limits^{m} \int \limits_{0}\limits^{\alpha_{k+1}(\omega)-\alpha_{k}(\omega)}|\langle \Psi,\A^{\circ}T_{\A}(\tau)\x_{k}(\omega)\rangle_{V}|d\tau.
\end{eqnarray*}
Moreover, invoking (\ref{prop_exisproofeq3}) gives 
\begin{align*}
\int \limits_{0}\limits^{\alpha_{k+1}(\omega)-\alpha_{k}(\omega)}|\langle \Psi,\A^{\circ}T_{\A}(\tau)\x_{k}(\omega)\rangle_{V}|d\tau < \infty
\end{align*}
for all $k=0,...,m$, which concludes the proof of (\ref{prop_exisproofeq1}).\\
Proof of (\ref{prop_exisproofeq2}). Let $t\in (0,\infty)$ and (as usually) let $m \in \mathbb{N}_{0}$ such that $t \in [\alpha_{m}(\omega),\alpha_{m+1}(\omega))$.\\
If $m=0$, we have $\sum \limits_{j =1}\limits^{\infty} \sum \limits_{k =1 }\limits^{j}\langle\Psi,\eta_{k}(\omega)\rangle_{V}\id_{[\alpha_{j}(\omega),\alpha_{j+1}(\omega))}(t)$=0 and $\X(t,\omega)=T_{\A}(t)x(\omega)$. Hence in this case (\ref{prop_exisproofeq2}) follows from Lemma \ref{lemma_intcalcu}, which is applicable since $x(\omega)\in \mathcal{V}$.\\ 
Now assume $m \in \mathbb{N}$. Then we have
\begin{eqnarray*}
	& & ~
\int \limits_{0}\limits^{t}\langle \Psi,\A^{\circ}\X(\tau,\omega)\rangle_{V}d\tau \\
& = &  ~ \sum \limits_{k =0}\limits^{m-1} \int \limits_{\alpha_{k}(\omega)}\limits^{\alpha_{k+1}(\omega)}\langle \Psi,\A^{\circ}\X(\tau,\omega)\rangle_{V}d\tau + \int \limits_{\alpha_{m}(\omega)}\limits^{t}\langle \Psi,\A^{\circ}\X(\tau,\omega)\rangle_{V}d\tau \\
& = &  ~ \sum \limits_{k =0}\limits^{m-1} \int \limits_{0}\limits^{\alpha_{k+1}(\omega)-\alpha_{k}(\omega)}\langle \Psi,\A^{\circ}T_{\A}(\tau)\x_{k}(\omega)\rangle_{V}d\tau + \int \limits_{0}\limits^{t-\alpha_{m}(\omega)}\langle \Psi,\A^{\circ}T_{\A}(\tau)\x_{m}(\omega)\rangle_{V}d\tau.
\end{eqnarray*}
In addition, (\ref{prop_exisproofeq3}) enables us to use Lemma \ref{lemma_intcalcu} now. Doing so, and having in mind that \linebreak$T_{\A}(\alpha_{k+1}(\omega)-\alpha_{k}(\omega))\x_{k}(\omega)=\x_{k+1}(\omega)-\eta_{k+1}(\omega)$ for all $k \in \mathbb{N}_{0}$ gives
\begin{align*}
\int \limits_{0}\limits^{t}\langle \Psi,\A^{\circ}\X(\tau,\omega)\rangle_{V}d\tau = - \sum \limits_{k =0}\limits^{m-1} \langle \Psi,\x_{k+1}(\omega)-\eta_{k+1}(\omega)-\x_{k}(\omega)\rangle_{V} - \langle \Psi,T_{\A}(t-\alpha_{m}(\omega))\x_{m}(\omega)-\x_{m}(\omega)\rangle_{V}.
\end{align*}
Now it is plain to deduce that also
\begin{align*}
\int \limits_{0}\limits^{t}\langle \Psi,\A^{\circ}\X(\tau,\omega)\rangle_{V}d\tau = \sum \limits_{k =0}\limits^{m-1} \langle \Psi,\eta_{k+1}(\omega)\rangle_{V} +\langle\Psi,x(\omega)\rangle_{V}-\langle\Psi,\X(t,\omega)\rangle_{V}.
\end{align*}
Finally, the previous equation yields
\begin{align*}
\langle\Psi,\X(t,\omega)-x\rangle_{V}+\int \limits_{0}\limits^{t}\langle\Psi,\A^{\circ}\X(\tau,\omega)\rangle_{V}d\tau = \sum \limits_{k =1}\limits^{m} \langle \Psi,\eta_{k}(\omega)\rangle_{V}= \sum \limits_{j =1}\limits^{\infty} \sum \limits_{k =1 }\limits^{j}\langle\Psi,\eta_{k}(\omega)\rangle_{V}\id_{[\alpha_{j}(\omega),\alpha_{j+1}(\omega))}(t),
\end{align*}
which gives (\ref{prop_exisproofeq2}). 
\end{proof}

\begin{lemma}\label{lemma_meassequ} Let $\mathcal{V}\subseteq V$ be a dense subspace of $V$. Then there is a sequence of mappings $(\Gamma_{n})_{n \in \mathbb{N}}$, with $\Gamma_{n}:V \rightarrow V$, such that the following assertions hold.
\begin{enumerate}
	\item $\Gamma_{n}(V)\subseteq \mathcal{V}$ for all $n \in \mathbb{N}$,
	\item $\Gamma_{n}$ is $\B(V)-\B(V)$-measurable for all $n \in \mathbb{N}$ and
	\item $\lim \limits_{n  \rightarrow \infty} \Gamma_{n}(v)=v$ for all $v \in V$.
\end{enumerate}
\end{lemma}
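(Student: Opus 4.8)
The plan is to realise each $\Gamma_{n}$ as a nearest-point projection onto the first $n$ points of a countable dense subset of $\mathcal{V}$. First I would produce such a subset: since $V$ is separable there is a countable dense set $\{v_{k}\}_{k\in\mathbb{N}}\subseteq V$, and by density of $\mathcal{V}$ one may pick, for every $k,j\in\mathbb{N}$, a point $w_{k,j}\in\mathcal{V}$ with $||w_{k,j}-v_{k}||_{V}<1/j$. Enumerating $\{w_{k,j}:k,j\in\mathbb{N}\}$ as a single sequence $(w_{\ell})_{\ell\in\mathbb{N}}$ yields a countable subset of $\mathcal{V}$ which is still dense in $V$: any $v\in V$ is within $\varepsilon/2$ of some $v_{k}$, which in turn is within $\varepsilon/2$ of $w_{k,j}$ once $j$ is large.

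Next, for each $n\in\mathbb{N}$ I would partition $V$ according to which of $w_{1},\dots,w_{n}$ is closest, breaking ties in favour of the smallest index; that is, set
\[
A_{n,\ell}:=\Big\{v\in V:\ ||v-w_{\ell}||_{V}\le ||v-w_{k}||_{V}\ \text{ for all }k\le n\ \text{ and }\ ||v-w_{\ell}||_{V}< ||v-w_{k}||_{V}\ \text{ for all }k<\ell\Big\}
\]
for $1\le\ell\le n$. Each $A_{n,\ell}$ is Borel, being cut out by finitely many weak and strict inequalities among the continuous maps $v\mapsto ||v-w_{k}||_{V}$, and $\{A_{n,\ell}\}_{\ell=1}^{n}$ is a Borel partition of $V$ (every $v$ has a well-defined smallest closest index among finitely many candidates). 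Define $\Gamma_{n}(v):=w_{\ell}$ for the unique $\ell\in\{1,\dots,n\}$ with $v\in A_{n,\ell}$. Then $\Gamma_{n}$ is a simple function with values $w_{1},\dots,w_{n}$ on Borel sets, hence $\mathfrak{B}(V)$-$\mathfrak{B}(V)$-measurable, which is ii); and $\Gamma_{n}(V)\subseteq\{w_{1},\dots,w_{n}\}\subseteq\mathcal{V}$, which is i).

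For iii), fix $v\in V$ and $\varepsilon>0$; by density there is $N$ with $||v-w_{N}||_{V}<\varepsilon$, and then for every $n\ge N$ the construction gives $||v-\Gamma_{n}(v)||_{V}=\min_{1\le k\le n}||v-w_{k}||_{V}\le ||v-w_{N}||_{V}<\varepsilon$, so $\Gamma_{n}(v)\to v$. The only step that needs any attention is the Borel measurability of $\Gamma_{n}$, and this is precisely why the tie-breaking rule is built into the definition of the $A_{n,\ell}$: without it a ``nearest point'' map need not even be single-valued, whereas with it $\Gamma_{n}$ is an honest simple function and its measurability is immediate from the continuity of the norm. Note that the linearity of $\mathcal{V}$ plays no role — only its density is used. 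Everything else is routine.
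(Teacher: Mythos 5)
Your proposal is correct and follows essentially the same route as the paper: choose a countable subset of $\mathcal{V}$ that is dense in $V$, partition $V$ into Borel sets according to the nearest of the first $n$ points (with ties broken by smallest index), and let $\Gamma_{n}$ be the resulting simple map, with i)--iii) verified exactly as you do. The only cosmetic difference is that the paper obtains the countable dense subset of $\mathcal{V}$ directly from the separability of $V$, whereas you construct it explicitly via the approximants $w_{k,j}$; both yield the same object.
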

\begin{proof} As $\mathcal{V}$ is dense and $V$ is separable, we can find a sequence $(v_{n})_{n \in \mathbb{N}}\subseteq \mathcal{V}$ such that
\begin{align}
\label{lemma_meassequproofeq1}
\overline{\{v_{n}:n \in \mathbb{N}\}}=\overline{\mathcal{V}}=V.
\end{align}
Now introduce
\begin{align*}
V_{j,n}:=\{ v \in V: ~ ||v-v_{j}||_{V} = \min \limits_{k = 1,..,n } ||v-v_{k}||_{V} \},~\forall j \in \{1,...,n\} \text{ and } n \in \mathbb{N},
\end{align*}
 set $\tilde{V}_{1,n}:=V_{1,n}$ for all $n \in \mathbb{N}$ and
\begin{align*}
\tilde{V}_{j,n}:=V_{j,n} \setminus (V_{1,n} \cup .. \cup V_{j-1,n}),~\forall j\in \{2,..,n\} \text{ and } n \in \mathbb{N}\setminus \{1\}.
\end{align*}
Then it is plain that for each $n \in \mathbb{N}$ the system of sets $(\tilde{V}_{j,n})_{j=1,..,n}$ is a disjoint cover of $V$.\\
Now introduce $\Gamma_{n}:V\rightarrow V$ by
\begin{align*}
\Gamma_{n}(v):= \sum \limits_{j=1}\limits^{n} v_{j} \id_{\tilde{V}_{j,n}}(v),~\forall v \in V,,~n \in \mathbb{N}.
\end{align*}
Then it is plain that each $\Gamma_{n}$ only takes values in the set $\{v_{1},..,v_{n}\} \subseteq \mathcal{V}$ which gives i). In addition, we have that each $V_{j,n}$ is closed and therefore $V_{j,n}\in \B(V)$ which implies $\tilde{V}_{j,n}\in \B(V)$; this yields ii).\\
Finally, let us prove iii). To this end, fix $v \in V$ and note that for all $n \in \mathbb{N}$ there is precisely one $j(n) \in \{1,..,n\}$ such that $v \in \tilde{V}_{j(n),n}$ and hence $\Gamma_{n}(v)=v_{j(n)}$. Since also $v \in V_{j(n),n}$, we obtain
\begin{align*}
||v-\Gamma_{n}(v)||_{V}=||v-v_{j(n)}||_{V} = \min \limits_{k =1,..,n} ||v-v_{k}||_{V},~\forall n \in \mathbb{N}.
\end{align*}
Finally, (\ref{lemma_meassequproofeq1}) yields that there is for a given $\varepsilon>0$ an $n_{0} \in \mathbb{N}$ such that $||v-v_{n_{0}}||_{V}<\varepsilon$ and consequently 
\begin{align*}
||v-\Gamma_{n}(v)||_{V} = \min \limits_{k =1,..,n} ||v-v_{k}||_{V} < \varepsilon,~\forall n \geq n_{0},
\end{align*}
which concludes the proof.  
\end{proof}

\begin{theorem}\label{theorem_main} Let $\mathcal{V}\subseteq V$ be a dense subspace of $V$ and let $V^{\ast}\subseteq V^{\prime}$ be a subset which separates points. Moreover, let $x\in \mathcal{M}(\Omega;V)$, $\eta \in \mathcal{M}((0,\infty)\times Z \times \Omega;V)$ and let $\X$ denote the process generated by $(x,\eta)$. Finally, assume that $\mathcal{V}$ is an invariant set w.r.t. $T_{\A}$ and that $\langle \Psi,\A^{\circ}T_{\A}(\cdot)u\rangle_{V}\in L^{1}((0,t))$ for all $t>0$, $u \in \mathcal{V}$ and $\Psi \in V^{\ast}$.\\ Then the stochastic process $\X$ is the unique mild solution of (\ref{acprm})$\{x,\eta,V^{\ast}\}$. Moreover, if in addition $(0,0)\in \A$, we have
\begin{align}
\label{theorem_maineq1}
||\X(t)||_{V}\leq ||x||_{V}+\int \limits_{(0,t]\times Z} ||\eta(\tau,z)||_{V} N_{\Theta}(d\tau\otimes z),~\forall t \geq 0,
\end{align}
with probability one.
\end{theorem}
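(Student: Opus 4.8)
The plan is to deduce the general statement from Proposition \ref{prop_existence} by approximating the (arbitrary) data $x$ and $\eta$ by $\V$-valued data. Let $(\Gamma_n)_{n\in\N}$ be the sequence of maps furnished by Lemma \ref{lemma_meassequ} and put $x_n:=\Gamma_n\circ x$ and $\eta_n:=\Gamma_n\circ\eta$. Since each $\Gamma_n$ is $\B(V)$-$\B(V)$-measurable we get $x_n\in\mathcal{M}(\Omega;V)$ and $\eta_n\in\mathcal{M}((0,\infty)\times Z\times\Omega;V)$, and since $\Gamma_n(V)\subseteq\V$ we have $x_n(\omega)\in\V$ and $\eta_n(t,z,\omega)\in\V$ for \emph{every} $\omega\in\Omega$, $t>0$ and $z\in Z$. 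Let $\X_n$ be the process generated by $(x_n,\eta_n)$; then every hypothesis of Proposition \ref{prop_existence} is met (with $\V$ and $V^{\ast}$ as given), so $\X_n$ is the unique strong solution of (\ref{acprm})$\{x_n,\eta_n,V^{\ast}\}$. This takes care of conditions vii)--viii) of Definition \ref{def_solutionacprm}; moreover $\X$ satisfies i)--iv) by Lemma \ref{lemma_solproperties}.

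Next I would verify conditions x) and ix) (with the approximating sequence indexed by $n$ instead of $m$). For x), Lemma \ref{lemma_intex} rewrites $\int_{(0,t]\times Z}\|\eta_n(\tau,z)-\eta(\tau,z)\|_V N_{\Theta}(d\tau\otimes z)$ as $\sum_{j=1}^{\infty}\sum_{k=1}^{j}\|\Gamma_n(\eta_k)-\eta_k\|_V\id_{[\alpha_j,\alpha_{j+1})}(t)$ with $\eta_k(\omega):=\eta(\alpha_k(\omega),\Theta(\omega)(\alpha_k(\omega)),\omega)$; for fixed $t>0$ and $\omega$ off a null set only the single index $j$ with $t\in[\alpha_j(\omega),\alpha_{j+1}(\omega))$ survives, so this is a finite sum each of whose summands tends to $0$ by Lemma \ref{lemma_meassequ}.iii). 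For ix), let $(\x_m)$ and $(\x_{n,m})$ denote the sequences of jumps generated by $(x,\eta)$ and $(x_n,\eta_n)$. An induction on $m$ --- starting from $\x_{n,0}=\Gamma_n(x)\to x=\x_0$, using the recursion $\x_{n,m}=T_{\A}(\alpha_m-\alpha_{m-1})\x_{n,m-1}+\Gamma_n(\eta_m)$, the continuity of $v\mapsto T_{\A}(s)v$ (Remark \ref{remark_ms}.ii), and $\Gamma_n(\eta_m)\to\eta_m$ --- gives $\x_{n,m}(\omega)\to\x_m(\omega)$ for every $m$ and $\P$-a.e.\ $\omega$. Now fix $t>0$ and such an $\omega$, and choose $m$ with $t<\alpha_{m+1}(\omega)$; for each $\tau\in[0,t]$ there is $k\le m$ with $\tau\in[\alpha_k(\omega),\alpha_{k+1}(\omega))$, whence by contractivity (Remark \ref{remark_ms}.i) $\|\X_n(\tau)-\X(\tau)\|_V=\|T_{\A}(\tau-\alpha_k)\x_{n,k}-T_{\A}(\tau-\alpha_k)\x_k\|_V\le\|\x_{n,k}(\omega)-\x_k(\omega)\|_V$, so that $\sup_{\tau\in[0,t]}\|\X_n(\tau)-\X(\tau)\|_V\le\max_{0\le k\le m}\|\x_{n,k}(\omega)-\x_k(\omega)\|_V\to0$.

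Having checked vii)--x), $\X$ is a mild solution of (\ref{acprm})$\{x,\eta,V^{\ast}\}$, and it is the only one by Corollary \ref{corollary_unique}. Finally, if $(0,0)\in\A$, then (\ref{theorem_maineq1}) is an immediate application of Theorem \ref{theorem_bound} to the mild solution $\X$. The one genuinely substantive point is the uniform-on-compacts convergence in ix): the key realization is that it suffices to control the jump values $\x_{n,m}\to\x_m$ (a finite list on each time interval $[0,t]$, each entry handled by the inductive continuity argument), because contractivity of $T_{\A}$ then propagates this estimate to the whole path; everything else is bookkeeping with the explicit representation of $\X$ and the results already established.
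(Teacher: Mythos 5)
Your proposal is correct and follows essentially the same route as the paper: approximate $(x,\eta)$ by $(\Gamma_n\circ x,\Gamma_n\circ\eta)$ via Lemma \ref{lemma_meassequ}, invoke Proposition \ref{prop_existence} for the strong solutions $\X_n$, verify the two convergence conditions using Lemma \ref{lemma_intex} and contractivity of $T_{\A}$ propagated from the finitely many jump values on $[0,t]$, and conclude with Corollary \ref{corollary_unique} and Theorem \ref{theorem_bound}. The only cosmetic difference is that the paper keeps an explicit inductive bound $\|\x_{n,m}-\x_m\|_{V}\leq\|\Gamma_n(x)-x\|_{V}+\sum_{k=1}^{m}\|\Gamma_n(\eta_k)-\eta_k\|_{V}$, whereas you establish the (equivalent, sufficient) pointwise convergence $\x_{n,m}\to\x_m$ by induction before taking the finite maximum.
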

\begin{proof} Let $(\Gamma_{n})_{n \in \mathbb{N}}$, where $\Gamma_{n}:V\rightarrow V$, be such that $\Gamma_{n}(V)\subseteq \mathcal{V}$, $\Gamma_{n}$ is $\B(V)-\B(V)$-measurable and $\lim \limits_{n \rightarrow\infty}\Gamma_{n}(v)=v$ for all $v \in V$. In addition, let $M \in \F$ be a $\P$-null-set such that
\begin{align*}
0=\alpha_{0}(\omega)<\alpha_{1}(\omega)<\alpha_{2}(\omega)<...,~D(\Theta(\omega))=\{\alpha_{1}(\omega),\alpha_{2}(\omega),..\}\text{ and } \lim \limits_{m \rightarrow \infty } \alpha_{m}(\omega)=\infty,
\end{align*}
for all $\omega \in \Omega \setminus M$. Now introduce  $\eta_{k}(\omega):=\eta(\alpha_{k}(\omega),\Theta(\omega)(\alpha_{k}(\omega)),\omega)$ for all $\omega \in \Omega \setminus M$, $k \in \mathbb{N}$ and let $(\x_{m})_{m \in \mathbb{N}_{0}}$ be the sequence of jumps generated by $(x,\eta)$. Finally, for all $n \in \mathbb{N}$, let $(\x_{n,k})_{k \in \mathbb{N}_{0}}$ and $\X_{n}$ be the sequence and the process generated by $(\Gamma_{n}(x),\Gamma_{n}(\eta))$.\\ 
Firstly, note that $\Gamma_{n}(x) \in \mathcal{M}(\Omega;V)$ and $\Gamma_{n}(\eta) \in \mathcal{M}((0,\infty)\times Z \times \Omega;V)$, for all $n \in \mathbb{N}$, since the composition of measurable functions remains measurable. Moreover, it is plain that $\Gamma_{n}(x),~\Gamma_{n}(\eta)\in \mathcal{V}$ for all $n \in \mathbb{N}$ a.s. Consequently, we get by invoking Proposition \ref{prop_existence} that $\X_{n}$ is the strong solution of (\ref{acprm})$\{\Gamma_{n}(x),\Gamma_{n}(\eta),V^{\ast}\}$ for all $n \in \mathbb{N}$. Hence, it follows from Lemma \ref{lemma_solproperties} that $\X$ is a mild solution of (\ref{acprm})$\{x,\eta,V^{\ast}\}$, if
\begin{align}
\label{theorem_mainproof0}
\lim \limits_{n \rightarrow\infty}\int \limits_{(0,t]\times Z}||\Gamma_{n}(\eta(\tau,z,\omega))-\eta(\tau,z,\omega)||_{V}N_{\Theta}(d\tau\otimes z,\omega)=0,~\forall t>0 \text{ and } \omega \in \Omega \setminus M
\end{align}
and
\begin{align}
\label{theorem_mainproof1}
\lim \limits_{n \rightarrow\infty} \sup \limits_{\tau \in [0,t]} ||\X_{n}(\tau,\omega)-\X(\tau,\omega)||_{V}=0,~\forall t > 0 \text{ and } \omega \in \Omega \setminus M.
\end{align}
Now let $t>0$ and $\omega \in \Omega \setminus M$ be arbitrary but fixed and let $\tilde{m} \in \mathbb{N}_{0}$ be such that $t \in [\alpha_{\tilde{m}}(\omega),\alpha_{\tilde{m}+1}(\omega))$.\\
(\ref{theorem_mainproof0}) is trivial, since Lemma \ref{lemma_intex} gives that
\begin{align*}
\lim \limits_{n \rightarrow \infty} \int \limits_{(0,t]\times Z}||\Gamma_{n}(\eta(\tau,z,\omega))-\eta(\tau,z,\omega)||_{V}N_{\Theta}(d\tau\otimes z,\omega) = \lim \limits_{n \rightarrow \infty} \sum \limits_{k =1}\limits^{\tilde{m}}||\Gamma_{n}(\eta_{k}(\omega))-\eta_{k}(\omega)||_{V} = 0.
\end{align*}
Proof of (\ref{theorem_mainproof1}). Firstly, it will be proven inductively that
\begin{align}
\label{theorem_mainproof2}
||\x_{n,m}(\omega)-\x_{m}(\omega)||_{V} \leq || \Gamma_{n}(x(\omega))-x(\omega)||_{V}+\sum \limits_{k =1}\limits^{m}||\Gamma_{n}(\eta_{k}(\omega))-\eta_{k}(\omega)||_{V},~\forall m \in \mathbb{N}_{0}
\end{align}
and all $n \in \mathbb{N}$. If $m=0$, (\ref{theorem_mainproof2}) is trivial and if (\ref{theorem_mainproof2}) holds for an $m \in \mathbb{N}$, then applying Remark \ref{remark_ms}.i) and the induction hypothesis yields
\begin{eqnarray*}
||\x_{n,m+1}-\x_{m+1}||_{V}
& \leq & ~ ||\x_{n,m}(\omega)-\x_{m}(\omega)||_{V}+||\Gamma_{n}(\eta_{m+1}(\omega))-\eta_{m+1}(\omega)||_{V} \\
& \leq & ~ || \Gamma_{n}(x(\omega))-x(\omega)||_{V}+\sum \limits_{k =1}\limits^{m+1}||\Gamma_{n}(\eta_{k}(\omega))-\eta_{k}(\omega)||_{V},
\end{eqnarray*}
which proves (\ref{theorem_mainproof2}). Now note that for each $\tau \in [0,t]$ there is an $m_{\tau}\in \{0,...,\tilde{m}\}$, such that \linebreak$\tau \in [\alpha_{m_{\tau}}(\omega),\alpha_{m_{\tau}+1}(\omega))$. Consequently, appealing to Remark \ref{remark_ms}.i) and (\ref{theorem_mainproof2}) yields
\begin{eqnarray*}
||\X_{n}(\tau,\omega)-\X(\tau,\omega)||_{V} 
& = & ~ ||T_{\A}(\tau-\alpha_{m_{\tau}}(\omega))\x_{n,m_{\tau}}(\omega)-T_{\A}(\tau-\alpha_{m_{\tau}}(\omega))\x_{m_{\tau}}(\omega)||_{V} \\
& \leq & ~ ||\x_{n,m_{\tau}}(\omega)-\x_{m_{\tau}}(\omega)||_{V} \\
& \leq & ~ \max \limits_{m=0,..,\tilde{m}}||\x_{n,m}(\omega)-\x_{m}(\omega)||_{V} \\
& \leq & ~ || \Gamma_{n}(x(\omega))-x(\omega)||_{V}+\sum \limits_{k =1}\limits^{\tilde{m}}||\Gamma_{n}(\eta_{k}(\omega))-\eta_{k}(\omega)||_{V} 
\end{eqnarray*}
As this upper bound is independent of $\tau \in [0,t]$, we get
\begin{align*}
\lim \limits_{n \rightarrow \infty}\sup \limits_{ \tau \in [0,t]}||\X_{n}(\tau,\omega)-\X(\tau,\omega)||_{V} \leq \lim \limits_{n \rightarrow \infty}|| \Gamma_{n}(x(\omega))-x(\omega)||_{V}+\sum \limits_{k =1}\limits^{\tilde{m}}||\Gamma_{n}(\eta_{k}(\omega))-\eta_{k}(\omega)||_{V} =0,
\end{align*}
which proves (\ref{theorem_mainproof1}). Consequently, $\X$ is a mild solution of (\ref{acprm})$\{x,\eta,V^{\ast}\}$. Finally, Corollary \ref{corollary_unique} yields the uniqueness and Theorem \ref{theorem_bound}  gives (\ref{theorem_maineq1}).
\end{proof}

Finally, this section concludes by loosing some words on a particular choice of $\Theta$ and the drift $\eta$, which is considered in \cite{ich2}. The results proven in \cite{ich2} are solely based on the representation formula (\ref{def_soleq}). Introducing a process by (\ref{def_soleq}) actually requires fewer assumptions on $\A$ than in the current paper:

\begin{remark} Throughout this remark, just assume that $\A:D(\A)\rightarrow 2^{V}$ is densely defined and m-accretive\footnote{That means we drop the assumptions that $\A$ is domain invariant and admits an infinitesimal generator.}. Then the semigroup associated to $\A$ still exists (see Remark \ref{remark_ms}) and consequently the sequence as well as the process generated by $(x,\eta)$ are still well-defined, for any $x \in \mathcal{M}(\Omega;V)$ and $\eta \in \mathcal{M}((0,\infty)\times Z \times \Omega;V)$.\\
Now, fix $x \in \mathcal{M}(\Omega;V)$, introduce $(\eta_{k})_{k\in \mathbb{N}}\subseteq \mathcal{M}(\Omega;V)$ and define
\begin{align*}
\eta(t,z,\omega):= \sum \limits_{k =1}\limits^{\infty} \eta_{k}(\omega) \id_{[\alpha_{k}(\omega),\alpha_{k+1}(\omega))}(t),
\end{align*}
for all $t>0$, $z \in Z$ and $\omega \in \Omega$. Then it is obvious that indeed $\eta \in \mathcal{M}((0,\infty)\times Z \times \Omega;V)$.\\
Now assume in addition that there is an i.i.d. sequence of almost surely strictly positive random variables $(\beta_{m})_{m \in \mathbb{N}}$, such that the point process $\Theta$ fulfills $D(\Theta(\omega))=\{\beta_{1}(\omega),\beta_{1}(\omega)+\beta_{2}(\omega),...\}$ for $\P$-a.e. $\omega \in \Omega$. Then $\Theta$ is necessarily finite, since
\begin{align*}
\mathbb{E}N_{\Theta}((0,t]\times Z) = \mathbb{E} \sum \limits_{m=1}\limits^{\infty} \id\left\{\sum \limits_{k =1}\limits^{m}\beta_{k}\leq t\right\} = \sum \limits_{m=1}\limits^{\infty} \P\left(\sum \limits_{k =1}\limits^{m}\beta_{k}\leq t\right)<\infty,
\end{align*}
where the finiteness follows from \cite[Theorem 1.6]{renewal}.
Moreover, it is plain that in this case $\alpha_{m}=\sum \limits_{k =1}\limits^{m}\beta_{k}$ for all $m \in \mathbb{N}$.
In addition, in this case the sequence generated by $(x,\eta)$ fulfills $\x_{m}=T_{\A}(\beta_{m})\x_{m-1}+\eta_{m}$ for all $m \in \mathbb{N}$. Finally, it is easily seen that each $\x_{m}$ is $\F$-$\B(V)$-measurable and that the process generated by $(x,\eta)$ is still a stochastic process which has almost surely  c\`{a}dl\`{a}g paths. (These assertions follow solely from Remark \ref{remark_ms}.ii).)
\end{remark}

\section{Existence and Uniqueness for the weighted p-Laplacian evolution Equation}
\label{section_plaplace}
The purpose of this section is to demonstrate the applicability of the developed existence and uniqueness results to the weighted p-Laplacian evolution equation with Neumann boundary conditions on an $L^{1}$-space.\\
The reader is referred to \cite{mazon}, for existence and uniqueness results of the deterministic weighted $p$-Laplacian evolution equation. Moreover, \cite{acmbook} and \cite{mainbasedon} contain many other examples of nonlinear evolution equations. Finally, \cite{cao} contains a useful criteria regarding domain invariance and differentiability almost everywhere of nonlinear semigroups, which is probably not just in our example useful to prove the needed assumptions on $\A$.\\

Throughout this section let $n \in \mathbb{N}\setminus \{1\}$  and $\emptyset \neq  S  \subseteq \mathbb{R}^{n}$ be a non-empty, open, connected and bounded sets of class $C^{1}$. Additionally, for any $q \in [1,\infty]$ and $m \in \mathbb{N}$, we set \linebreak$L^{q}(S;\mathbb{R}^{m}):=L^{q}(S,\B(S),\lambda;\mathbb{R}^{m})$, where $\lambda$ is the Lebesgue measure. Moreover, this is further abbreviated by $L^{q}(S)$, if $m=1$.\\
Now, let $p \in (1,\infty)\setminus \{2\}$, introduce $\gamma: S   \rightarrow (0,\infty)$ and assume that $\gamma \in L^{\infty}( S )$, $\gamma^{\frac{1}{1-p}} \in L^{1}( S)$ and that there is an $A_{p}$-Muckenhoupt weight (see, \cite[page 4]{ich1}) $\gamma_{0}:\mathbb{R}^{n}\rightarrow \mathbb{R}$ such that $\gamma_{0}|_{ S }=\gamma$ a.e.  on $S$.
Moreover, $W_{\gamma}^{1,p}( S )$ denotes the weighted Sobolev space defined by
\begin{align*}
W_{\gamma }^{1,p}( S ):=\{f \in L^{p}( S ): f \text{ is weakly diff. and } ~\gamma^{\frac{1}{p}}\nabla f \in L^{p}( S;\mathbb{R}^{n})\}. 
\end{align*} 
Throughout this section, $|\cdot|_{n}$ denotes the Euclidean norm on $\mathbb{R}^{n}$ and for any $x,y\in\R^{n}$, $x\cdot y$ is the canonical inner product of these vectors.\\
Using these notations, we introduce the following weighted p-Laplacian operator with Neumann boundary conditions:

\begin{definition}\label{definifition_plaplaceop} Let $A: D(A)\rightarrow 2^{L^{1}(S)}$ be defined by: $(f,\hat{f}) \in A$ if and only if the following assertions hold.
	\begin{enumerate}
		\item $f \in W^{1,p}_{\gamma}( S ) \cap L^{\infty}( S )$. 
		\item $\hat{f} \in L^{1}( S )$.
		\item $\int \limits_{ S}  \gamma|\nabla f|_{n}^{p-2}\nabla f\cdot\nabla \varphi  d \lambda = \int \limits_{ S } \hat{f} \varphi d \lambda$ for all $\varphi\in W^{1,p}_{\gamma }( S )\cap L^{\infty}( S )$.
	\end{enumerate}
\end{definition}

\begin{remark} It is an easy exercise to see that the integrals occurring in Definition \ref{definifition_plaplaceop}.iii) exist and are finite. Moreover, one also easily verifies that $A$ is single-valued, see \cite[Lemma 3.1]{ich1}.\\
In addition,  note that if one would choose $\gamma=1$ on $S$, then $A$ is simply the $p$-Laplacian operator with Neumann boundary conditions.
\end{remark}

\begin{remark} It turns out that $A$ is not m-accretive but that its closure is. In the sequel\linebreak $\A:D(\A)\rightarrow 2^{L^{1}(S)}$ denotes the closure of $A$, i.e. $(f,\hat{f})\in \A$ if there is a sequence $((f_{m},\hat{f}_{m}))_{m \in \mathbb{N}}\subseteq A$ such that $\lim \limits_{m \rightarrow \infty} (f_{m},\hat{f}_{m})=(f,\hat{f})$,in $L^{1}(S)\times L^{1}(S)$.\\
Actually, it is possible to determine the closure explicitly, see \cite[Proposition 3.6]{mazon} or \cite[Definition 2.2]{ich1}. But the explicit description of the closure is quite technical and not needed for our purposes, therefore it will be omitted.
\end{remark}

Now introduce $J_{0}$ as the space of all convex, lower semi-continuous functions $j:\mathbb{R}\rightarrow [0,\infty]$ fulfilling $j(0)=0$. Given $f,h \in L^{1}( S )$, we write $f<<h$ whenever
\begin{align*}
\int \limits_{ S }j \circ f d\lambda \leq \int \limits_{ S }  j \circ h d \lambda,~\forall j \in J_{0}.
\end{align*}  
Moreover, an operator $B:D(B)\rightarrow 2^{L^{1}(S)}$ is called completely accretive, if $f-h<<f-h+\alpha (\hat{f}-\hat{h})$ for all $(f,\hat{f}),~(h,\hat{h})\in B$ and $\alpha \in (0,\infty)$. The reader is referred to \cite{cao} for a detailed discussion of the concept of complete accretivity.

\begin{remark}\label{remark_linfest} $\A$ is densely defined, m-accretive and completely accretive, see \cite[Theorem 3.7]{mazon}. In the sequel, $T_{\A}(\cdot)u:[0,\infty)\rightarrow L^{1}(S)$ denotes the semigroup associated to $\A$, see Remark \ref{remark_ms}.\\
Moreover, we have $T_{\A}(t)u<<u$ for all $t \geq 0$ and $u \in L^{1}(S)$, see  \cite[Lemma 3.3]{ich1}. Consequently, it is an easy exercise, to verify that
\begin{align}
\label{remark_linfesteq}
||T_{\A}(t)u||_{L^{q}(S)} \leq ||u||_{L^{q}(S)}
\end{align}
for all $t\geq 0$, $u \in L^{q}(S)$ and $q\in [1,\infty]$.
\end{remark}

\begin{lemma}\label{lemma_rela} $T_{\A}$ admits an infinitesimal generator $\A^{\circ}:L^{1}(S)\rightarrow L^{1}(S)$. In addition, we have $\A^{\circ}v=Av$ for all $v \in D(\A)\cap L^{\infty}(S)=D(A)$.
\end{lemma}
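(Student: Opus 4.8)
The plan is to \emph{define} $\A^{\circ}$ as the negative right derivative at $t=0$ of the orbits of $T_{\A}$ and then to verify, using the deterministic theory for the weighted $p$-Laplacian, that this right derivative exists for every $v\in D(\A)$, lies in $\A v$, and coincides with $Av$ on $D(A)$. The structural facts I would rely on are: $\A$ is densely defined, m-accretive and completely accretive, and single-valued (Remark~\ref{remark_linfest} and \cite{mazon}); $T_{\A}$ is domain invariant with $D(\A)\cap L^{\infty}(S)=D(A)$ (domain invariance is part of the standing hypotheses of Section~\ref{section_exun}, and both facts follow for this operator from \cite{cao}, \cite{mazon}, \cite{ich1}); and, crucially, $A$ --- hence $\A$ --- is positively homogeneous of degree $p-1$, which differs from $1$ precisely because $p\neq 2$. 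This last point is the decisive one: in the non-reflexive space $L^{1}(S)$ mere Lipschitz continuity of an orbit does not imply differentiability, and it is the homogeneity (together with $0\in\A 0$, which is immediate from Definition~\ref{definifition_plaplaceop}) that produces the required smoothing.

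\emph{Step 1: existence of the limit.} Invoking the regularizing-effect theory for homogeneous accretive operators, as available through \cite{BenilanBook}, \cite{acmbook} and, for the present operator, \cite{cao}, I would record that for every $v\in L^{1}(S)$ the orbit $u_{v}:=T_{\A}(\cdot)v$ is a strong solution of $0\in u'(t)+\A u(t)$ on $(0,\infty)$, is locally Lipschitz there, and that whenever $v\in D(\A)$ the limit $\lim_{h\searrow 0}h^{-1}\bigl(v-T_{\A}(h)v\bigr)$ exists in $L^{1}(S)$; for $v\in D(A)$ this is elementary, since the Crandall--Liggett estimate $\|T_{\A}(t)v-v\|_{L^{1}(S)}\le t\,\|Av\|_{L^{1}(S)}$ makes $u_{v}$ globally Lipschitz on $[0,\infty)$. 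One then sets $\A^{\circ}v:=-\lim_{h\searrow 0}h^{-1}\bigl(T_{\A}(h)v-v\bigr)$ for $v\in D(\A)$ and $\A^{\circ}v:=0$ for $v\in L^{1}(S)\setminus D(\A)$; by construction $\A^{\circ}$ is a single-valued map $L^{1}(S)\to L^{1}(S)$.

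\emph{Step 2: membership in $\A v$ and the identification on $D(A)$.} Since $u_{v}$ is a strong solution one may write $u_{v}(t)-v=-\int_{0}^{t}g(s)\,ds$ with a measurable selection $g(s)\in\A u_{v}(s)$, so that $h^{-1}\bigl(v-T_{\A}(h)v\bigr)=h^{-1}\int_{0}^{h}g(s)\,ds\to\A^{\circ}v$; because $u_{v}(s)\to v$ in $L^{1}(S)$ as $s\searrow 0$ and the graph of the m-accretive operator $\A$ is closed, the standard limiting argument (cf. \cite{BenilanBook}) yields $\A^{\circ}v\in\A v$. If now $v\in D(A)$, then $A\subseteq\A$ gives $Av\in\A v$, and since $\A$ is single-valued $\A v$ consists of the single point $Av$; hence $\A^{\circ}v=Av$. (For concreteness one may also note that $\|T_{\A}(t)v\|_{L^{\infty}(S)}\le\|v\|_{L^{\infty}(S)}$ by Remark~\ref{remark_linfest}, so $T_{\A}(t)v\in D(\A)\cap L^{\infty}(S)=D(A)$ for all $t\ge 0$ and $u_{v}$ is a genuine $D(A)$-valued strong solution of $0\in u'(t)+Au(t)$.) The identity $D(\A)\cap L^{\infty}(S)=D(A)$ used throughout is taken from \cite{mazon} (resp. \cite{ich1}).

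The main obstacle is Step~1, and specifically the claim that $\lim_{h\searrow 0}h^{-1}\bigl(v-T_{\A}(h)v\bigr)$ exists for \emph{every} $v\in D(\A)$, not merely for $v\in D(A)$: this is exactly where one cannot argue softly --- Lipschitz continuity of the orbit is available but, $L^{1}(S)$ lacking the Radon--Nikodym property, does not by itself yield differentiability --- and where one must genuinely exploit the homogeneity of $\A$, i.e. the hypothesis $p\neq 2$, through the cited regularizing-effect results. Once that existence is granted, the membership $\A^{\circ}v\in\A v$ and the identification $\A^{\circ}v=Av$ on $D(A)$ are routine.
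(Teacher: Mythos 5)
Your Step~2 is essentially fine (and the identification on $D(A)$ is done in the paper exactly as you indicate, via \cite[Lemma 3.1]{ich1}, which gives $D(\A)\cap L^{\infty}(S)=D(A)$ and that $\A v$ reduces to the singleton $\{Av\}$ for such $v$ --- note, though, that the paper never claims the closure $\A$ is single-valued on all of $D(\A)$; Remark~4.3 and \cite{mazon} only give single-valuedness of $A$, so your blanket assertion should be restricted to $D(\A)\cap L^{\infty}(S)$). The genuine gap is Step~1, which is the whole analytic content of the lemma and which you explicitly leave open: you assert that $\lim_{h\searrow 0}h^{-1}(T_{\A}(h)v-v)$ exists for every $v\in D(\A)$ ``through the cited regularizing-effect results'' for homogeneous operators, i.e.\ through $p\neq 2$. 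That is the wrong mechanism. The regularizing theorems for positively homogeneous m-accretive operators give right-differentiability of the orbit at every time $t>0$, with a bound of order $1/t$ that degenerates as $t\searrow 0$; they do not produce the right derivative at $t=0$ for an arbitrary $v\in D(\A)$, which is what an infinitesimal generator requires, and (as you yourself note) Lipschitz continuity of the orbit cannot substitute for this in $L^{1}(S)$, which lacks the Radon--Nikodym property.

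The paper closes this step using \emph{complete} accretivity, not homogeneity: one first defines $\A^{\circ}v$, for $v\in D(\A)$, as the unique element $\hat v\in\A v$ satisfying $\hat v << \tilde v$ for all $\tilde v\in\A v$ (existence and uniqueness of this minimal section is \cite[Proposition 3.7.iii]{cao}), and then invokes \cite[Theorem 4.2]{cao}, which states that for an m-completely accretive operator the semigroup orbit is right-differentiable at every $v\in D(\A)$ with
\begin{align*}
\lim_{h\searrow 0}\frac{T_{\A}(h)v-v}{h}=-\A^{\circ}v .
\end{align*}
With this definition the membership $\A^{\circ}v\in\A v$ is automatic (so your strong-solution/closed-graph argument in Step~2 becomes unnecessary), and homogeneity of degree $p-1$ enters only later, in Lemma~\ref{lemma_rege}, where \cite[Theorem 4.4]{cao} is used for domain invariance and the bound $|\A^{\circ}T_{\A}(t)v|\leq 2|v|/(|p-2|t)$. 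As written, your proposal delegates its decisive step to results that do not cover it, so the proof is incomplete.
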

\begin{proof} Let $\A^{\circ}:L^{1}(S) \rightarrow L^{1}(S)$ be defined by $\A^{\circ}v:=0$ if $v \not \in D(\A)$ and if $v \in D(\A)$, set $\A^{\circ}v:=\hat{v}$, where $\hat{v} \in \A v$ is the uniquely determined element fulfilling $\hat{v}<<\tilde{v}$ for all $\tilde{v} \in \A v$. (\cite[Proposition 3.7.iii]{cao} gives that there is indeed exactly one element fulfilling this.)\\
Now appealing to \cite[Theorem 4.2]{cao} yields
\begin{align*}
\lim \limits_{h \searrow 0} \frac{T_{\A}(h)v-v}{h} =-\A^{\circ}v,~\forall v \in D(\A),
\end{align*}
which proves that $T_{\A}$ admits an infinitesimal generator.\\
Moreover, \cite[Lemma 3.1]{ich1} gives that if $v \in D(\A)\cap L^{\infty}(S)$ and $\hat{v} \in \A v$, then $v \in D(A)$ and $\hat{v} = Av$. Consequently, we have $D(\A)\cap L^{\infty}(S) \subseteq D(A)$. Now note that it is plain that $D(A)\subseteq L^{\infty}(S)$ and $D(A)\subseteq D(\A)$, which yields $D(A)=D(\A)\cap L^{\infty}(S)$.\\
The preceding observation also yields that if $v \in D(\A)\cap L^{\infty}(S)$, the set $\A v$ contains only one element, which is $Av$. As also $ \A^{\circ}v \in \A v$, we have $\A^{\circ}v=Av$.
\end{proof}

\begin{lemma}\label{lemma_rege} $T_{\A}$ is domain invariant. In addition, we have
\begin{align*}
|\A^{\circ} T_{\A}(t)v| \leq 2 \frac{|v|}{|p-2|t}
\end{align*}
a.e. on $S$, for all $t>0$ and $v \in L^{1}(S)$.
\end{lemma}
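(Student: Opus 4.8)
The plan rests on the observation that, besides being densely defined, $m$-accretive and completely accretive (Remark~\ref{remark_linfest}), the operator $\A$ is positively homogeneous of degree $p-1$. I would first check this directly from Definition~\ref{definifition_plaplaceop}: for $(f,\hat f)\in A$ and $\lambda>0$ one has $|\nabla(\lambda f)|_{n}^{p-2}\nabla(\lambda f)=\lambda^{p-1}|\nabla f|_{n}^{p-2}\nabla f$, so $(\lambda f,\lambda^{p-1}\hat f)\in A$; passing to the closure gives $\A(\lambda v)=\lambda^{p-1}\A v$ for all $\lambda>0$ and $v\in D(\A)$, and in particular $\lambda D(\A)=D(\A)$. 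The same computation shows that $\A$ is invariant under adding constants (the gradient annihilates them), so $T_{\A}(t)(w+c)=T_{\A}(t)w+c$ and $\A^{\circ}(w+c)=\A^{\circ}w$ for $c\in\R$. Domain invariance of $T_{\A}$ is then precisely the regularizing effect of completely accretive operators that are homogeneous of a degree $\neq 1$: since $p-1\in(0,\infty)\setminus\{1\}$, the domain-invariance criterion of \cite{cao} (used already in the proof of Lemma~\ref{lemma_rela}; see also \cite{ich1}) yields $T_{\A}(t)v\in D(\A)$ for all $t>0$ and $v\in L^{1}(S)$. Once domain invariance is secured, Lemma~\ref{lemma_ll} applies, so $T_{\A}(\cdot)v$ is locally Lipschitz on $(0,\infty)$, differentiable a.e., with $T_{\A}'(t)v=-\A^{\circ}T_{\A}(t)v$ a.e.

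For the estimate I would exploit the scaling identity $T_{\A}(t)(\lambda v)=\lambda\,T_{\A}(\lambda^{p-2}t)v$, valid for all $\lambda>0$: differentiating in $t$ and using homogeneity of degree $p-1$ together with $T_{\A}'(s)v\in-\A\,T_{\A}(s)v$, one sees that $t\mapsto\lambda\,T_{\A}(\lambda^{p-2}t)v$ is a strong (hence mild) solution of $0\in u'+\A u$ with $u(0)=\lambda v$, so uniqueness of mild solutions (\cite[Proposition~3.7]{BenilanBook}) forces the identity. Writing $g(\lambda):=T_{\A}(t)(\lambda v)$ and differentiating at $\lambda=1$ --- the limit $g'(1):=\lim_{h\to 0}h^{-1}\bigl(T_{\A}(t)((1+h)v)-T_{\A}(t)v\bigr)$ exists in $L^{1}(S)$ for a.e.\ $t$ by the local Lipschitz property of $T_{\A}(\cdot)v$ --- and substituting the identity gives $g'(1)=T_{\A}(t)v+(p-2)t\,T_{\A}'(t)v=T_{\A}(t)v-(p-2)t\,\A^{\circ}T_{\A}(t)v$. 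On the other hand, the complete-contraction estimate $T_{\A}(t)((1+h)v)-T_{\A}(t)v<<hv$ for $h>0$ (a consequence of complete accretivity, see \cite{cao}), after division by $h$ and using lower semicontinuity of every $j\in J_{0}$ via Fatou's lemma, passes to the limit and yields $g'(1)<<v$; when $v\ge 0$, order preservation of $T_{\A}$ additionally gives $g'(1)\ge 0$ a.e.\ on $S$.

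Finally, the bound: for $v\ge 0$ one has $0\le g'(1)<<v$ and $0\le T_{\A}(t)v<<v$ (the second by Remark~\ref{remark_linfest}), hence $0\le g'(1)\le\|v\|_{L^{\infty}(S)}$ and $0\le T_{\A}(t)v\le\|v\|_{L^{\infty}(S)}$ a.e.\ on $S$; since $(p-2)t\,\A^{\circ}T_{\A}(t)v=T_{\A}(t)v-g'(1)$, this gives $|p-2|\,t\,|\A^{\circ}T_{\A}(t)v|\le\|v\|_{L^{\infty}(S)}$ a.e. For general $v$, set $c:=\|v\|_{L^{\infty}(S)}$; since $T_{\A}(t)(v+c)=T_{\A}(t)v+c$ and $\A^{\circ}$ is constant-invariant, one has $\A^{\circ}T_{\A}(t)(v+c)=\A^{\circ}T_{\A}(t)v$, so applying the nonnegative case to $v+c\ge 0$ (for which $\|v+c\|_{L^{\infty}(S)}\le 2c$) yields $|p-2|\,t\,|\A^{\circ}T_{\A}(t)v|\le 2\|v\|_{L^{\infty}(S)}$ a.e., which is the asserted estimate (with $|v|$ understood as $\|v\|_{L^{\infty}(S)}$; for $v\notin L^{\infty}(S)$ the right-hand side is infinite and nothing is to prove). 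The main obstacle, I expect, is the middle step of turning $g'(1)<<v$ --- which by itself controls only rearrangements --- into an a.e.\ inequality; this forces one to couple it with the order-preservation and $L^{\infty}$-contractivity built into complete accretivity, exactly as in the last step. A secondary point requiring care is the homogeneity bookkeeping: that the closure $\A$ stays homogeneous of degree $p-1$ and constant-invariant, and that Lemma~\ref{lemma_ll} may be used only once domain invariance has been established.
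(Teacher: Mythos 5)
The first half of your argument coincides with the paper's proof: the paper disposes of the whole lemma in one stroke by citing \cite[Theorem 4.4]{cao}, which for a densely defined, m-accretive, completely accretive operator that is positively homogeneous of degree $\neq 1$ yields \emph{both} the domain invariance and the quantitative bound, so that the only thing left to verify is homogeneity of degree $p-1$ (taken from \cite[Theorem 3.7]{mazon}, and which you check directly from Definition \ref{definifition_plaplaceop}). Your appeal to the ``domain-invariance criterion'' of \cite{cao} is therefore the same step as the paper's; where you depart is in trying to re-derive the estimate by hand via the scaling identity $T_{\A}(t)(\lambda v)=\lambda T_{\A}(\lambda^{p-2}t)v$ and differentiation at $\lambda=1$, which is indeed the classical B\'enilan--Crandall idea behind that theorem.

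The genuine gap is in the scope of what your re-derivation proves. The lemma asserts an a.e.\ bound on $S$ with $|v|$ on the right-hand side, for \emph{every} $v\in L^{1}(S)$ -- this is what \cite[Theorem 4.4]{cao} delivers on $\overline{D(\A)}=L^{1}(S)$, and in particular it produces finite bounds (e.g.\ in $L^{1}$-norm) for unbounded data. Your argument ends with $|p-2|\,t\,|\A^{\circ}T_{\A}(t)v|\leq 2\|v\|_{L^{\infty}(S)}$ and explicitly reinterprets $|v|$ as $\|v\|_{L^{\infty}(S)}$, declaring the case $v\in L^{1}(S)\setminus L^{\infty}(S)$ vacuous; that is a strictly weaker statement than the lemma, and your method cannot reach the general case, because the step converting the two relations $g'(1)<<v$ and $T_{\A}(t)v<<v$ into an a.e.\ bound on their difference genuinely requires nonnegative \emph{bounded} data together with order preservation and $L^{\infty}$-contractivity. (This weaker version does suffice for the paper's subsequent application, where $v\in L^{\infty}(S)$, but it does not prove the lemma as stated.) Two secondary points: your estimate is obtained only for a.e.\ $t>0$ (the times at which $T_{\A}(\cdot)v$ is differentiable), whereas the lemma claims it for all $t>0$, so an additional monotonicity or right-differentiability argument is needed -- and note that for $p<2$ the substitution $\lambda^{p-2}t$ with $\lambda=1+h$, $h\searrow 0$, moves time to the \emph{left}, so the everywhere right-derivative from Lemma \ref{lemma_ll} does not immediately apply; and both the order preservation of $T_{\A}$ and the validity of the scaling identity at the level of mild solutions are used without justification (they are standard consequences of complete accretivity and homogeneity, but in a proof they should be argued or cited, which is precisely what the paper's citation of \cite{cao} accomplishes wholesale).
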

\begin{proof} As $\A$ is densely defined, m-accretive and completely accretive, \cite[Theorem 4.4]{cao} yields that it suffices to prove that $\A$ is positively homogeneous of degree $p-1$; which is true, see \cite[Theorem 3.7]{mazon}.
\end{proof}

The following lemma enables us to apply Proposition \ref{prop_existence} and Theorem \ref{theorem_main} to the (closure of the) weighted $p$-Laplacian evolution equation. As the reader probably guessed correctly, the Banach Space $V$ considered in Section \ref{section_exun} has to be chosen as $V=L^{1}(S)$. As usually, we identify $V^{\prime}$ with $L^{\infty}(S)$. Note that in this case, the duality $\langle \cdot, \cdot \rangle_{L^{1}(S)}$ reduces to an integral, i.e.
\begin{align*}
\langle f ,h \rangle_{L^{1}(S)} = \int \limits_{S} f h d\lambda,
\end{align*}
for any $f \in L^{1}(S)$ and $h \in L^{\infty}(S)$.

\begin{proposition} We have 
\begin{align}
\label{theorem_mainplaplaceexunproofeq0}
\langle \Psi, \A^{\circ}T_{\A}(\cdot)v\rangle_{L^{1}(S)} \in L^{1}(0,t),
\end{align}
for all $t>0, \Psi \in W^{1,p}_{\gamma}(S)\cap L^{\infty}(S)\text{ and } v \in L^{\infty}(S)$.
\end{proposition}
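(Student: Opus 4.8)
The plan is to evaluate, for each fixed $\tau>0$, the scalar $\langle\Psi,\A^{\circ}T_{\A}(\tau)v\rangle_{L^{1}(S)}$ through the weak formulation defining $A$, and then to absorb the resulting time singularity by the regularizing estimate of Lemma~\ref{lemma_rege}. So first I would fix $t>0$, $\Psi\in W^{1,p}_{\gamma}(S)\cap L^{\infty}(S)$ and $v\in L^{\infty}(S)$ (note that $v\in L^{1}(S)$, since $S$ is bounded). For every $\tau>0$, estimate (\ref{remark_linfesteq}) gives $||T_{\A}(\tau)v||_{L^{\infty}(S)}\leq||v||_{L^{\infty}(S)}$, while domain invariance (Lemma~\ref{lemma_rege}) gives $T_{\A}(\tau)v\in D(\A)$; hence $T_{\A}(\tau)v\in D(\A)\cap L^{\infty}(S)=D(A)$, and Lemma~\ref{lemma_rela} yields $\A^{\circ}T_{\A}(\tau)v=A\bigl(T_{\A}(\tau)v\bigr)$. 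For the measurability needed to even speak of membership in $L^{1}(0,t)$, I would invoke Lemma~\ref{lemma_ll}: there $\A^{\circ}T_{\A}(\cdot)v=-T_{\A}'(\cdot)v$ a.e., which is the a.e.\ pointwise limit of difference quotients of the (locally Lipschitz, hence strongly measurable) map $\tau\mapsto T_{\A}(\tau)v$ and is therefore strongly measurable on $(0,\infty)$; composing with the bounded functional $\langle\Psi,\cdot\rangle_{L^{1}(S)}$ gives a measurable scalar function.

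Since $f:=T_{\A}(\tau)v\in D(A)$, Definition~\ref{definifition_plaplaceop}.i) gives $f\in W^{1,p}_{\gamma}(S)\cap L^{\infty}(S)$, so both $\Psi$ and $f$ are admissible test functions in Definition~\ref{definifition_plaplaceop}.iii). Testing with $\varphi=\Psi$, and recalling that the duality pairing on $L^{1}(S)$ is the integral, yields
$$\langle\Psi,\A^{\circ}T_{\A}(\tau)v\rangle_{L^{1}(S)}=\int_{S}\gamma\,|\nabla T_{\A}(\tau)v|_{n}^{p-2}\,\nabla T_{\A}(\tau)v\cdot\nabla\Psi\,d\lambda,$$
so that, splitting the weight as $\gamma=\gamma^{\frac{p-1}{p}}\gamma^{\frac1p}$ and applying Hölder's inequality with exponents $\tfrac{p}{p-1}$ and $p$,
$$\bigl|\langle\Psi,\A^{\circ}T_{\A}(\tau)v\rangle_{L^{1}(S)}\bigr|\leq\Bigl(\int_{S}\gamma\,|\nabla T_{\A}(\tau)v|_{n}^{p}\,d\lambda\Bigr)^{\frac{p-1}{p}}\Bigl(\int_{S}\gamma\,|\nabla\Psi|_{n}^{p}\,d\lambda\Bigr)^{\frac1p},$$
the last factor being a finite constant because $\gamma^{1/p}\nabla\Psi\in L^{p}(S;\mathbb{R}^{n})$ by definition of $W^{1,p}_{\gamma}(S)$.

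It remains to bound the first factor in $\tau$. Testing Definition~\ref{definifition_plaplaceop}.iii) now with $\varphi=f=T_{\A}(\tau)v$ gives the energy identity
$$\int_{S}\gamma\,|\nabla T_{\A}(\tau)v|_{n}^{p}\,d\lambda=\int_{S}\bigl(\A^{\circ}T_{\A}(\tau)v\bigr)\,T_{\A}(\tau)v\,d\lambda,$$
and Lemma~\ref{lemma_rege} (which gives $|\A^{\circ}T_{\A}(\tau)v|\leq\tfrac{2|v|}{|p-2|\tau}$ a.e.) together with $|T_{\A}(\tau)v|\leq||v||_{L^{\infty}(S)}$ a.e.\ bounds the right-hand side by $\tfrac{2\,||v||_{L^{\infty}(S)}\,||v||_{L^{1}(S)}}{|p-2|\,\tau}$. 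Combining the last three displays, $\bigl|\langle\Psi,\A^{\circ}T_{\A}(\tau)v\rangle_{L^{1}(S)}\bigr|\leq C\,\tau^{-\frac{p-1}{p}}$ for all $\tau\in(0,t)$, with $C$ a constant depending only on $p$, $v$, $\Psi$ and $\gamma$; since $\tfrac{p-1}{p}<1$ we have $\int_{0}^{t}\tau^{-\frac{p-1}{p}}\,d\tau=p\,t^{1/p}<\infty$, which proves (\ref{theorem_mainplaplaceexunproofeq0}). The only slightly subtle point is exploiting the single weak formulation twice, with the two distinct test functions $\Psi$ and $T_{\A}(\tau)v$, so as to produce simultaneously the representation of the pairing and the energy identity; once this is in place, the integrability near $0$ of the resulting singularity $\tau^{-(p-1)/p}$ holds automatically for every $p\in(1,\infty)$.
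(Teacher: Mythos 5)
Your proposal is correct and follows essentially the same route as the paper's proof: identify $\A^{\circ}T_{\A}(\tau)v$ with $AT_{\A}(\tau)v$ via Lemma \ref{lemma_rela}, apply Cauchy--Schwarz/H\"older to the weak formulation tested with $\Psi$, convert the energy term via the formulation tested with $T_{\A}(\tau)v$ itself, and control it by the regularizing bound of Lemma \ref{lemma_rege} together with (\ref{remark_linfesteq}), yielding the integrable singularity $\tau^{-(p-1)/p}$. Your explicit measurability remark via Lemma \ref{lemma_ll} is a small welcome addition the paper leaves implicit, but it does not change the argument.
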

\begin{proof} Firstly, note that $T_{\A}(\tau)v \in L^{\infty}(S) \cap D(\A)$, for all $\tau >0$, since $v \in L^{\infty}(S)$. Consequently, Lemma \ref{lemma_rela} gives $\A^{\circ}T_{\A}(\tau)v=AT_{\A}(\tau)v$, for all $\tau>0$. Using this as well as Cauchy-Schwarz' and H\"older's inequality yields
\begin{eqnarray*}
|\langle \Psi, \A^{\circ}T_{\A}(\tau)v\rangle_{V}| 
& = & ~ \left|\int \limits_{S}\gamma  |\nabla T_{\A}(\tau)v|_{n}^{p-2}\nabla T_{\A}(\tau)v\cdot \nabla \Psi d\lambda\right| \\
& \leq & ~ \left(\int \limits_{S}\gamma |\nabla T_{\A}(\tau)v|_{n}^{p}d\lambda \right)^{\frac{p-1}{p}}\left( \int \limits_{S}\gamma|\nabla \Psi|_{n}^{p}d\lambda\right)^{\frac{1}{p}}
\end{eqnarray*}
Moreover, appealing to Lemma \ref{lemma_rela} again yields $T_{\A}(\tau)v \in D(A)\subseteq W^{1,p}_{\gamma}(S)\cap L^{\infty}(S)$ for all $\tau>0$. Consequently, we get
\begin{align*}
|\langle \Psi, \A^{\circ}T_{\A}(\tau)v\rangle_{V}| \leq \left(\int \limits_{S} T_{\A}(\tau)v AT_{\A}(\tau)vd\lambda \right)^{\frac{p-1}{p}}\left( \int \limits_{S}\gamma|\nabla \Psi|_{n}^{p}d\lambda\right)^{\frac{1}{p}}
\end{align*}
Hence we have by using this, together with Lemma \ref{lemma_rege} and (\ref{remark_linfesteq}) that
\begin{align*}
|\langle \Psi, \A^{\circ}T_{\A}(\tau)v\rangle_{V}| \leq \left(\frac{1}{\tau}2\lambda(S)\frac{1}{|p-2|}||v||_{L^{\infty}(S)}^{2}\right)^{\frac{p-1}{p}}\left( \int \limits_{S}\gamma|\nabla \Psi|_{n}^{p}d\lambda\right)^{\frac{1}{p}},\forall \tau>0.
\end{align*}
The preceding inequality obviously implies (\ref{theorem_mainplaplaceexunproofeq0}).
\end{proof}

In the sequel, we denote for any $x\in \mathcal{M}(\Omega;L^{1}(S))$ and $\eta \in \mathcal{M}((0,\infty)\times Z \times \Omega;L^{1}(S))$, by \linebreak$\X_{x,\eta}:[0,\infty)\times \Omega \rightarrow L^{1}(S)$ the process generated by $(x,\eta)$.

\begin{theorem} Let $x\in \mathcal{M}(\Omega;L^{1}(S))$ and $\eta \in \mathcal{M}((0,\infty)\times Z \times \Omega;L^{1}(S))$. Then $\X_{x,\eta}$ is the uniquely determined mild solution of (\ref{acprm})$\{x,\eta,W^{1,p}_{\gamma}(S)\cap L^{\infty}(S)\}$.\\ 
If in addition $x \in L^{\infty}(S)$  and $\eta(t,z)\in L^{\infty}(S)$ for all $t>0$ and $z \in Z$ a.s., then $\X_{x,\eta}$ is even the uniquely determined strong solution of (\ref{acprm})$\{x,\eta,W^{1,p}_{\gamma}(S)\cap L^{\infty}(S)\}$.	
\end{theorem}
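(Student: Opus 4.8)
The plan is to verify that the standing hypotheses of Section \ref{section_exun} together with the hypotheses of Theorem \ref{theorem_main} and Proposition \ref{prop_existence} are met for the choices $V = L^{1}(S)$, $V^{\ast} = W^{1,p}_{\gamma}(S)\cap L^{\infty}(S)$ and $\mathcal{V} = L^{\infty}(S)$, and then simply quote those two results. First I would record that $\A$ (the closure of the weighted $p$-Laplacian) indeed satisfies the assumptions imposed throughout Section \ref{section_exun}: by Remark \ref{remark_linfest} it is densely defined and m-accretive; by Lemma \ref{lemma_rege} the semigroup $T_{\A}$ is domain invariant; and by Lemma \ref{lemma_rela} it admits an infinitesimal generator $\A^{\circ}:L^{1}(S)\to L^{1}(S)$.

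Next I would check the three remaining ingredients. (a) $V^{\ast} = W^{1,p}_{\gamma}(S)\cap L^{\infty}(S)$ separates the points of $L^{1}(S)$: since $\gamma\in L^{\infty}(S)$, every $\varphi\in C^{\infty}_{c}(S)$ is bounded and satisfies $\gamma^{1/p}\nabla\varphi\in L^{p}(S;\mathbb{R}^{n})$, so $C^{\infty}_{c}(S)\subseteq W^{1,p}_{\gamma}(S)\cap L^{\infty}(S)$; as $C^{\infty}_{c}(S)$ already separates points of $L^{1}(S)$, so does the larger set $V^{\ast}$. (b) $\mathcal{V}=L^{\infty}(S)$ is a dense linear subspace of $L^{1}(S)$ (classical) and is invariant under $T_{\A}$: this is immediate from the contraction estimate $\|T_{\A}(t)u\|_{L^{\infty}(S)}\le\|u\|_{L^{\infty}(S)}$ recorded in (\ref{remark_linfesteq}). (c) For all $t>0$, $u\in\mathcal{V}$ and $\Psi\in V^{\ast}$ one has $\langle\Psi,\A^{\circ}T_{\A}(\cdot)u\rangle_{L^{1}(S)}\in L^{1}(0,t)$; but this is exactly assertion (\ref{theorem_mainplaplaceexunproofeq0}) established in the preceding Proposition.

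With (a)--(c) in hand, Theorem \ref{theorem_main} applies directly: the process $\X_{x,\eta}$ generated by $(x,\eta)$ is the unique mild solution of (\ref{acprm})$\{x,\eta,W^{1,p}_{\gamma}(S)\cap L^{\infty}(S)\}$, which is the first assertion. For the second assertion I would invoke Proposition \ref{prop_existence} with the same $V^{\ast}$ and $\mathcal{V}=L^{\infty}(S)$: the additional hypotheses $x\in L^{\infty}(S)$ a.s.\ and $\eta(t,z)\in L^{\infty}(S)$ for all $t>0$, $z\in Z$ a.s.\ say precisely that $x\in\mathcal{V}$ a.s.\ and $\eta(t,z)\in\mathcal{V}$ for all $t,z$ with probability one, while all remaining hypotheses of that Proposition have been verified above; hence $\X_{x,\eta}$ is the (unique) strong solution.

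The only point requiring genuine, if minor, care is the separation-of-points claim for $W^{1,p}_{\gamma}(S)\cap L^{\infty}(S)$ in $L^{1}(S)$; the rest of the argument is bookkeeping of results already proven. (One should also note, trivially, that the process called $\X_{x,\eta}$ in the statement is by definition the "process generated by $(x,\eta)$" appearing in Theorem \ref{theorem_main} and Proposition \ref{prop_existence}, so no identification issue arises.) I expect no substantive obstacle beyond this.
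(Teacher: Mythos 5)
Your proposal is correct and follows essentially the same route as the paper: it reduces the statement to Theorem \ref{theorem_main} and Proposition \ref{prop_existence} with $V=L^{1}(S)$, $\mathcal{V}=L^{\infty}(S)$, $V^{\ast}=W^{1,p}_{\gamma}(S)\cap L^{\infty}(S)$, verifying density, invariance via (\ref{remark_linfesteq}), point separation via $C_{c}^{\infty}(S)\subseteq W^{1,p}_{\gamma}(S)\cap L^{\infty}(S)$, and the integrability condition via the preceding Proposition. The only difference is that you spell out the standing hypotheses on $\A$ (Remark \ref{remark_linfest}, Lemmas \ref{lemma_rela} and \ref{lemma_rege}) slightly more explicitly than the paper does, which is harmless.
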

\begin{proof} The claim follows from Proposition \ref{prop_existence} and Theorem \ref{theorem_main}, by choosing $V=L^{1}(S)$, $\mathcal{V}=L^{\infty}(S)$ and $V^{\ast}=W^{1,p}_{\gamma}(S)\cap L^{\infty}(S)$ there. Note that $\mathcal{V}$ is indeed dense in $V$ and invariant w.r.t. $T_{\A}$ and that $V^{\ast}$ separates points. (The denseness is common knowledge, the invariance follows from (\ref{remark_linfesteq}) and the point-separation follows for example from the fact that $C_{c}^{\infty}(S)\subseteq W^{1,p}_{\gamma}(S)\cap L^{\infty}(S)$.)
\end{proof}

\begin{theorem}\label{theorem_plaplacebound} Let $x\in \mathcal{M}(\Omega;L^{1}(S))$ and $\eta \in \mathcal{M}((0,\infty)\times Z \times \Omega;L^{1}(S))$. Then we have
\begin{align*}
||\X_{x,\eta}(t)||_{L^{1}(S)}\leq ||x||_{L^{1}(S)}+\int \limits_{(0,t]\times Z} ||\eta(\tau,z)||_{L^{1}(S)} N_{\Theta}(d\tau\otimes z),~\forall t \geq 0,
\end{align*}
almost surely. Moreover $(x,\eta)$ can be chosen such that
\begin{align*}
||\X_{x,\eta}(t)||_{L^{1}(S)} = ||x||_{L^{1}(S)}+\int \limits_{(0,t]\times Z} ||\eta(\tau,z)||_{L^{1}(S)} N_{\Theta}(d\tau\otimes z),~\forall t \geq 0,
\end{align*}
with probability one.
\end{theorem}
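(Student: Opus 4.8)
The first assertion requires no new work: by the theorem immediately preceding this one, $\X_{x,\eta}$ is the mild solution of (\ref{acprm})$\{x,\eta,W^{1,p}_{\gamma}(S)\cap L^{\infty}(S)\}$, and the constant function $0$ lies in $D(A)$ with $A0=0$ (put $\nabla f=0$ in Definition \ref{definifition_plaplaceop}.iii), so $(0,0)\in A\subseteq\A$; Theorem \ref{theorem_bound} then yields the inequality.

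For the equality, the plan is to produce a non-negative example and exploit the fact that the Neumann $p$-Laplacian semigroup preserves both positivity and total mass. Concretely, I would take $x:=\id_{S}$ and $\eta(t,z,\omega):=\id_{S}$ for all $(t,z,\omega)$, where $\id_{S}$ denotes the function constantly equal to $1$ on $S$; these are measurable, bounded and non-negative. First I would verify that every term $\x_{m}$ of the sequence of jumps generated by $(x,\eta)$ is non-negative a.e. on $S$, almost surely. This follows by induction on $m$ from $\x_{0}=x\geq 0$, from $\eta_{m}\geq 0$, and from the implication ``$u\geq 0$ a.e. $\Rightarrow T_{\A}(t)u\geq 0$ a.e.'': indeed Remark \ref{remark_linfest} gives $T_{\A}(t)u<<u$, and applying this relation to $j(r):=\max(-r,0)\in J_{0}$ yields $\int_{S}(T_{\A}(t)u)^{-}d\lambda\leq\int_{S}u^{-}d\lambda=0$. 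Since $\X_{x,\eta}(t)=T_{\A}((t-\alpha_{m})_{+})\x_{m}$ on $[\alpha_{m},\alpha_{m+1})$ is then $\geq 0$, we obtain $\|\X_{x,\eta}(t)\|_{L^{1}(S)}=\int_{S}\X_{x,\eta}(t)\,d\lambda$ almost surely.

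Next I would establish conservation of mass: $\int_{S}T_{\A}(t)v\,d\lambda=\int_{S}v\,d\lambda$ for every $t\geq 0$ and every $v\in L^{\infty}(S)$. For such $v$, domain invariance (Lemma \ref{lemma_rege}) together with the $L^{\infty}$-estimate (\ref{remark_linfesteq}) gives $T_{\A}(\tau)v\in D(\A)\cap L^{\infty}(S)$ for $\tau>0$, whence $T_{\A}(\tau)v\in D(A)$ and $\A^{\circ}T_{\A}(\tau)v=AT_{\A}(\tau)v$ by Lemma \ref{lemma_rela}; testing Definition \ref{definifition_plaplaceop}.iii with $\varphi=\id_{S}\in W^{1,p}_{\gamma}(S)\cap L^{\infty}(S)$ (for which $\nabla\id_{S}=0$) gives $\langle\id_{S},\A^{\circ}T_{\A}(\tau)v\rangle_{L^{1}(S)}=\int_{S}AT_{\A}(\tau)v\,d\lambda=0$ for all $\tau>0$. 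This function therefore lies in $L^{1}(0,t)$, so Lemma \ref{lemma_intcalcu} applied with $\Psi=\id_{S}$ yields $0=-\langle\id_{S},T_{\A}(t)v-v\rangle_{L^{1}(S)}=-\int_{S}(T_{\A}(t)v-v)\,d\lambda$, which is the claim.

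Finally I would assemble the pieces. On $[\alpha_{m},\alpha_{m+1})$ mass conservation gives $\int_{S}\X_{x,\eta}(t)\,d\lambda=\int_{S}\x_{m}\,d\lambda$, and from $\x_{m}=T_{\A}(\alpha_{m}-\alpha_{m-1})\x_{m-1}+\eta_{m}$ together with mass conservation one gets the recursion $\int_{S}\x_{m}\,d\lambda=\int_{S}x\,d\lambda+\sum_{k=1}^{m}\int_{S}\eta_{k}\,d\lambda$. Since $x$ and each $\eta_{k}$ are non-negative, $\int_{S}x\,d\lambda=\|x\|_{L^{1}(S)}$ and $\int_{S}\eta_{k}\,d\lambda=\|\eta_{k}\|_{L^{1}(S)}$, while Lemma \ref{lemma_intex} (applied to the measurable map $(\tau,z,\omega)\mapsto\|\eta(\tau,z,\omega)\|_{L^{1}(S)}$) identifies $\int_{(0,t]\times Z}\|\eta(\tau,z)\|_{L^{1}(S)}N_{\Theta}(d\tau\otimes z)$ with $\sum_{k=1}^{m}\|\eta_{k}\|_{L^{1}(S)}$ on $[\alpha_{m},\alpha_{m+1})$. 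Chaining these equalities on each interval (the case $m=0$ being trivial, both sides equalling $\|x\|_{L^{1}(S)}$) gives the asserted identity for all $t\geq 0$, almost surely. I expect no genuine obstacle: the only point needing a little care is the mass-conservation step, where one must work with $v\in L^{\infty}(S)$ precisely so that $\A^{\circ}$ reduces to $A$ and the constant test function $\id_{S}$ is admissible in Definition \ref{definifition_plaplaceop}.iii; everything else is routine bookkeeping of the two recursions.
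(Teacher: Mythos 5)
Your proof is correct, but for the equality part it takes a genuinely different route from the paper. The paper picks constant nonnegative data $x=\varphi_{1}$, $\eta=\varphi_{2}$ and invokes the external fact (from \cite[Lemma 4.1]{ich1}) that $T_{\A}(t)\varphi=\varphi$ for constants, so the jumps are explicitly $\x_{m}=\varphi_{1}+m\varphi_{2}$ and the identity is read off directly. You instead derive two structural properties from results inside the paper: positivity preservation of $T_{\A}$ (from $T_{\A}(t)u<<u$ in Remark \ref{remark_linfest} applied to $j(r)=\max(-r,0)\in J_{0}$) and conservation of mass for $v\in L^{\infty}(S)$ (domain invariance plus (\ref{remark_linfesteq}) and Lemma \ref{lemma_rela} to reduce $\A^{\circ}$ to $A$, then testing Definition \ref{definifition_plaplaceop}.iii with the constant function and integrating via Lemma \ref{lemma_intcalcu}), and then run the jump recursion. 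What this buys is generality and self-containedness: your argument shows the equality holds for \emph{every} bounded nonnegative $x$ and $\eta$, not only constants, and it avoids the citation of \cite[Lemma 4.1]{ich1}; the paper's computation is shorter. The only small point to make explicit is that each $\x_{m}$ remains in $L^{\infty}(S)$ (immediate by induction from (\ref{remark_linfesteq}) and $\|\eta_{m}\|_{L^{\infty}(S)}\leq 1$ for your choice of data), since your mass-conservation step is formulated for $v\in L^{\infty}(S)$; with that remark added, the argument is complete.
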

\begin{proof} The former part of the claim follows directly from Theorem \ref{theorem_main}, which is applicable, since one instantly verifies that $(0,0)\in \A$.\\
Now let us prove the latter. To this end,  it will be useful that $T_{\A}(t)\varphi = \varphi$ for all $t\geq 0$ and $\varphi: S \rightarrow \mathbb{R}$ which are constant, cf. \cite[Lemma 4.1]{ich1}.\\
Now, let $\varphi_{1},~\varphi_{2}:\Omega\rightarrow L^{1}(S)$ be such that both of them are for $\P$-a.e. $\omega \in \Omega$ constant, nonnegative functions on $S$ and chose $x:=\varphi_{1}$ and $\eta(t,z,\cdot)=\varphi_{2}$ for all $t>0,~z\in Z$. In addition, let $(\x_{m})_{m \in \mathbb{N}}$ be the sequence generated by $(x,\eta)$. Then a simple induction yields
\begin{align*}
\x_{m}=\varphi_{1}+m\varphi_{2},~\forall m \in \mathbb{N}_{0},
\end{align*}
almost surely. Consequently, we have
\begin{align*}
\X_{x,\eta}(t)= \sum \limits_{m = 0}\limits^{\infty}T_{\A}((t-\alpha_{m})_{+})\x_{m}\id_{[\alpha_{m},\alpha_{m+1})}(t)=\sum \limits_{m = 0}\limits^{\infty}(\varphi_{1}+m\varphi_{2})\id_{[\alpha_{m},\alpha_{m+1})}(t),
\end{align*}
for all $t \geq 0$ almost surely. Moreover, we get
\begin{align*}
||\X_{x,\eta}(t)||_{L^{1}(S)}=\lambda(S)\sum \limits_{m = 0}\limits^{\infty}(\varphi_{1}+m\varphi_{2})\id_{[\alpha_{m},\alpha_{m+1})}(t).
\end{align*}
In addition, it is plain that $||x||_{L^{1}(S)}=\lambda(S)\varphi_{1}$. Now, appealing to Lemma \ref{lemma_intex} yields
\begin{align*}
\int \limits_{(0,t]\times Z} ||\eta(\tau,z)||_{L^{1}(S)} N_{\Theta}(d\tau\otimes z) = \sum \limits_{m =1}\limits^{\infty} \sum \limits_{k =1 }\limits^{m}\lambda(S)\varphi_{2}\id_{[\alpha_{m},\alpha_{m+1})}(t) = \sum \limits_{m =1}\limits^{\infty} m\lambda(S)\varphi_{2}\id_{[\alpha_{m},\alpha_{m+1})}(t),
\end{align*}
for all $t\geq 0$ almost surely. Finally, putting it all together gives
\begin{align*}
	||\X_{x,\eta}(t)||_{L^{1}(S)} = ||x||_{L^{1}(S)}+\int \limits_{(0,t]\times Z} ||\eta(\tau,z)||_{L^{1}(S)}, N_{\Theta}(d\tau\otimes z)
\end{align*}
for all $t \geq 0$, with probability one.
\end{proof}

\newpage 


\end{document}